\newcommand{\nn}{\nonumber}
\def\ML{\MoveEqLeft}
\theoremstyle{plain}
\newtheorem{theorem}{Theorem}[section]%
\newtheorem{corollary}{Corollary}[section]%
\newtheorem{lemma}{Lemma}[section]%
\newtheorem{proposition}{Proposition}[section]%
\newtheorem{remark}{Remark}[section]%
\newcommand{\ignore}[1]{}{}
\numberwithin{equation}{section}
\def\nin{\not\in}
\def\P{\mathds{P}}
\def\P{{ \rm P}}
\def\E{\mathds{E}}
\def\Ex{{\rm E}}
\def\E{\Ex}
\def\Var{{\rm Var}\,}
\def\Cov{{\rm Cov}\,}
\def\I{\mathds{1}}
\def\R{\mathbb{R}}
\def\ss{\mathbb{S}}
\def\N{\mathbb{N}}
\def\F{\mathcal{F}}
\def\X{\mathscr{X}}
\def\Tr{{\rm Tr}}
\def\g{\psi}
\newcommand{\indicator}[1]{\I_{ #1 }}
\newcommand{\IN}[1]{\I_{\{#1\}}}
\newcommand{\ep}[1]{\E\left( #1 \right)}
\newcommand{\epb}[1]{\E\Bigl( #1 \Bigr)}
\newcommand{\conep}[2]{\E\left( #1 \,\middle\vert\, #2  \right)}
\newcommand{\conepb}[2]{\E\Bigl( #1 \,\Bigm\vert\, #2  \Bigr)}
\newcommand{\conepbb}[2]{\E\biggl( #1 \,\biggm\vert\, #2  \bigg)}
\newcommand{\inprod}[1]{\langle #1 \rangle}
\def\N{\mathcal{N}}
\def\x{\mathbf{x}}
\def\graphe{{E}}
\def\graphv{{V}}
\def\nl{\nonumber \\ }
\newcommand{\eq}[1]{$(\ref{#1})$}
\newcommand{\expb}[1]{\exp \left( #1 \right) }
\newcommand{\beq}{\begin{eqnarray*}}
  \newcommand{\eeq}{\end{eqnarray*}}
 \newcommand{\beqn}{\begin{eqnarray}}
 \newcommand{\eeqn}{\end{eqnarray*}}
\begin{document}



\begin{frontmatter}
\title{Berry-Esseen Bounds of Normal and Non-normal Approximation for Unbounded Exchangeable Pairs}
\runtitle{Berry-Esseen Bounds for Exchangeable Pairs}

\begin{aug}
\author{\fnms{Qi-Man} \snm{Shao}\thanksref{t1,m1}\ead[label=e1]{qmshao@cuhk.edu.hk}}
\and
\author{\fnms{Zhuo-Song} \snm{Zhang}\thanksref{m1}\ead[label=e2]{zhuosongzhang@foxmail.com}}

\thankstext{t1}{This research is partially supported by Hong Kong RGC GRF 14302515 and 403513}

\runauthor{Q. M. Shao and Z. S. Zhang}

\affiliation{The Chinese University of Hong Kong\thanksmark{m1}}

\address{Department of Statistics\\
The Chinese University of Hong Kong\\
Shatin, New Territories\\
Hong Kong,  SAR of China
\\
\printead{e1}\\
\phantom{E-mail:\ }\printead*{e2}}

\end{aug}

\begin{abstract}
  An exchangeable pair approach is  commonly taken in the normal and non-normal approximation using Stein's method. It has been successfully used to identify the limiting distribution and provide an error of approximation. However, when the difference of the exchangeable pair is not bounded by a small deterministic constant, the error bound is often not optimal. In this paper, using the exchangeable pair approach of Stein's method,  a new Berry-Esseen bound for an arbitrary random variable is established  without a bound on the difference of the exchangeable pair.  An optimal convergence rate for normal and   non-normal approximation is achieved when the result is applied to
  various examples including the  quadratic forms,  general Curie-Weiss model, mean field Heisenberg model and  colored graph model. 
\end{abstract}

\begin{keyword}[class=MSC]
\kwd[Primary ]{60F05}
\kwd[; secondary ]{60K35}
\end{keyword}

\begin{keyword}
\kwd{Stein's method}
\kwd{exchangeable pairs}
\kwd{Berry-Esseen bound}
\kwd{quadratic forms}
\kwd{simple random sampling}
\kwd{general Curie-Weiss model}
\kwd{mean field Heisenberg model}
\kwd{monochromatic edges}
\end{keyword}

\end{frontmatter}


\section{Introduction}

Let $W_n$ be a sequence of random variables under
study. Using the exchangeable pair approach of
Stein's method,  \citet{ChSh2011} and \citet{ShZh2016}, provided a concrete tool to identify the limiting distribution of $W_n$
as well as the $L_1$ bound (the Wasserstein distance) of the approximation. Our aim in this paper is to establish the Berry-Esseen type bound for the approximation.

Write $W = W_n$ and let $(W, W')$ be an exchangeable pair, that is, $(W,W')$ and $(W',W)$ have the same joint distribution. Put $\Delta = W - W'$.  For the normal approximation, assume that
\begin{equation*}
 \conep{\Delta}{W} = \lambda(W+R). \label{1.1-0}
\end{equation*}
Then, by  \citet{St86} (see also Proposition 2.4 in
\citet{ChSh11}), for any absolutely continuous function $h$ with $\|h'\| < \infty$,
\begin{equation*}
|\E h(W) - \E h(Z)| \leq 2 \|h'\| \Big( \E | 1
-\frac{ 1 }{ 2 \lambda}  \E(\Delta^2 | W)| + { 1 \over \lambda}  \E |\Delta|^3  + \E |R| \Big). \label{1.1-1}
\end{equation*}
Here and in the sequel, $Z$ denotes the standard normal random variable. For the Berry-Esseen bound, we have
\begin{equation}
  \sup_{z\in \R} \Big| P(W\leq z) - \Phi(z) \Big|\leq \E\Big| 1-{1\over 2\lambda} \E ( \Delta^2 | W)  \Big| + \E|R| + \Big({\E|\Delta|^3 \over \lambda}\Big)^{1/2},
   \label{St86}
\end{equation}
where $\Phi$ is the standard normal distribution function.
If in addition $|\Delta| \leq \delta$ for some
constant $\delta$, then by
\citet{rinott1997coupling} (see
also \citet{shao2006berry}),
\begin{equation}   \label{RR97}
\sup_{z\in \R} \Big| P(W\leq z) - \Phi(z) \Big|  \leq
  \E\Big| 1-{1\over 2\lambda} \conep{\Delta^2}{W} \Big| + \E|R| + 1.5 \delta + \delta^3/\lambda.
\end{equation}
It is known that \eqref{St86}  usually fails to provide an optimal bound. Similarly,  the bound in  \eqref{RR97}
may not be optimal unless $\delta$ is small enough. Hence, it would be interesting to seek an optimal Berry-Esseen bound
for an unbounded  $\Delta$. To this end,
\citet{ChenShao12} established the following  Berry-Esseen bound:
\begin{eqnarray}
  \label{chsh12}
  \lefteqn{ \sup_{z\in \R} |P(W\leq z)- \Phi(z)| }\nonumber\\
  & \leq & \E|R| + {1\over 4\lambda} \E(|W| + 1
  )|\Delta^3|  \nonumber\\ && +(1+\tau^2)
  \Big(4(1+\tau)\lambda^{1/2} + 6 \E\Big| {1\over
  2\lambda}\conep{\Delta^2}{W}-1 \Big| + {2\over
  \E[\Lambda]}\E|\Lambda-\E[\Lambda]|  \Big),
\end{eqnarray}
where $\Lambda$ is any random variable such that $\Lambda\geq \conep{\Delta^4}{W}$ and $\tau=\sqrt{\ep{\Lambda } }/\lambda $. They  obtained an optimal Berry-Esseen bound when the result was   applied to an independence tests by sums of squared sample correlation functions. However, \eqref{chsh12} is still  too complicated in general.

For the non-normal approximation, \citet{ChSh2011} developed similar results for both the $L_1$ bound and Berry-Esseen bound.

The exchangeable pair approach of Stein's method has been widely used in the literature. For example,
\citet{chatterjee2007multivariate},
\citet*{reinert2009multivariate} and \citet{Meckes2009Stein} established the
$L_1$ bounds for multivariate normal
approximation, and  \citet{Chat2005} and \citet{chatterjee2010}
obtained the concentration inequalities. We refer
to \citet{ChSh11} and \citet{chatterjee2014} for recent developments on Stein's method.

In this paper, we establish a new Berry-Esseen type bound for normal and non-normal approximation via exchangeable pairs. The bound is as simple as
$$
\E\Big| 1-{1\over 2\lambda} \conep{\Delta^2}{W} \Big| + \E | \E(\Delta |\Delta| \ | \ W)| + \E|R|,
 $$
 which yields an optimal bound in many applications.

The paper is organized as follows. The main
results are presented in Section \ref{main}. Section
 \ref{application} gives applications to the quadratic forms, general Curie-Weiss model, mean field Heisenberg model and  colored graph model. 
 The proof of the main results is given in
 Section \ref{proof}.  Other proofs of applications are postponed to Section  \ref{sec:proof}.


\section{Main results}\label{main}
In this section, we establish   Berry-Esseen bounds for normal and non-normal approximation via the exchangeable pair approach without the boundedness assumption.


\subsection{Normal approximation}

We first present a new  Berry-Esseen bound for normal
approximation,  which is a refinement of \eqref{St86}, \eqref{RR97} and \eqref{chsh12}.

\begin{theorem}\label{thm-1}
  Let $(W,W')$ be an exchangeable pair satisfying
  \begin{eqnarray}
    \conep{\Delta }{W} = \lambda (W+R), \label{con0}
  \end{eqnarray}
  for some constant $\lambda \in (0,1)$ and random variable $R$, where $\Delta = W-W'$.  Then,
  \begin{eqnarray*}
    \lefteqn{ \sup_{z\in \R} | \P(W\leq z) - \Phi(z)|  }\nonumber\\
    \label{res0} & \leq &  \E\Big|1- {1\over 2\lambda} \conep{\Delta^2}{W} \Big|  +  \E|R| +
    { 1 \over \lambda} \E | \E( \Delta \Delta^* |W) | ,
  \end{eqnarray*}
  where $\Delta^*:=\Delta^*(W,W')$ is any random variable satisfying $\Delta^*(W,W')=\Delta^*(W',W)$ and $\Delta^* \geq |\Delta|.$
\end{theorem}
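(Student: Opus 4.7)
The plan is to combine the standard Stein-exchangeable-pair expansion with a new antisymmetric-function identity involving $\Delta^{*}$ that allows the error term to be bounded by $\E|\conep{\Delta\Delta^*}{W}|$ rather than by a third absolute moment.

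First I would take the Stein solution $f_z$ of $f'(w)-wf(w)=\I\{w\leq z\}-\Phi(z)$, which satisfies $\|f_z\|_{\infty}\leq 1$ and $\|f_z'\|_{\infty}\leq 1$, so that $\P(W\leq z)-\Phi(z)=\E[f_z'(W)-Wf_z(W)]$. Since $\Delta(f_z(W)+f_z(W'))$ is antisymmetric in $(W,W')$, its expectation vanishes; combined with \eqref{con0} and the identity $f_z(W)-f_z(W')=\Delta\int_0^1 f_z'(W-s\Delta)\,ds$, this yields
\[
\P(W\leq z)-\Phi(z)=\E\!\left[f_z'(W)\Bigl(1-\frac{\conep{\Delta^2}{W}}{2\lambda}\Bigr)\right]+\E[Rf_z(W)]-\frac{1}{2\lambda}\E\!\left[\Delta^2\!\int_0^1\!(f_z'(W-s\Delta)-f_z'(W))\,ds\right].
\]
The first and second terms on the right are bounded immediately by $\E|1-\conep{\Delta^2}{W}/(2\lambda)|$ and $\E|R|$ using $\|f_z\|_\infty,\|f_z'\|_\infty\leq 1$.

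The crucial new step is bounding the last term, which I denote $T$. The key observation is that, because $\Delta^{*}(W,W')=\Delta^{*}(W',W)$ is symmetric in the pair while $\Delta$ flips sign, the function $\Delta\Delta^{*}(f_z(W)+f_z(W'))$ is antisymmetric, giving
\[
\E[f_z(W)\cdot\conep{\Delta\Delta^*}{W}]=\E[\Delta\Delta^*f_z(W)]=\frac{1}{2}\E\!\left[\Delta^2\Delta^*\!\int_0^1 f_z'(W-s\Delta)\,ds\right].
\]
Combined with $\|f_z\|_\infty\leq 1$, this yields $\bigl|\E[\Delta^2\Delta^*\!\int_0^1 f_z'(W-s\Delta)\,ds]\bigr|\leq 2\,\E|\conep{\Delta\Delta^*}{W}|$. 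I would then bound $T$ by splitting $f_z'(w)=wf_z(w)+\I\{w\leq z\}-\Phi(z)$: for the jump part an explicit computation gives $\Delta^2\int_0^1(\I\{W-s\Delta\leq z\}-\I\{W\leq z\})\,ds=\Delta(|\Delta|-|W-z|)^{+}$, and majorizing $(|\Delta|-|W-z|)^{+}\leq\Delta^{*}$ together with the pair-symmetry of $\Delta^{*}$ converts its contribution into one controlled by $\E|\conep{\Delta\Delta^*}{W}|$; for the smooth part, exchangeability $\E[\Delta^2 g(W)]=\E[\Delta^2 g(W')]$ with $g(w)=wf_z(w)$ lets one rewrite the increment symmetrically in $W,W'$ and absorb the $|W|$-dependent Taylor remainder into the same form.

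The main obstacle is precisely this last conversion. A naive Taylor expansion of the smooth piece produces a term like $\E[(1+|W|)|\Delta|^3]/\lambda$ and a naive bound on the jump piece produces $\E[\Delta^{2}\I\{|W-z|\leq|\Delta|\}]/\lambda$, neither of which is of the desired form $\E|\conep{\Delta\Delta^*}{W}|/\lambda$. Removing these superfluous terms requires carefully combining the exchangeability identities with the domination $|\Delta|\leq\Delta^{*}$ and the pair-symmetry of $\Delta^{*}$, so that the resulting bound emerges with the absolute value \emph{outside} the conditional expectation, which is the structural feature that yields substantial cancellation in applications where $\Delta$ is approximately conditionally symmetric given $W$.
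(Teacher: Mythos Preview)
Your setup and initial decomposition coincide with the paper's, and you have correctly isolated the antisymmetric identity that makes $\Delta^*$ useful: since $\Delta\Delta^*$ is antisymmetric and $h(W)+h(W')$ is symmetric, $\E[\Delta\Delta^*\,h(W)]=\tfrac12\E[\Delta\Delta^*(h(W)-h(W'))]$, and hence $|\E[\Delta\Delta^*\,h(W)]|\le \E|\conep{\Delta\Delta^*}{W}|$ whenever $\|h\|_\infty\le 1$. But you stop at the point you yourself call ``the main obstacle,'' and the route you sketch for the remainder $T$ does not close: majorizing $(|\Delta|-|W-z|)^+\le \Delta^*$ gives only $|T_{\text{jump}}|\le \tfrac{1}{2\lambda}\E[|\Delta|\,\Delta^*]$, with the absolute value \emph{inside}, because $(|\Delta|-|W-z|)^+$ is not symmetric in $(W,W')$ and therefore cannot be paired with $\Delta$ to invoke the antisymmetry trick. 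The smooth part has the same defect.

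The paper's decisive move, which replaces your explicit computation, is to bound the integral by an \emph{endpoint difference} via monotonicity rather than evaluate it. Writing $f_z'(w)=wf_z(w)+\I\{w\le z\}-\Phi(z)$, one shows (Lemma~4.1 of the paper) that $w\mapsto wf_z(w)$ is nondecreasing, while $w\mapsto \I\{w\le z\}$ is nonincreasing; hence, for either piece $h$,
\[
\int_{-\Delta}^{0}\bigl(h(W+t)-h(W)\bigr)\,dt\quad\text{lies between }0\text{ and }\pm\Delta\bigl(h(W)-h(W')\bigr).
\]
Multiplying by $\Delta$ and splitting on $\{\Delta>0\}$ and $\{\Delta<0\}$, the remaining factor has a definite sign on each event, so one may replace the resulting $|\Delta|$ by $\Delta^*$. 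The exchangeability identity $\E[\Delta^*\Delta\,\I\{\Delta<0\}\,h(W')]=-\E[\Delta^*\Delta\,\I\{\Delta>0\}\,h(W)]$ then collapses the two pieces to $\E[\Delta\Delta^*\,h(W)]$, which is bounded by $\E|\conep{\Delta\Delta^*}{W}|$ because $\|wf_z(w)\|_\infty\le 1$ and the indicator is bounded by $1$. The essential structural point is that the endpoint bound introduces the antisymmetric quantity $h(W)-h(W')$; your explicit formula $(|\Delta|-|W-z|)^+$ destroys exactly this $(W,W')$-antisymmetry, and domination by $\Delta^*$ alone cannot restore it.
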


The following two corollaries may be useful.

\begin{corollary}
\label{coro-1}
If $|\Delta| \leq \delta$ and $\E|W| \leq 2$, then
\begin{equation*}
     \sup_{z\in \R} | \P(W\leq z) - \Phi(z)|
     \leq   \E\Big|1- {1\over 2\lambda} \conep{\Delta^2}{W} \Big|  +  \E|R|  + 3 \delta.
  \label{coro-1a}
  \end{equation*}
\end{corollary}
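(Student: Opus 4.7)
The plan is to invoke Theorem \ref{thm-1} with the simplest possible choice of $\Delta^*$, namely the constant $\delta$. Since a constant is trivially symmetric in $(W,W')$ and $\delta\ge|\Delta|$ by hypothesis, this is a legitimate choice. With this $\Delta^*$, the linearity relation $\conep{\Delta}{W}=\lambda(W+R)$ turns the third term in the bound of Theorem \ref{thm-1} into a pure function of $W+R$:
\[
\frac{1}{\lambda}\,\E\bigl|\conep{\delta\,\Delta}{W}\bigr| \;=\; \delta\,\E|W+R| \;\le\; \delta(\E|W|+\E|R|) \;\le\; 2\delta + \delta\,\E|R|,
\]
where the last step uses the standing hypothesis $\E|W|\le 2$.

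The only remaining (minor) obstacle is the stray term $\delta\,\E|R|$, which must be absorbed into the target $3\delta$. I would handle this by a dichotomy on the size of $\E|R|$. If $\E|R|\ge 1$, then the claimed right-hand side $\E|R|+3\delta\ge 1$ already dominates the Kolmogorov distance, which is $\le 1$, so the inequality holds trivially. Otherwise $\E|R|<1$, and consequently $\delta\,\E|R|<\delta$. Combining this case with Theorem \ref{thm-1} yields
\[
\sup_{z\in\R}|\P(W\le z)-\Phi(z)| \;\le\; \E\Bigl|1-\frac{1}{2\lambda}\conep{\Delta^2}{W}\Bigr| + \E|R| + 2\delta + \delta \;=\; \E\Bigl|1-\frac{1}{2\lambda}\conep{\Delta^2}{W}\Bigr| + \E|R| + 3\delta,
\]
which is the desired bound. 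No delicate estimation is involved; the whole corollary is simply a specialization of Theorem \ref{thm-1} obtained by choosing the crudest valid dominator $\Delta^*\equiv\delta$ and then performing this trivial case split to clean up the resulting constants.
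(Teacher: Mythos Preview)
Your proof is correct and is the natural approach: the paper states Corollary~\ref{coro-1} without proof, and the obvious specialization of Theorem~\ref{thm-1} is exactly to take $\Delta^*=\delta$ as you do. The case split on $\E|R|\gtrless 1$ to absorb the leftover $\delta\,\E|R|$ is a clean way to get the stated constant $3$.
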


Notice that the term $\delta^3/\lambda$  in \eqref{RR97} does not appear in the preceding corollary. One can check that under $|\Delta| \leq \delta$,
$$
\min\Big( 1, \E\Big|1- {1\over 2\lambda} \conep{\Delta^2}{W} \Big|  +  \delta\Big)
\leq 2 \min\Big(1, \E\Big|1- {1\over 2\lambda} \conep{\Delta^2}{W} \Big|  +  \delta^3/\lambda\Big).
$$
Hence, \cref{coro-1} is an improvement of \eqref{RR97} at the cost of assuming $\E|W| \leq 2$, which is easily satisfied.

It follows from the Cauchy inequality that for any $a>0$,
$$
|\Delta| \leq  a /2  + \Delta^2/(2a).
$$
Thus, we can choose $\Delta^* = a/2 + \Delta^2/(2a)$ with a proper constant $a$ and obtain the  following corollary.

\begin{corollary}
\label{coro-2}
Assume that $\E|W|\leq 2$. Then, under the condition of Theorem \ref{thm-1},
 \begin{eqnarray*}
    \lefteqn{ \sup_{z\in \R} | \P(W\leq z) - \Phi(z)|  }\nonumber\\
    \label{res01} & \leq &  \E\Big|1- {1\over 2\lambda} \conep{\Delta^2}{W} \Big|  +  \E|R| +
   2 \sqrt{ { \E| \E( \Delta^3 | W)| \over \lambda } }
. \label{coro-2a}
\end{eqnarray*}
 \end{corollary}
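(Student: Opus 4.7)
My plan is to invoke \cref{thm-1} with the symmetric auxiliary variable
\begin{equation*}
  \Delta^* := \frac{a}{2} + \frac{\Delta^2}{2a}
\end{equation*}
for a positive constant $a$ to be chosen later. The symmetry condition $\Delta^*(W,W')=\Delta^*(W',W)$ is automatic since $\Delta^2=(W-W')^2$ is invariant under swapping $W$ and $W'$, and the AM-GM inequality gives $\Delta^* \geq 2\sqrt{(a/2)\cdot(\Delta^2/(2a))} = |\Delta|$, so both hypotheses of \cref{thm-1} are in force.

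The next step is to expand, using the defining relation $\conep{\Delta}{W} = \lambda(W+R)$,
\begin{equation*}
  \conep{\Delta \Delta^*}{W} \;=\; \frac{a\lambda}{2}(W+R) + \frac{1}{2a}\conep{\Delta^3}{W},
\end{equation*}
and then apply the triangle inequality together with $\E|W|\leq 2$ to the factor $\E|W+R|\leq \E|W|+\E|R|\leq 2+\E|R|$, obtaining
\begin{equation*}
  \frac{1}{\lambda}\,\E\bigl|\conep{\Delta\Delta^*}{W}\bigr| \;\leq\; a \,+\, \frac{a}{2}\E|R| \,+\, \frac{1}{2a\lambda}\E\bigl|\conep{\Delta^3}{W}\bigr|.
\end{equation*}

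Finally, I would take $a = \sqrt{\E|\conep{\Delta^3}{W}|/\lambda}$, which nearly minimizes the first and third terms and collapses them to $\tfrac{3}{2}\sqrt{\E|\conep{\Delta^3}{W}|/\lambda}$, while the middle cross term becomes $\tfrac{1}{2}\E|R|\sqrt{\E|\conep{\Delta^3}{W}|/\lambda}$. The main obstacle is precisely this cross term, which is not naturally absorbed by the clean $\E|R|$ already delivered by \cref{thm-1}. I would dispatch it by a case split: if $\E|R|>1$ the right-hand side of the claimed inequality already exceeds one and the bound is trivial (the Kolmogorov distance is at most one); otherwise $\E|R|\leq 1$, the cross term is dominated by $\tfrac{1}{2}\sqrt{\E|\conep{\Delta^3}{W}|/\lambda}$, and the total is at most $2\sqrt{\E|\conep{\Delta^3}{W}|/\lambda}$. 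Combining with the bound supplied by \cref{thm-1} then yields the stated estimate. (The degenerate case $\E|\conep{\Delta^3}{W}|=0$ is handled by letting $a\downarrow 0$.)
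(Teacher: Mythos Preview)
Your proposal is correct and follows exactly the approach the paper sketches just before the corollary: choose $\Delta^* = a/2 + \Delta^2/(2a)$, expand $\conep{\Delta\Delta^*}{W}$ via the linearity condition \eqref{con0}, use $\E|W|\leq 2$, and optimize in $a$. The paper does not spell out the details beyond that hint, and your handling of the cross term $\tfrac{a}{2}\E|R|$ via the case split $\E|R|\lessgtr 1$ (together with the degenerate case $a\downarrow 0$) is a clean way to arrive at the constant~$2$.
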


Clearly, $ \E| \E( \Delta^3 | W)| \leq
\E|\Delta|^3$. Hence,  \cref{coro-2} improves \eqref{St86}. In fact,
 \cref{coro-2} could yield an optimal bound while \eqref{St86} may not.

\ignore{
\begin{remark}
  This result gives a sharp Berry-Esseen bound for exchangeable pairs without
  the assumption that $|W-W'|\leq \delta$ for some constant $\delta.$  For the choice of $\Delta^*,$ we can directly choose it to be $|\Delta|$. For any $\epsilon >0 $, using a famous inequality
  \begin{eqnarray*}
    |x|\leq {\epsilon\over 2} + {x^2 \over 2\epsilon},
  \end{eqnarray*}
  $\Delta^*$ could also be chosen as
  $\epsilon/2 + \Delta^2 /(2\epsilon)$. In this sense, the second
  term of \eqref{res0} can be bounded by
  \begin{eqnarray*}
    \frac2{\lambda}\E\left| \conep{\Delta\Delta^*}{W}\right| \leq \epsilon\E|W-R_n| + {1\over \lambda\epsilon} \E\Big|\conep{\Delta^3}{W} \Big|.
  \end{eqnarray*}
\end{remark}
\begin{remark}
  Suppose $W = f(\xi_1,...,\xi_n)$ for some random variables $\xi_i$'s, and $(W,W')$ is an exchangeable pair. Let $\X$ be the $\sigma$-field generated by $\xi_1,...,\xi_n$. We have
  \begin{eqnarray*}
    \E\Big| 1-\frac1{2\lambda} \conep{\Delta^2}{W} \Big| &\leq & \E\Big| 1-\frac1{2\lambda} \conep{\Delta^2}{\X} \Big|;\\
    \frac2{\lambda}\E\left| \conep{\Delta\Delta^*}{W}\right|& \leq & \frac2{\lambda}\E\left| \conep{\Delta\Delta^*}{\X}\right|.
  \end{eqnarray*}
  The conditional expectation given $\X$ is easier to calculate and analysis in many applications.
\end{remark}
\begin{remark}
  Our result can leads an optimal convergence rate for sum of unbounded i.i.d. random variables. As talked in Introduction, suppose $X=(X_1,X_2,\cdots)$ is a sequence of i.i.d. random variables with zero mean and finite
  variance $\sigma^2$. (Assume $\sigma=1$ without loss of generality.) Let $W
  = {1\over \sqrt{n}} \sum_{i=1}^n X_i$.
  Let $I$ be a random index uniformly distributed in $\left\{ 1,\cdots,n
  \right\}$ and $X'=(X_1',X_2',\cdots)$  a independent copy of
  $X$. Then $(W,W')$ is an exchangeable pair where $W' =
  W - X_I/\sqrt{n} + X_I'/\sqrt{n}$. Observe that
  \begin{eqnarray*}
    \conep{W'-W}{W} = -{1\over n} W
  \end{eqnarray*}
  and $\Delta =( X_I-X_I')/\sqrt{n}$, then
  \begin{eqnarray*}
    \conep{\Delta|\Delta|}{X} &=& {1\over n^2} \sum_{i=1}^n
    \conep{(X_i - X_i')|X_i-X_i'|}{X_i},
  \end{eqnarray*}
  thus
  \begin{eqnarray*}
    \Var\left( \conep{\Delta|\Delta|}{X} \right)& \leq & {1\over n^4}
    \sum_{i=1}^n \ep{ (X_i-X_i')^4 }\\
    & \leq & {C\over n^3}\ep{X_1^4}.
  \end{eqnarray*}
  Therefore, we have that the second term in \eqref{res1} can be bounded by
  \begin{eqnarray*}
    {1\over \lambda} \sqrt{ \Var\left(
    \conep{\Delta|\Delta|}{X} \right)} \leq C n^{-1/2}
  \end{eqnarray*}
  where $C$ is a constant depending on the fourth moment of $X_1$. By \cref{thm-1}, we have
  \begin{eqnarray*}
   \sup_z| \P(W\leq z) - \Phi(z)|\leq C n^{-1/2}
  \end{eqnarray*}
  for some constant $C$.
\end{remark}
}

\subsection{Non-normal approximation}
In this subsection, we focus on the Berry-Esseen bound for non-normal
approximation.

  Let $W$ be a random variable satisfying $\P(a<W<b) =1$ where $-\infty \leq a<b\leq \infty.$ Let $(W,W')$ be an exchangeable pair satisfying 
    \begin{eqnarray}
        \conep{W-W'}{W} = \lambda( g(W) + R),
    \label{con1}
  \end{eqnarray}
where $g$ is a measurable  function with domain $(a,b)$, $\lambda\in (0,1)$ and $R$ is a random variable.

\ignore{Let
$$
G(y) = \int_{w_0}^y g(t) dt,
$$
let $Y$ be a random variable with the probability density function (p.d.f.)
\begin{eqnarray}
  \label{pdf}
  p(y) = c_1 e^{-G(y)},
\end{eqnarray}
where $c_1$ is the  normalizing  constant.
}

Assume that $g$ satisfies the following conditions:
\begin{itemize}
\label{page-con}
  \item[(A1)]  $g$ is non-decreasing, and there exists $w_0 \in (a, b)$ such that $(w-w_0)g(w) \geq 0$ for $w\in (a,b)$;
  \item[(A2)]  $g'$ is continuous and $2(g'(w))^2-g(w)g''(w)\geq 0$ for all $w\in (a,b)$; and
  \item[(A3)]  
      $\lim_{y \downarrow a} g(y) p(y) = \lim_{y \uparrow b} g(y) p(y) =0$,
      where
      \begin{equation}
          \label{pdf}
          p(y ) = c_1 e^{-G(y)}, \quad G(y) = \int_{w_0}^y g(t) dt,
      \end{equation}
      and $c_1$ is the constant so that $\int_a^b p(y) dy = 1$.
\end{itemize}
Let $Y$ be a random variable with the probability density function (p.d.f.) $p(y)$,
and let $\Delta = W -W'$.

  \begin{theorem} \label{thm-2}
 We have
  \begin{eqnarray}
  \lefteqn{\sup_{z\in \R} |  \P(W\leq z) - \P( Y \leq z)  |  }\nonumber\\
  & \leq &  \E\Big| 1-\frac1{2\lambda} \conep{\Delta^2}{W} \Big| + \frac{1}{\lambda}\E\left| \conep{\Delta\Delta^*}{W}\right| + \frac{1}{c_1} \E|R|,
  \label{res1}
\end{eqnarray}
where $\Delta^*:=\Delta^*(W,W')$ is any random variable satisfying $\Delta^*(W,W')=\Delta^*(W',W)$ and $\Delta^* \geq |\Delta|.$
\end{theorem}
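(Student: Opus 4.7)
The plan is to prove Theorem~\ref{thm-2} by Stein's method specialized to the density $p$. For each $z \in \R$, I would take $f_z$ to be the bounded solution to the Stein equation
\[
f_z'(w) - g(w) f_z(w) = \mathds{1}_{\{w \leq z\}} - \P(Y \leq z).
\]
Conditions (A1)--(A3) are the standard hypotheses under which such $f_z$ exists with uniform bounds $\|f_z\|_\infty \leq 1/c_1$ together with suitable bounds on $\|g f_z\|_\infty$ and $\|f_z'\|_\infty$; in particular, the $1/c_1$ factor in front of $\E|R|$ in \eqref{res1} arises precisely from the bound on $\|f_z\|_\infty$. The task then reduces to bounding $\E[f_z'(W) - g(W) f_z(W)]$ uniformly in $z$.

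Next I would exploit exchangeability through the standard antisymmetrization: since $(w, w') \mapsto (w-w')(f_z(w) + f_z(w'))$ is antisymmetric, its expectation vanishes, giving $\E[\Delta f_z(W)] = \tfrac{1}{2}\E[\Delta(f_z(W) - f_z(W'))]$. Combined with \eqref{con1} this rearranges to $\E[g(W)f_z(W)] = \tfrac{1}{2\lambda}\E[\Delta(f_z(W) - f_z(W'))] - \E[R f_z(W)]$. To expose $\E[f_z'(W)]$ I would introduce the concentration kernel
\[
\widehat K(t) = \tfrac{\Delta}{2\lambda}\big(\mathds{1}_{\{-\Delta \leq t \leq 0\}} - \mathds{1}_{\{0 \leq t \leq -\Delta\}}\big),
\]
which is nonnegative with $\int \widehat K(t)\,dt = \Delta^2/(2\lambda)$ and $\int f_z'(W+t)\widehat K(t)\,dt = \tfrac{\Delta}{2\lambda}(f_z(W) - f_z(W'))$. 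This yields the master identity
\[
\E[f_z'(W) - g(W) f_z(W)] = \E\!\Big[f_z'(W)\Big(1 - \tfrac{\conep{\Delta^2}{W}}{2\lambda}\Big)\Big] - \E\!\int\!\big(f_z'(W+t) - f_z'(W)\big)\widehat K(t)\,dt + \E[R f_z(W)],
\]
whose first and third summands are already bounded by $\|f_z'\|_\infty\,\E|1 - \conep{\Delta^2}{W}/(2\lambda)|$ and $(1/c_1)\E|R|$, matching two of the three terms in \eqref{res1}.

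To handle the remaining integral I would invoke the Stein equation once more to split
$f_z'(W+t) - f_z'(W) = [(gf_z)(W+t) - (gf_z)(W)] + [\mathds{1}_{\{W+t \leq z\}} - \mathds{1}_{\{W \leq z\}}]$.
The smooth part is controlled by the Lipschitz estimate on $g f_z$ obtained from (A1)--(A2) and is absorbed into the $\Delta^*$ contribution via $|\Delta| \leq \Delta^*$; the indicator part vanishes off the event $\{z \in [W\wedge W', W\vee W']\}$, while on that event its absolute value is at most $\Delta^2/(2\lambda) \leq |\Delta|\Delta^*/(2\lambda)$. The hard part will be converting the resulting $z$-dependent local quantity $\E[|\Delta|\Delta^*\mathds{1}_{\{z \in [W\wedge W', W\vee W']\}}]/\lambda$ into the $z$-free conditional expectation $\E|\conep{\Delta\Delta^*}{W}|/\lambda$ claimed in \eqref{res1}. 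Here the symmetry hypothesis $\Delta^*(W,W') = \Delta^*(W',W)$ is essential: combined with exchangeability, it allows one to condition on $W$ and pass the pointwise bound through the conditional expectation, after which the $z$-dependence disappears upon taking the supremum. Collecting the three contributions yields \eqref{res1}.
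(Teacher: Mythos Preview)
Your overall architecture is right up through the master identity, and your description of the indicator part is salvageable. The genuine gap is in the ``smooth part'' $(gf_z)(W+t)-(gf_z)(W)$.

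You propose to control it by a Lipschitz estimate on $gf_z$. No such estimate is available under (A1)--(A3): for $g(w)=w^{2k-1}$ the derivative $(gf_z)' = g'f_z+gf_z'$ need not be bounded, and the paper does not prove one. More importantly, even if $gf_z$ were $L$-Lipschitz, you would get
\[
\Big|\tfrac{1}{2\lambda}\E\,\Delta\!\int_{-\Delta}^0\!\big((gf_z)(W+t)-(gf_z)(W)\big)\,dt\Big|
\le \tfrac{L}{4\lambda}\,\E|\Delta|^3,
\]
which is the old Stein bound, not the target $\tfrac{1}{\lambda}\E|\E(\Delta\Delta^*\mid W)|$. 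The whole point of Theorem~\ref{thm-2} is that the latter can be much smaller, so a Lipschitz argument cannot deliver the stated result.

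What the paper actually uses is that $gf_z$ is \emph{non-decreasing} and bounded by $1$ (this is the nontrivial content of Lemma~4.1, and is where (A2) enters). Monotonicity gives one-sided bounds such as
\[
0\ \ge\ \int_{-\Delta}^0\!\big((gf_z)(W+t)-(gf_z)(W)\big)\,dt\ \ge\ -\Delta\big((gf_z)(W)-(gf_z)(W')\big),
\]
so that after multiplying by $\Delta$ the positive contribution sits on $\{\Delta<0\}$ only. Now replace $|\Delta|$ by $\Delta^*$ (valid because the remaining factor has a sign), and use exchangeability together with $\Delta^*(W,W')=\Delta^*(W',W)$ to obtain
\[
\E\big[\Delta^*\Delta\,\I_{\{\Delta<0\}}\big((gf_z)(W)-(gf_z)(W')\big)\big]
=\E\big[\Delta\Delta^*\,(gf_z)(W)\big]\le \E\big|\E(\Delta\Delta^*\mid W)\big|,
\]
since $\|gf_z\|_\infty\le 1$. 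The same mechanism handles the indicator part (with the non-increasing function $-\I_{\{\cdot\le z\}}$ in place of $gf_z$), which is the precise version of the ``hard part'' you allude to. In short: the missing idea is \emph{monotonicity + $\|\cdot\|_\infty\le 1$}, not Lipschitz, and the symmetry of $\Delta^*$ is used to collapse $\I_{\{\Delta<0\}}(h(W)-h(W'))$ into $h(W)$ by exchangeability, after which conditioning on $W$ is legitimate.
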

To make the bound meaningful, one should choose $\lambda \sim (1/2) E(\Delta^2)$.
It is easy to see that $g(w) = w$ satisfies conditions (A1)--(A3). More generally, (A1)--(A3) are also satisfied for
$g(w) = w^{2k-1}$, where $k\geq 1$ is an integer.

\ignore{

Moreover, we present one corollary for the case when $g(w)=c_2 w^{2k-1}$ where $k\geq 1$.
\begin{corollary}
  \label{cor2}
  Let $(W,W')$ be an exchangeable pair satisfying
  \begin{eqnarray}
    \conep{W-W'}{W} = \lambda (2c_2k W^{2k-1}+R),
  \end{eqnarray}
  for some constant $\lambda \in (0,1),c_2> 0, k\geq 1$ and random variable $R$.
  Let $Y$ be a random variable with p.d.f.
  \begin{eqnarray}
    p(y) = c_1 e^{-c_2 y^{2k}}
  \end{eqnarray}
  where $c_1$ is the normalization constant.
  Let $\Delta = W-W'$, then
  \begin{eqnarray}
    \lefteqn{ \sup_{z\in \R} | \P(W\leq z) - F(z)|  }\nonumber\\
    & \leq & 2 \E\Big|1- {1\over 2\lambda} \conep{\Delta^2}{W} \Big| + {2\over \lambda} \E|\conep{\Delta\Delta^*}{W} | + {1\over c_1} \E|R|
  \end{eqnarray}
  where $F(z)$ is the distribution function of $Y$ and  $\Delta^*$ is as defined in \cref{thm1}.
\end{corollary}
\begin{proof}
  Letting $g(w) = 2c_2k w^{2k-1}$, it is sufficient to show that $g(w)$ satisfies conditions A1)-A5) in \cref{thm1}. In fact,
  we have $g(w)$ is increasing on the real line, $wg(w)\geq 0$ for all $w\in \R$, and
  \begin{eqnarray*}
    \lim_{w\rightarrow \pm\infty} g(w) = \pm \infty.
  \end{eqnarray*}
  Moreover,
  observe that
  \begin{eqnarray*}
    g'(w) = 2k(2k-1) c_2 w^{2k-2}, \quad g''(w) = 2k(2k-1)(2k-2)c_2 w^{2k-3},
  \end{eqnarray*}
  then $2(g'(w))^2 \geq g(w)g''(w)$ for all $k\geq 1$.
}

\section{Applications}\label{application}

In this section, we give some applications for our main result.

\subsection{Quadratic forms}
We first consider a classical example as a simple application.
Suppose $X_1,X_2,\cdots$ are i.i.d. random varaibles with a zero mean, unit
variance and a finite fourth moment. Let $A=\left\{ a_{ij} \right\}_{i,j=1}^n$ be
a real symmetric matrix and let
$W_n=\sum_{1\leq i\neq j\leq n}a_{ij}X_iX_j$.
The central limit theorem for $W_n$ has been extensively discussed in the literature.
For example, \citet{dejong87} 
used $U$-statistics  and  proved a central limit theorem for $W_n$ when
$$
\sigma_n^{-4} \Tr(A^4) \to 0  \  \  \mbox{and} \ \ \sigma_n^{-2}\max_{1\leq i\leq n} \sum_{1\leq j\leq n} a_{ij}^2
\to 0,
$$
where $\sigma_n^2 = 2  \Tr(A^2) = \Var(W_n)$.
An $L_1$ bound was given by \citet{chatterjee2008}
while \citet{GoTi2002} gave a Kolmogorov distance with a
convergence rate $\lambda_1/\sigma_n $, where $\lambda_1$ the largest absolute eigenvalue of $A$.

Here,  we apply Theorem \ref{thm-1} and obtain the following result.

\begin{theorem}  \label{thma1}
  Let $X_1,X_2,\cdots$ be i.i.d. random variables with a zero mean, unit variance
  and a finite fourth moment. Let $A=(a_{ij})_{i,j=1}^n$ be a real symmetric
  matrix with $ a_{ii}=0$ for all $ 1\leq i\leq n$ and $\sigma_n^2 = 2 \sum_{i=1}^n\sum_{j=1}^n
  a_{ij}^2.$ Put $W_n ={1\over \sigma_n } \sum_{i\neq j} a_{ij}X_iX_j$. Then,
  \begin{eqnarray}
    \lefteqn{\sup_{x\in \R}\left|  \P(W_n\leq x)
    -\Phi(x)  \right|}\nn\\
    &\leq&  \frac{C \E X_1^4 }{\sigma_n^2 }  \Big( \sqrt{ \sum_{i}\Big(\sum_j
     a_{ij}^2 \Big)^2}+
 \sqrt{\sum_{i,j}\Big(\sum_{k} a_{ik}a_{jk}
 \Big)^2 }    \Big),
    \label{qraform}
  \end{eqnarray}
  where $C$ is an absolute constant.
\end{theorem}

  It is easy to check that
  \begin{align*}
      \sum_{i,j}\Big(\sum_k a_{ik}a_{jk}\Big)^2& =  \Tr(A^4),
   \intertext{and}
   \sum_{i} \Big(\sum_j a_{ij}^2 \Big)^2 & \leq
   \max_{1\leq i\leq n} \sum_{j}a_{ij}^2 \sigma_n^2  \leq
   \lambda_1^2\sigma_n^2,
  \end{align*}
  which means that the first term in (\ref{qraform}) is
  less than the bound $\lambda_1/\sigma_n$ given in
  Theorem 1 of \citet{GoTi2002}.
  However,  comparing it with the $L_1$
 bound
 given in \citet{chatterjee2008}, we conjecture that the bound in \eqref{qraform} can be improved to
 \begin{eqnarray*}
    \sup_{x\in \R}\left|  \P(W_n\leq x) -\Phi(x)  \right|
    \leq C  \left( \frac{1}{\sigma_n^4} \sum_{i}\Big(\sum_j a_{ij}^2 \Big)^2+ \frac{1}{\sigma_n^2} \sqrt{\sum_{i,j}\Big(\sum_{k} a_{ik}a_{jk} \Big)^2 }    \right).
      \end{eqnarray*}


\ignore{
We consider this application as an simple example to illustrate that our method is not only easy to apply but also able to achieve a better result when dealing with independent random variables. All we need to do is just to calculate three terms stated in our main result after  constructing an exchangeable pair.
}

\subsection{General Curie-Weiss model}
The Curie-Weiss model has been extensively discussed in  the statistical physics field. The asymptotic behavior for the Curie-Weiss model was studied by Ellis and Newman \cite{el78a,El78b,El78c}. Recently, Stein's method has been used to obtain the
convergence rate of the Curie-Weiss model. For
example,     \citet{ChSh2011}
 used exchangeable pairs to get a Berry-Esseen bound at the critical temperature  of
the simplest Curie-Weiss model, where the magnetization was valued on $\left\{
-1,1 \right\}$ with equal probability; and
\citet{CFS13} and  \citet{ShZhZh2015} 
established the Cram\'er type  moderate
deviation result for non-critical and critical temperature, respectively.
More generally,  when the magnetization was distributed as a measure $\rho$ with a finite support,
\citet{chatterjee2010} obtained an exponential probability inequality. In this subsection, we apply
 Theorem \ref{thm-1} to establish a Berry-Esseen bound for the  general Curie-Weiss model.

Let $\rho$ be  a probability measure satisfying
\begin{eqnarray}
    \int_{-\infty}^{\infty} x d\rho(x) = 0, \text{ and } \int_{-\infty}^{\infty} x^2 d \rho(x) =1. \label{cwmc0}
\end{eqnarray}
$\rho$ is said to be type $k$ (an integer) with strength $\lambda_\rho$ if
\begin{eqnarray*}
    \int_{-\infty}^\infty x^j d \Phi(x) - \int_{-\infty}^\infty x^j d\rho(x) = \begin{cases}
    0, & \mbox{for } j = 0, 1, ..., 2k-1, \\
    \lambda_\rho >0, & \mbox{for } j = 2k,
  \end{cases}
\end{eqnarray*}
where $\Phi(x)$ is the standard normal distribution function.

We define the Curie-Weiss model as follows. For a given measure $\rho$, let $(X_1,\cdots, X_n)$ have the joint p.d.f.
\begin{eqnarray}
  \label{cwmpdf}
  dP_{n,\beta} (\x) = {1\over Z_n} \exp\Big(
  {\beta(x_1+\cdots + x_n)^2 \over 2n} \Big)
  \prod_{i=1}^n d\rho(x_i),
\end{eqnarray}
where $\x=(x_1,\cdots , x_n)$, $0<\beta\leq 1$ and $Z_n$ is the normalizing constant.

Let $\xi$ be a random variable with probability measure $\rho$.
Moreover, assume that
\begin{enumerate}[(i)]
    \item for $0< \beta < 1$, there exists a
constant $b > \beta$ such that
\begin{eqnarray}\label{c1-00}
    \E e^{t \xi } \leq e^{t^2 \over 2b}, \quad \text{for } -\infty< t < \infty;
\end{eqnarray}

\item for $\beta=1$,  there exist constants $b_0>0, b_1 >0$ and
$b_2 >1$ such that
\begin{eqnarray}
  \label{c1-01}
  \E e^{t\xi} \leq
  \begin{cases}
    \expb{ t^2/2 - b_1 t^{2k} }, & |t| \le  b_0,\\
    \expb{ \frac{t^2}{2b_2} } , & |t| > b_0.
  \end{cases}
\end{eqnarray}
\end{enumerate}
Let $S_n = X_1+\cdots + X_n$.   \citet{El78b,El78c}  showed that
\begin{itemize}
  \item[(i)]  if $0 < \beta <1$, then $n^{-1/2} S_n$ converges to a normal distribution
    $\N(0,(1-\beta)^{-1})$; and
  \item[(ii)]   if $\beta =1$, and $\rho$ is of
      type $k$, then $n^{-1+{1\over 2k}} S_n$ converges to a non-normal distribution with p.d.f.
      \begin{eqnarray*}
          p(y) = c_1 e^{-c_2 y^{2k}},
      \end{eqnarray*}
      where $c_2>0$ and $c_1$ is the normalizing constant.
\end{itemize}

The following theorem provides the rate of convergence.

\begin{theorem}
  \label{cwmthm1}
  Let $(X_1,\cdots, X_n)$ follow the joint p.d.f.
  \eqref{cwmpdf},  where $\rho$ satisfies
  \eqref{cwmc0}.

  \begin{itemize}
    \item[(i)]   If $0< \beta <1$ and
      \eqref{c1-00} is satisfied, then for $W_n = n^{-1/2} S_n$, we have
      \begin{eqnarray}\label{cwmt1-1}
    \sup_{z\in \R} |\P(W_n\leq z) - \P(Y_1\leq
    z)| \leq C n^{-1/2}.
  \end{eqnarray}
  where $Y_1\sim \N(0,{1\over 1-\beta})$ and $C$
  is a constant depending on $b$ and $\beta.$

\item[(ii)] If  $\beta =1$, $\rho$ is of type
    $k$   and \eqref{c1-01} is
  satisfied,
  then   for $W_n= n^{-1+ \frac{1}{2k}}S_n$, we have
  \begin{eqnarray}\label{cwmt1-2}
    \sup_{z\in \R} | \P(W_n\leq z) - \P(Y_k \leq
    z)| \leq C n^{-{1\over 2k}},
  \end{eqnarray}
  where $C$ is a constant depending on
  $b_0,b_1,b_2$ and $k$; the density function of $Y_k$ is given by
  \begin{eqnarray*}
      p(y) = c_1 e^{-c_2 y^{2k}}, c_2 = {H^{(2k)}(0) \over (2k)!};
  \end{eqnarray*}
  and $c_1$ is the normalizing constant and $H(s) = s^2/2 - \ln ( \int_{-\infty}^\infty \exp(sx) d \rho(x))$.
  \end{itemize}
\end{theorem}

\subsection{Mean field Heisenberg model}
The Heisenberg model is a statistical model for the phenomena of ferromagnetism and antiferromagnetism in the study of magnetism theory.

Let $G_n$ be a finite complete graph with $n$ vertices. At each site of the graph is a spin in $\ss^2$, so the state space is $\Omega_n = (\ss^2)^n$ with $P_n$ the $n$-fold product of the uniform probability measure on $\ss^2$. The mean field Hamiltonian energy of the Heisenberg model  $H_n:\Omega_n \mapsto \R$ is
\begin{eqnarray*}
  H_n(\sigma) = -{1\over 2n} \sum_{1\leq i,j\leq n} \inprod{\sigma_i,\sigma_j},
\end{eqnarray*}
where $\inprod{\cdot,\cdot}$ is the inner product in $\R^3.$
The Gibbs measure $P_{n,\beta}$ is given by the density function
\begin{eqnarray*}
  dP_{n,\beta} = {1\over Z_{n,\beta}} \exp\Big( {\beta\over 2n} \sum_{1\leq i,j\leq n} \inprod{\sigma_i,\sigma_j} \Big)={1\over Z_{n,\beta}}\exp(-\beta H_n(\sigma)),
\end{eqnarray*}
where $Z_{n,\beta} = \int_{\Omega_n} \exp(-\beta H_n (\sigma)) dP_{n}$.

Consider the random variable
\begin{eqnarray}\label{def:W}
  W_n = \sqrt{n}\Big( {\beta^2\over n^2 \kappa^2} \Big|\sum_{j=1}^n \sigma_j \Big|^2 -1 \Big),
\end{eqnarray}
where $|\cdot|$ is the Euclidean norm in $\R^3$ and $\kappa$ is the solution to the equation
\begin{eqnarray}\label{fun:g}
  x/\beta= (\coth(x) -1/x). 
\end{eqnarray}

Let $\g(x) = \coth(x) - 1/x$ and
 \begin{eqnarray}
  B^2 = {4\beta^2 \over (1-\beta \g'(\kappa))\kappa^2} \Big({1\over \kappa^2} - {1\over \sinh^2(\kappa)} \Big).
  \label{def:B}
\end{eqnarray}
\citet{KirMec2013} showed  that when $\beta>3$,
$W_n/B$ converges to a standard normal distribution with an $L_1$ bound $O(\log(n) n^{-1/4})$.
They also showed that
when $\beta =3$,
the random variable $T_n = c_3n^{-3/2} |\sum_j \sigma_j|^2$, where $c_3$ is a constant such that the variance of $T_n$ is $1$,
converges in distribution to  $Y$ with the density function
\begin{eqnarray*}
  p(y) = \begin{cases}
      C y^5 e^{-3y^2/(5c_3)}, & y\geq0, \\
      0, & y<0,
  \end{cases}
\end{eqnarray*}
where $C$ is the normalizing constant.
\ignore{ They also obtained the convergence rate that
\begin{eqnarray*}
  \sup_{\|h''\|_{\infty} \leq 1} \big|\ep{h(T_n)}- \ep{h(Y)}\big| \leq C  \log( n) n^{-1/2}
\end{eqnarray*}
where $C$ is a universal constant.
}

The following theorem gives a Berry-Esseen bound for the case $\beta >3$. The case $\beta=3$ will be studied in another paper.

\begin{theorem}\label{thmhm}
    Let  $W_n$ be the random variable defined as in \eqref{def:W} and $B$ as  in \eqref{def:B} with $\beta >3$. Then, we have   \begin{eqnarray}\label{res:hm}
      \sup_{z\in \R}|\P(W_n/B\leq z) - \Phi(z)|\leq c_{\beta} n^{-1/2},
  \end{eqnarray}
  where $c_{\beta}$ is a constant depending on $\beta.$
\end{theorem}


\subsection{Counting monochromatic edges in uniformly colored graphs}
The study of monochromatic and heterochromatic subgraphs of an edge-colored graph
 dates back to the 1960s, and the last two decades has witnessed a significant development in the study of normal and Poisson approximation.
 \citet*{barbour1992poisson}
 used Stein's
method to show that the number of monochromatic edges for the complete graph converges to
a Poisson distribution.  
\citet*{arratia1990poisson}
applied Stein's method 
to prove a Poisson approximation theorem for the number of monochromatic cliques in a uniform
coloring of the complete graph. We refer to
\citet*{chatterjee2005exchangeable} and \citet{cerquetti2006poisson}
for other related results.

In this subsection, we consider normal approximation for the counting of monochromatic edges in uniformly colored graphs.
Let $G=\left\{ \graphv(G), \graphe(G) \right\}$ be a simple undirected graph, where
$\graphv(G)=\left\{ v_1,\cdots,v_n \right\}$ is
the vertex set and $\graphe(G)$ is the
edge set. For $1 \leq i \leq n$, let
$$A_i = \left\{1 \leq  j\leq n, j\neq i,  (v_i,v_j) \in
\graphe(G) \right\}
$$ be the
neighbourhood of  index $i$ and $d_i = \#(A_i)$ be the number of edges connected
to $v_i$. Denote   the total number of edges in $G$ by $m_n$, which is equal to $\sum_{i=1}^n d_i/2$.
Each vertex  is colored independently and uniformly with
$c_n \geq 2$ colors, denoted by $\xi_i$ the color of $v_i$. Let
$Y_n$ be the number of monochromatic edges in
$G_n$. \citet{rinott1996} proved
the  central limit theorem for $Y_n$ while
\citet{Fang2015} obtained
the Wasserstein distance with an order of $\sqrt{c_n/m_n}+c_n^{-1/2} $. The following theorem provides  a Berry-Esseen bound.
\ignore{
Let $\left\{ G_n, n\geq 1 \right\}$ be a sequence of simple graphs with
vertices $\left\{ v_1,\cdots,v_n \right\}$, and for every $1\leq i\leq n$,
there are $d_i$ edges connected to $v_i$. Thus in $\left\{ G_n \right\}$,
there
are in total $m_n:={1\over 2}\sum_{i=1}^n
d_i$ edges. Also, each edge is colored independently and uniformly with
$c_n$ colors. Let $Y_n$ be the number of monochromatic edges in $G_n$. Rinott and Rotar (1996) proved that
the  central limit theorem for $Y_n$ while
\citet{Fang2015} obtained
the Wasserstein distance with an order of $\sqrt{c_n/m_n}+c_n^{-1/2} $. The following theorem provides  a Berry-Esseen bound.

Let $G=\left\{ V(G), E(G) \right\}$ be a simple undirected graph, where
$V(G)=\left\{ v_1,\cdots,v_n \right\}$ is the vertex set and $E(G)$ is the
edge set. Let
$A_i = \left\{ j\leq n, j\neq i,  (v_i,v_j) \in E(G) \right\}, 1\leq i\leq
n$ be the
neighbour index of $i$ and $d_i = \#(A_i)$ be the number of edges connected
to
$v_i$. Define $m=\sum_{i=1}^n d_i/2$, the total number of edges in $G$. We
color each vertex of $G$ independently and uniforly at random with $c>2$
colors and denote by $\xi_i$ the color of $v_i$. Using those notations, we
have
the following theorem.
}

\begin{theorem}
  \label{thm:me}
  Let
  \begin{eqnarray*}
  W_n={1\over 2} \sum_{i=1}^n \sum_{j\in A_i} {\IN{\xi_i = \xi_j} -{1\over c_n}
  \over \sqrt{ {m_n\over c_n}(1-{1\over c_n}) }}. \label{meW}
\end{eqnarray*}
Then,
\begin{eqnarray*}
  \sup_{z\in \R} \left|  \P(W_n\in z) - \Phi(z)  \right| & \leq &  C
  (\sqrt{1/c_n} + \sqrt{d_n^*/m_n}+\sqrt{c_n/m_n}),
  \label{res:me}
\end{eqnarray*}
where $C$ is an absolute constant  and  $d_n^*=\max\left\{ d_i, 1\leq i\leq n
\right\}.$
\end{theorem}
\section{Proof of main results} \label{proof}

As the normal approximation is a special case of the non-normal approximation, we prove Theorem \ref{thm-2} only. The only difference for the normal approximation is that the Stein's solution can be bounded by $1$ instead of $\sqrt{2 \pi}$.

Let $Y$ be the random variable  with the p.d.f. $p(y)$ defined in \eq{pdf}. For a given $z$, let $f:=f_z$ be the solution to the  following Stein equation:
\begin{equation}
    f'(w)-g(w)f(w) = \indicator{ \left\{ w\leq z \right\}} - F(z), \quad z\in (a,b)\label{eq1},
\end{equation}
where $F$ is the distribution function of $Y$. It
is known (see, e.g., \citet{ChSh2011})
that
\begin{equation}
      f_z(w) = \left\{ \begin{array}{ll}
              {F(w)(1-F(z))\over p(w)}, \ \ & w\leq z,\\
         {F(z)(1-F(w)) \over p(w)}, & w>z.
      \end{array}
      \right.
      \label{fz}
      \end{equation}
We first prove some basic properties of $f_z$.

\begin{lemma} \label{l4.1}
    Suppose that conditions (A1)--(A3) are satisfied.  Then,
\begin{gather}
0 \leq f_z(w) \leq 1/c_1,
\label{l4.1-1} \\
\|f_z'\| \leq 1,
\label{l4.1-2}\\
    \|g f_z\| \leq 1 , \label{l4.1-3}
\intertext{and}
\label{l4.1-4}
    g(w) f_z(w) \text{ is non-decreasing.}
\end{gather}
\end{lemma}

We remark that when $g(w) =w$, i.e., for the  normal approximation, it is known that $ 0 \leq f_z(w) \leq 1$ (see, e.g., Lemma 2.3 in \citet{ChSh11}).

\begin{proof}
    \ignore{Without loss of generality, we assume that $a<0<b$ and  $w_0 = 0$; so, $p(0) = c_1$.
  We first prove \eqref{l4.1-1}.
  For $w\leq z$, define $H(w) = F(w)(1-F(z))-p(w)/c_1$. To prove \cref{l4.1-1}, noting that $f_z(w) \geq 0$, it suffices to show $\sup_{a < w < b} H_z(w) \leq 0$.  As $g(w)$ is non-decreasing, there is at most one solution to
  the equation $1-F(z)+g(w)/c_1 =0$ and hence $ \sup_{a < w \leq z} H(w) = \max( H(a) , H(z))$. Clearly,
  $H(a) = - p(a)/c_1\leq 0$ and  $H(z) =F(z)(1-F(z)) - p(z)/c_1  \leq   1- F(z) - p(z)/c_1$.
  If $z\leq 0$, then $H(z) \leq H(0)\leq 1 - p(0)/c_1 =0$; if $z>0$, consider $H_1(z) = 1-F(z) - p(z)/c_1$; then,  using a similar argument, $\sup_{0 < z < b} H_1(z) = \max( H_1(0), H_1(b)) \leq 0$. This proves
  $f_z(w) \leq 1/c_1$ for $w \le z$.
  Similarly, for $w>z$, we also have $f_z(w)\leq 1/c_1.$}

  Without loss of generality, we assume that $a < 0 < b$ and $w_0 = 0$; thus, $p(0) = c_1$.
For $w \leq z$, define $H_z(w) = F(w) (1 - F(z)) - p(w) / c_1$. 
To prove \eqref{l4.1-1}, noting that $f_z(w) \geq 0$,  it suffices to show that $\sup_{a < w < b} H_z(w) \leq 0$.
As $g(w)$ is non-decreasing, by the fact that  $H_z'(w) = p(w) ( 1 - F(z) + g(w) / c_1)$, 
\[
    \sup_{a < w \leq z} H_z(w) = \max \{ H_z(a), H_z(z) \}.
    \]
Clearly, $H_z(a) = - p(a)/c_1 \leq 0$. Now we prove $\sup_{a < z < b} H_z(z) \leq 0$. If $z \leq 0$, define $H_1(z) = F(z) - p(z)/c_1$ and thus $H_1'(z) = p(z) (1 + g(z)/c_1)$. Note that $g(z)\leq 0$ and $g(\cdot)$ is non-decreasing, then,
\[
    \sup_{a < z \leq 0} H_z(z) \leq \sup_{a < z \leq 0} H_1 (z) \leq \max \{ H_1 (a), H_1(0) \}  \leq 0.  
\]
Using a similar argument, we also have $\sup_{0 \leq z < b} H_z(z) \leq 0$. Therefore, $\sup_{a < z < b} H_z(z) \le 0$. This proves $\sup_{a < w \leq z} f_z(w) \leq 1/c_1$.
Similarly, we have $\sup_{z < w < b } f_z(w) \leq 1/c_1$.

A similar argument can be made for $w > z.$
This completes the proof of \eqref{l4.1-1}.

  We next show that $g f_z$ is non-decreasing.
  For $w\leq z$, by \eqref{fz},
  \begin{eqnarray*}
    g(w)f_z(w) = {g(w) F(w) (1-F(z)) \over p(w)},
  \end{eqnarray*}
  and thus,
  \begin{eqnarray*}
    (g(w)f_z(w))' = (1-F(z))\big( g(w) + (g'(w)+ g^2(w)) F(w)/ p(w) \big).
  \end{eqnarray*}
  Let $\tau(w) = {g(w) e^{-G(w)} \over g'(w) + g^2(w)}$. Then, by (A2),
  \begin{eqnarray*}
      -\tau'(w) e^{G(w)} = 1 - \Big( {2(g'(w))^2 - g''(w)g(w) \over (g'(w) + g^2(w))^2} \Big) \leq 1.
  \end{eqnarray*}
  Hence,
  \begin{eqnarray*}
      e^{-G(w) } +  \tau'(w)  \geq 0
  \end{eqnarray*}
  and
  \begin{eqnarray*}
      0\le \int_a^w (\tau'(t) + e^{-G(t)} ) dt = \tau(w) +{1\over c_1} F(w) - \lim_{y \downarrow  a}\tau(y) .
  \end{eqnarray*}
  By condition (A3), $\lim_{y \downarrow  a}\tau(y)  = 0$ and hence  $\tau(w) +{1\over c_1} F(w)\geq 0$. This proves that
$(g(w)f_z(w))'\geq 0$ or $g(w)f_z(w)$ is non-decreasing for $w\leq z$. Similarly, one can prove that $g(w)f_z(w)$ is non-decreasing for $w\ge z$.
This proves \eqref{l4.1-4}.

  To prove (\ref{l4.1-3}),  by (A1), we have for $w \geq \max(z, 0)$,
  \begin{eqnarray*}
    g(w)f_z(w) & = & {F(z) g(w) \int_w^b p(t) dt \over p(w)} \nn \\
    & \leq & {F(z) \int_w^b e^{-G(t)}g(t) dt \over e^{-G(w)}} \leq F(z).
      \end{eqnarray*}
     \ignore{ For $0 \leq w \leq z$, we have $g(w) \geq 0$, then,  by \eqref{l4.1-4},
      \begin{align*}
          0 \leq  g(w) f_z(w) \leq g(z) f_z(z) \leq 1.
      \end{align*}}
      Similarly, we have $g(w)f_z(w)\geq -(1-F(z))$ for $w\leq \min(0, z) $.
      Combining with \eqref{l4.1-4} yields
      \begin{equation}
      F(z) - 1 \leq g(w) f_z(w) \leq F(z) \label{l4.5-1}
      \end{equation}
      for all $w$. This proves (\ref{l4.1-3}).

  The inequality \eqref{l4.1-2} follows immediately from \eqref{eq1} and \eqref{l4.5-1}.

\end{proof}

\begin{proof}[ Proof of \cref{thm-2}] Let $f=f_z$
be the solution to the Stein equation \eq{eq1}.
Since $(W,W')$ is an exchangeable pair,  by \eqref{con1}, we have
\begin{align*}
    0 & = \ep{(W - W' ) (f(W) + f(W')) } \nl
      & = 2 \ep{(W - W' ) f(W)} - \ep{(W - W') (f(W ) - f(W'))}\nl
      & = 2 \lambda \ep{g(W)f(W)} + 2\lambda \ep{R f(W)} - \E \Big( \Delta \int_{-\Delta}^{0} f'(W + t) dt \Big),
\end{align*}
and hence, 
\begin{align*}
    \ep{g(W)f(W)} = \frac{1}{2\lambda} \E \Bigl( \Delta \int_{-\Delta}^0 f'(W + t) dt  \Bigr) - \ep{R f(W)}.
\end{align*}
Thus,
\begin{eqnarray*}
\lefteqn{
\E (f'(W) - g(W) f(W)) } \\
& =& \E\Big (f'(W)\big( 1- { 1 \over 2\lambda} \E(\Delta^2 | W) \big)\Big) -
{ 1 \over 2 \lambda} \E \Big( \Delta \int_{- \Delta}^0 ( f'(W+t) - f'(W)) dt \Big) + \E( R f(W)).
\end{eqnarray*}
By \eqref{eq1}, \eq{l4.1-1} and \eq{l4.1-2},
\begin{eqnarray}
\lefteqn{
|\P( W \leq z) - \P(Y \leq z)| = | \E (f'(W) - g(W) f(W))| } \label{thm-2-1} \\
& \leq &  |I_1|  + 2 \E \Big|  1-{1\over 2\lambda} \conep{\Delta^2}{W}  \Big|  +
  {1\over c_1} \E|R|,\nn
\end{eqnarray}
where
$$
I_1 = {1\over 2\lambda} \epb{\Delta\int_{-\Delta}^0
  (f'(W+t) - f'(W) ) dt}   .
  $$
  Recalling  that $f$ is the solution to \eqref{eq1}, we have
  \begin{eqnarray}
  I_1 &=& {1\over 2\lambda} \epb{\Delta\int_{-\Delta}^0
  (g(W+t)f(W+t) - g(W)f(W) ) dt}  \label{thm-2-2}\\
  && + {1\over 2\lambda} \epb{\Delta\int_{-\Delta}^0
  (\indicator{ \left\{ W + t\leq z \right\}} - \indicator{ \left\{ W \leq z
  \right\}}) dt} . \nn
\end{eqnarray}
Noting that  $g(w)f(w)$ is non-decreasing by Lemma \ref{l4.1} and that the indicator function $\indicator{\{w \leq z\}}$ is non-increasing, we have
\beq
0  & \geq  &   \int_{-\Delta}^0 (g(W+t)f(W+t) - g(W)f(W)) dt \\
& \geq  & - \Delta   ( g(W)f(W) - g(W-\Delta) f(W-\Delta))
\eeq
and
$$
0  \leq   \int_{-\Delta}^0
  (\indicator{ \left\{ W + t\leq z \right\}} - \indicator{ \left\{ W \leq z
  \right\}}) dt
   \leq  \Delta  \Big(\indicator{ \left\{ W - \Delta \leq z \right\}} - \indicator{ \left\{ W \leq z   \right\} } \Big).
$$
  Therefore
  \begin{eqnarray}
  I_1 & \leq & {  1 \over 2 \lambda} E\Big(-  \Delta \indicator{ \{ \Delta <0\}}
 \Delta   ( g(W)f(W) - g(W-\Delta) f(W-\Delta)) \Big) \nn \\
 && + { 1 \over 2 \lambda} E\Big( \Delta \indicator{ \{ \Delta > 0\}}\Delta  \Big(\indicator{ \left\{ W - \Delta \leq z \right\}} - \indicator{ \left\{ W \leq z   \right\} } \Big)\Big).
 \label{thm-2-3}
\end{eqnarray}
Thus, for any $\Delta^*=\Delta^*(W, W')= \Delta^*(W',W) \geq |\Delta|$
\beqn
\lefteqn{
{  1 \over 2 \lambda} E\Big(-  \Delta \indicator{ \{ \Delta <0\}}
 \Delta   ( g(W)f(W) - g(W-\Delta) f(W-\Delta)) \Big) } \nn \\
 & \leq & {  1 \over 2 \lambda} E\Big( \Delta^* \indicator{ \{ \Delta <0\}}
 \Delta   ( g(W)f(W) - g(W') f(W') ) \Big) \nn \\
 & = & {  1 \over 2 \lambda} E\Big( \Delta^* \Delta ( \indicator{ \{ \Delta <0\}}
 + \indicator{ \{ \Delta > 0\}} ) g(W)f(W) \Big) \nn \\
 & =& {  1 \over 2 \lambda} E\Big( \Delta \, \Delta^*  g(W)f(W) \Big) \nn \\
& \leq & \frac{1}{ 2 \lambda} \E\left|  \conep{\Delta\Delta^*}{W}  \right|, \label{thm-2-3a}
\end{eqnarray}
where $\E( \Delta^* \Delta  \indicator{ \{ \Delta <0\}} g(W') f(W'))
= - \E( \Delta^* \Delta  \indicator{ \{ \Delta > 0\}} g(W) f(W))
$ because of the exchangeability of $W$ and $W'$ and  $|g(w)f(w)|\leq 1$ for all $w\in \R$.
Similarly, we have
\begin{equation}
{ 1 \over 2 \lambda} E\Big( \Delta \indicator{ \{ \Delta > 0\}}\Delta  \Big(\indicator{ \left\{ W - \Delta \leq z \right\}} - \indicator{ \left\{ W \leq z   \right\} } \Big)
\leq  \frac{1}{ 2 \lambda} \E\left|  \conep{\Delta\Delta^*}{W}  \right|. \label{thm-2-3b}
\end{equation}
Combining  \eq{thm-2-3}, \eq{thm-2-3a} and \eq{thm-2-3b} yields
\begin{equation}
I_1 \leq \frac{1}{  \lambda} \E\left|  \conep{\Delta\Delta^*}{W}  \right|.
\label{thm-2-3c}
\end{equation}

Following the same argument, we also have
\begin{equation}
I_1 \geq - \frac{1}{  \lambda} \E\left|  \conep{\Delta\Delta^*}{W}  \right|.
\label{thm-2-3d}
\end{equation}

This proves \eq{res1}, by \eq{thm-2-1}, \eq{thm-2-3c} and \eq{thm-2-3d}.
\end{proof}

\ignore{
$$
is non-positive and moreover,
\begin{eqnarray*}
\lefteqn{\left|  \int_{-\Delta}^0 (g(W+t)f(W+t) - g(W)f(W)) dt  \right|} \\ & \leq &  \Delta
  (g(W)f(W) - g(W-\Delta) f(W-\Delta)).
\end{eqnarray*}
Therefore, for any $\Delta^*=\Delta^*(W, W')= \Delta^*(W',W) \geq |\Delta|$,
\begin{eqnarray}
|I_{1,1}| & \leq &
 \frac{1}{2\lambda}\ep{|\Delta| \Delta (g(W)f(W) - g(W')f(W'))} \label{thm-2-3}\\
& \leq & \frac{1}{2\lambda}\ep{\Delta^*\Delta (g(W)f(W) - g(W')f(W'))}\nn \\
  &=& \frac{1}{\lambda}\ep{g(W)f(W)\Delta
  \Delta^* }\nn \\
  & \leq & \frac{1}{\lambda} \E\left|  \conep{\Delta\Delta^*}{W}  \right|, \nn
\end{eqnarray}
where $\E( \Delta \Delta^* g(W')f(W') ) = - \E( \Delta \Delta^* g(W)f(W) )$
because of the exchangeability of $W$ and $W'$ and  $|g(w)f(w)|\leq 1$ for all $w\in \R$.

As for $I_{1,2}$, the indicator  function $\indicator{\{t \leq z\}}$ is non-increasing. Following the same argument, we also have
\begin{equation}
|I_{1,2} | \leq \frac{1}{\lambda} \E\left|  \conep{\Delta\Delta^*}{W}  \right|. \label{thm-2-4}
\end{equation}

This proves \eq{res1}, by \eq{thm-2-1}--\eq{thm-2-4}.
\end{proof}

}

\section{Proofs of Theorems \ref{thma1}--\ref{thm:me}}
\label{sec:proof}

In this section, we give proofs for the  theorems in \cref{application}. The construction of an exchangeable pair is  described as follows.

Let $\eta_1, \cdots, \eta_n$ be a sequence of random variables and $W=h(\eta_1, \cdots, \eta_n)$.
For each $1 \leq i \leq n$, let $\eta_i'$ have the  conditional distribution of $\eta_i$  given $\{\eta_j, 1 \leq j \leq n,  j \not = i\}$, also, $\eta_i'$ is conditionally independent of $\eta_i$ given $ \left\{ \eta_j, 1 \leq j \leq n , j \neq i \right\}$.
 Let  $I$ be a random
 index uniformly distributed over $\{1, \cdots, n\}$ independent of $ \left\{ \eta_i, \eta_i', 1 \leq i \leq n \right\}$. Set
$$ W'= h(\eta_1, \cdots, \eta_{I-1}, \eta_I' , \eta_{I+1}, \cdots, \eta_n). $$ Then, $(W, W')$ is an exchangeable pair. In particular, when
$\eta_i, 1 \leq i \leq n$ are independent, one can let $\{\eta'_i, 1 \leq i \leq n \}$ be an independent copy of $\{\eta_i, 1 \leq i \leq n\}$.
This sampling procedure is also called the Gibbs sampler.

\subsection{Proof of \cref{thma1}}
Let $\X=\sigma(X_1,\cdots,X_n)$, and $(X_1',X_2',\cdots, X_n')$ be an  independent copy
  of $(X_1,X_2,\cdots, X_n)$. Let $I$ be a random index uniformly distributed over
  $\left\{ 1,\cdots,n \right\}$ independent of any other random variable.
  Write $W_n=h(X_1,\cdots,X_n)$ and define $W_n' = h(X_1,\cdots,X_I', \cdots, X_n)$.
  Then,
 $(W_n,W_n')$ is an exchangeable pair.
 It is easy to see that
 \begin{equation*}
     \Delta = W_n - W_n' = \frac{2}{\sigma_n} \sum_{j\neq I} a_{jI}X_j (X_I - X_I'),  \label{thm3.1-1}
  \end{equation*}
and
  \begin{eqnarray*}
      \conep{\Delta}{\X} &=& \frac{2}{\sigma_n} \sum_{i=1}^n \sum_{j\neq i}
    \conep{a_{ji}X_j (X_i-X_i')}{\X} \label{thm3.1-2}\\
    &=& {2\over n} W_n. \nn
  \end{eqnarray*}
  As such,  condition \eqref{con0} holds with  $\lambda = 2/n$ and $R=0$.
  Also,
  \begin{eqnarray*}
      \conep{\Delta^2}{\X} &=& \frac{4 }{ n \sigma_n^2} \sum_{i=1}^n \E\biggl(\Bigl(    \sum_{j\neq i} a_{ji} X_j (X_i-X_i') \Bigr)^2 \biggm| {\X} \biggr)\\
                           &=& \frac{4 }{n \sigma_n^2} \sum_{i=1}^n (X_i^2 + 1)\Bigl(  \sum_{j=1}^n a_{ij}X_j
    \Bigr)^2,
  \end{eqnarray*}
  and
  \begin{eqnarray*}
      {1\over 2\lambda} \conep{\Delta^2}{\X} &=& \frac{1}{\sigma_n^2 }  \sum_{i=1}^n (X_i^2 + 1)\Bigl(  \sum_{j=1}^n a_{ij}X_j
    \Bigr)^2.\label{qua-1}
  \end{eqnarray*}
  Note that by the assumptions $\sigma_n^2 = 2 \sum_{i,j} a_{ij}^2$ and $a_{ii}=0$,
  \[
  \E \Big( {1\over 2\lambda} \conep{\Delta^2}{\X}  \Big) = 1.
  \]
  Then,
  \begin{eqnarray*}
      \E\bigl|1- {1\over 2\lambda}\conep{\Delta^2}{W_n} \bigr|^2 \leq   \Var\Bigl(  \frac{1}{\sigma_n^2} \sum_{i=1}^n (X_i^2 + 1)\Bigl(  \sum_{j=1}^n a_{ij}X_j
    \Bigr)^2 \Bigr).
  \end{eqnarray*}
Observe that
  \begin{eqnarray}
    \lefteqn{\Var\Bigl( \sum_{i=1}^n (X_i^2 + 1)\Bigl(  \sum_{j=1}^n a_{ij}X_j
    \Bigr)^2 \Bigr) }\nonumber\\
    & = & \sum_{i=1}^n \Var\Big( (X_i^2 + 1)\Bigl(  \sum_{j=1}^n a_{ij}X_j
    \Bigr)^2   \Big) \nonumber\\
    && + \sum_{i\neq i'} \Cov\Big( (X_i^2 + 1)\Bigl(  \sum_{j=1}^n a_{ij}X_j
    \Bigr)^2 , (X_{i'}^2 + 1)\Bigl(  \sum_{k=1}^n a_{i'k}X_k
    \Bigr)^2    \Big).\label{qua-2}
  \end{eqnarray}

For the first term, recalling that  $a_{ii}=0$
for all $1 \leq i \le n $, we have
  \begin{eqnarray}
    \lefteqn{ \sum_{i=1}^n \Var\Big( (X_i^2 + 1)\Bigl(  \sum_{j=1}^n a_{ij}X_j
    \Bigr)^2   \Big) } \label{thm3.1-3} \\
    & \leq  & \sum_{i=1}^n \E  (X_i^2 + 1)^2 \E \Bigl(  \sum_{j=1}^n a_{ij}X_j
    \Bigr)^4 \nn \\
           & \leq & C  \sum_{i=1}^n (\E(X_1^4) +
           1 )  \E(X_1^4) \Big( \sum_{j=1}^n
             a_{ij}^4 + \Big( \sum_{j=1}^n
           a_{ij}^2 \Big)^2\Big)
            \nn \\
            & \leq & C (\E(X_1^4) )^2 \sum_{i=1}^n \Big( \sum_{j=1}^n a_{ij}^2  \Big)^2 ,  \nn
  \end{eqnarray}
  where $C$ is an absolute constant.
  To bound the second term of \eqref{qua-2}, for
  any $i\neq k$,
  define
  \begin{eqnarray*}
      M_{i} &=&  (X_i^2 + 1 )
      \Big( \sum_{j = 1}^n a_{ij} X_j  \Big) ^2,
      \\
      M_{i}^{(k)} &=& (X_i^2 + 1 ) \Big(
      \sum_{j \neq k}^n a_{ij} X_j  \Big) ^2.
  \end{eqnarray*}
  For the second term of \eqref{qua-2}, for any $i\neq i'$, we have
  \begin{eqnarray}\label{qua-3}
    \lefteqn{ \Cov\Big( (X_i^2 + 1)\Bigl(  \sum_{j=1}^n a_{ij}X_j
    \Bigr)^2 , (X_{i'}^2 + 1)\Bigl(  \sum_{k=1}^n a_{i'k}X_k
    \Bigr)^2    \Big)  }\nonumber\\
    &=& \Cov(M_i, M_{i'})\nonumber\\
    &=& \Cov( M_{i}^{(i')} , M_{i'}   ) +
    \Cov (M_i, M_{i'}^{(i)}  ) \nonumber\\
    && - \Cov(
    M_{i}^{(i')} , M_{i'}^{(i)} ) + \Cov
    (M_i - M_{i}^{(i')} , M_{i'}-  M_{i'}^{(i)}).
    \end{eqnarray}
%

  Given  $\F_{ii'} := \sigma\{X_{j}, j\neq i,i'\}$, random variables  $ M_{i}^{(i')} $ and $ M_{i'}^{(i)} $ 
  are independent. Thus,
    \begin{eqnarray}
        \lefteqn{  \Cov( M_{i}^{(i')} , M_{i'}^{(i)} )  }\nonumber\\
        & =& \Cov\Big( \E\Big( (X_i^2 + 1) \Bigl(  \sum_{j\neq i'}^n a_{ij}X_j
    \Bigr)^2 \ \Big\vert \F_{ii'}\Big), \E\Big( (X_{i'}^2 + 1) \Bigl(  \sum_{k\neq i}^n a_{i'j}X_k
    \Bigr)^2 \ \Big\vert \F_{ii'}\Big)\Big) \nn \\
    & =& 4 \Cov\Big( \Bigl(  \sum_{j\neq i'}^n a_{ij}X_j    \Bigr)^2, \Bigl(  \sum_{k\neq i}^n a_{i'k}X_k    \Bigr)^2
     \Big) \nn \\
    & \leq &  C
    \sum_{j=1}^n a_{ij}^2a_{i'j}^2 \E(X_1^4) + C  \Big( \sum_{k=1}^n a_{ik}a_{i'k} \Big) ^2.  \nn
\end{eqnarray}
Similar arguments hold  for other terms of \eqref{qua-3}. Hence,
  \begin{eqnarray}
    \lefteqn{ \sum_{i\neq i'}\Cov\Big( (X_i^2 + 1)\Bigl(  \sum_{j=1}^n a_{ij}X_j
    \Bigr)^2 , (X_{i'}^2 + 1)\Bigl(  \sum_{k=1}^n a_{i'k}X_k
    \Bigr)^2    \Big)  }\label{thm3.1-4}\\
    & \leq &  C \E(X_1^4)^2 \Big( \sum_{i=1}^n \Big( \sum_{j=1}^n a_{ij}^2 \Big)^2 + \sum_{1\leq i,j\leq n} \Big( \sum_{k=1}^n a_{ik}a_{jk}\Big)^2 \Big). \nn
  \end{eqnarray}
It follows from \eq{qua-2}, \eq{thm3.1-3} and \eq{thm3.1-4} that
  \begin{eqnarray}
    \nonumber\lefteqn{\E\bigl|1- {1\over
    2\lambda}\conep{\Delta^2}{W_n} \bigr| } 
    \\ &  \le  & \
    C \sigma_n^{-2} \E(X_1^4)\Big( \sqrt{ \sum_{i}\Big(\sum_j
     a_{ij}^2 \Big)^2}+
   \sqrt{\sum_{i,j}\Big(\sum_{k} a_{ik}a_{jk}
 \Big)^2 }    \Big)\label{qrc1}.  
  \end{eqnarray}
Finally, it is sufficient to estimate the bound of $\E|\conep{\Delta|\Delta| }{W_n} |/\lambda$. In fact,
  \begin{eqnarray*}
      \frac{1}{\lambda}\lefteqn{\conep{\Delta|\Delta| }{\X}}\\
    &=& \frac{2 }{\sigma_n^2} \sum_{i=1}^n \conepbb{ \Big(\sum_{j} a_{ij} X_j (X_i - X_i')   \Big) \Big|\sum_j a_{ij} X_j (X_i - X_i')   \Big|}{\X}\\
    & = & \frac{2 }{ \sigma_n^2} \sum_{i=1}^n \Big( \sum_j
    a_{ij}X_j \Big) \Big| \sum_j a_{ij}X_j \Big| B_i,
  \end{eqnarray*}
  where $B_i =
  \ep{(X_i-X_i')|X_i-X_i'|\big \vert X_i}$.

  For $i \neq i'$, define
  \begin{eqnarray*}
      K_i &=&  \Big( \sum_j a_{i j}X_{j} \Big) \Big| \sum_j a_{i j }
      X_j  \Big| B_{i} ,  \\
    K_i^{(i')} &=&  \Big( \sum_{j\neq i'} a_{i j}
    X_{j}\Big) \Big|
    \sum_{j\neq i'}  a_{i j }
    X_j  \Big| B_{i}
  \end{eqnarray*}
  and thus,
  \begin{eqnarray*}
      \Var( (1/\lambda) \conep{\Delta|\Delta|}{\X} ) &=&
    \frac{4}{\sigma_n^4 } \sum_{i=1}^n \Var(K_i) +
    \frac{4}{\sigma_n^4} \sum_{i\neq i'} \Cov(K_i,
    K_{i'}).
  \end{eqnarray*}

  Similar to \eqref{thm3.1-3}, we have
  \begin{eqnarray*}
    \sum_{i=1}^n \Var(K_i) \leq C (\ep{X_1^4})^2
    \sum_{i=1}^n \Big( \sum_{j=1}^n
    a_{ij}^2\Big)^2.
  \end{eqnarray*}

  Recalling  the definition of $\F_{ii'}$, given that  $\F_{ii'}$, we have $K_i^{(i')}$ and
  $K_{i'}^{(i)}$ are conditionally independent, and  thus
  \begin{eqnarray*}
      \Cov( K_{i}^{(i')}, K_{i'}^{(i)} | \F_{ii'}  ) = 0.
  \end{eqnarray*}
  Moreover,
  \begin{eqnarray*}
      \conep{K_{i}^{(i')}}{\F_{ii'}} = \Big( \sum_{j\neq
    i'} a_{i j} \Big) \Big|
    \sum_{j\neq i'}  a_{i j }
    X_j  \Big| \E( B_{i}) = 0
  \end{eqnarray*}
  because $\ep{B_i} = 0$. This proves
  $\Cov(K_{i}^{(i')}, K_{i'}^{(i)}) =0$.
  Similarly, we have
  $\Cov(K_{i}^{(i')}, K_{i'}) = 0$ and $\Cov(K_i,
  K_{i'}^{(i)}) = 0$.
  Therefore,
  \begin{eqnarray*}
      | \Cov(K_i,K_{i'})| &=& \E\big|(K_i -
    K_{i}^{(i')})(K_{i'} - K_{i'}^{(i)})  \big| \\
    &\leq &
    \frac{1}{2} \E(K_i - K_{i}^{(i')})^2 +
    \frac{1}{2}\ep{K_{i'} - K_{i'}^{(i)}}^2.
  \end{eqnarray*}
  Observe that
  \begin{eqnarray*}
      |K_{i} - K_{i}^{(i')} | \leq |B_i| \Big( 2 \Big|
    a_{ii'} X_{i'} \sum_{j\neq i'} a_{ij} X_j
\Big| + a_{ii'}^2 X_{i'}^2 \Big),
  \end{eqnarray*}
  thus,
  \begin{eqnarray*}
      \E(K_i - K_{i}^{(i')})^2 & \leq & C \E(B_i)^2 \Big(
      a_{ii'}^2 \sum_{j} a_{ij}^2 + a_{ii'}^4
      \ep{X_1^4}
    \Big) \\
    & \le &  C (\ep{X_1^4})^2 \Big(
      a_{ii'}^2 \sum_{j} a_{ij}^2 + a_{ii'}^4
     \Big).
  \end{eqnarray*}
  A similar result is true for $\ep{K_{i'} -
  K_{i'}^{(i)}}^2$.
  Combining the inequlities, we have
  \begin{eqnarray*}
      \Var( \conep{\Delta|\Delta|}{\X}  / \lambda ) \leq C \sigma_n^{-4}
    (\ep{X_1^4})^2 \sum_{i=1}^n  \Big(
    \sum_{j = 1}^n a_{ij}^2 \Big)^2.
  \end{eqnarray*}

  By the Cauchy inequality, we have
  \begin{eqnarray}
  \lefteqn{
    {1\over \lambda} \E\big|\conep{\Delta|\Delta| }{W_n}  \big|} \label{qrc2} \\
    & \leq & C \sigma_n^{-2}\E(X_1^4)\Big( \sqrt{ \sum_{i}\Big(\sum_j a_{ij}^2 \Big)^2}+ \sqrt{\sum_{i,j}\Big(\sum_{k} a_{ik}a_{jk} \Big)^2 }    \Big) . \nn
  \end{eqnarray}
This  completes the proof of  \cref{thma1}
by \eqref{qrc1} and \eqref{qrc2}.

\def\s{\bar{X}}

\subsection{Proof of \cref{cwmthm1}}
Recall that $S_n = \sum_{i=1}^n X_i$.
Let $\X=\sigma(X_1,\cdots,X_n)$. 
We first construct an exchangeable pair $(S_n,S_n')$ as follows. For each $1\leq i\leq n$, given $\{X_j, j\neq i\}$, let $X_i'$ be conditionally independent of $X_i$ with the same conditional distribution of $X_i$. Let $I$ be a random index uniformly distributed over $\{1,\cdots,n\}$ independent of any other random variable. Define $S_n' = S_n - X_I + X_I'$;   then, $(S_n,S_n')$ is an exchangeable pair.

The proof of \cref{cwmthm1} is based on the following propositions.
Let $\s = S_n / n$.

\begin{proposition}
    Under the assumptions in \cref{cwmthm1},
    for $\beta = 1$, we have
    \begin{eqnarray}
        \label{lc1-2}
        \conep{S_n - S_n'}{\X} = \frac{H^{(2k)}(0) }{(2k-1)!} \s^{2k-1} + R_1,
    \end{eqnarray}
    where $\E |R_1| \leq C n^{-1}$ with the constant $C$ depending only on $b_0,b_1,b_3$ and $k$.

    For $0 < \beta < 1$,  we have
    \begin{eqnarray}\label{lc1-1}
        \conep{S_n - S_n'}{\X} = (1-\beta) \bar{X} + R_2,
    \end{eqnarray}
    where $\E|R_2| \leq C n^{-1}$ and $C$ depends only on $\beta$ and $b$.
    \label{l-c1}
\end{proposition}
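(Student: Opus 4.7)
My plan is to compute $\conep{S_n - S_n'}{\X}$ explicitly using the Gibbs conditional density and then carry out a double Taylor expansion. Since $I$ is uniform on $\{1,\dots,n\}$ and independent of $\X$ and of $\{X_i'\}$, one reduces to a per-coordinate computation:
\[
  \conep{S_n-S_n'}{\X}
  = \bar X - \frac{1}{n}\sum_{i=1}^n g_i,
  \qquad g_i := \conep{X_i'}{X_j, j\ne i}.
\]
From \eqref{cwmpdf}, after cancelling the $X_i$-independent factor $e^{\beta T_i^{2}/(2n)}$ with $T_i = S_n - X_i$, the conditional law of $X_i$ given the other coordinates is proportional to $\exp(u_i x + \beta x^{2}/(2n))\,d\rho(x)$ with $u_i := \beta T_i/n = \beta\bar X - \beta X_i/n$. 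Writing $\phi(s) := \int e^{sx}\,d\rho(x)$ and $\psi := \log\phi$, this gives $g_i$ as an explicit ratio of integrals; the sub-Gaussian hypothesis \eqref{c1-00} (resp.\ \eqref{c1-01}) makes $\phi$ entire and $\psi$ smooth, so Taylor expansion on any compact set is legitimate.

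The central step is a double Taylor expansion. First I would expand $e^{\beta x^{2}/(2n)} = 1 + \beta x^{2}/(2n) + O(1/n^{2})$ inside the ratio defining $g_i$ to obtain $g_i = \psi'(u_i) + (1/n)\, h_n(u_i) + O(1/n^{2})$, where $h_n$ is smooth and bounded on compact sets. Then, since $u_i = \beta\bar X - \beta X_i/n$, expand $\psi'(u_i) = \psi'(\beta\bar X) - (\beta X_i/n)\psi''(\beta\bar X) + O(X_i^{2}/n^{2})$. Averaging over $i$, the $X_i$-linear correction aggregates to $-(\beta\bar X/n)\psi''(\beta\bar X)$, so
\[
  \conep{S_n-S_n'}{\X}
  = \bar X - \psi'(\beta\bar X) + \widetilde R,
\]
where $\widetilde R$ is a finite sum of $O(1/n)$ terms involving polynomials in $\bar X$ and in $\tfrac{1}{n}\sum_i X_i^{2}$.

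Next I would Taylor-expand $\psi'(\beta\bar X)$ at $0$ using the identity $H(s) = s^{2}/2 - \psi(s)$, which gives $\bar X - \psi'(\beta\bar X) = (1-\beta)\bar X + H'(\beta\bar X)$. Because $\E\xi = 0$ and $\Var(\xi) = 1$, we have $H'(0) = H''(0) = 0$, so $H'(s) = O(s^{2})$ near $0$; this yields \eqref{lc1-1} with $R_2 = H'(\beta\bar X) + \widetilde R$. For $\beta = 1$ with $\rho$ of type $k$, the first $2k-1$ moments of $\rho$ match those of the standard normal, so their first $2k-1$ cumulants coincide and $H(s) = (H^{(2k)}(0)/(2k)!)\,s^{2k} + O(s^{2k+1})$. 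Differentiating,
\[
  \bar X - \psi'(\bar X) = H'(\bar X)
  = \frac{H^{(2k)}(0)}{(2k-1)!}\,\bar X^{2k-1} + O(\bar X^{2k}),
\]
giving \eqref{lc1-2} with $R_1 = O(\bar X^{2k}) + \widetilde R$.

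Finally, to promote these to $\E|R_j| \le C/n$ I would invoke concentration of $\bar X$: $\E\bar X^{2} = O(1/n)$ for $\beta < 1$ and $\E|\bar X|^{2k} = O(1/n)$ for $\beta = 1$ with $\rho$ of type $k$. Both follow from the Hubbard--Stratonovich representation of $P_{n,\beta}$ as a Gaussian mixture of tilted product measures, combined with \eqref{c1-00} and \eqref{c1-01}. This concentration is the main obstacle: the Taylor expansions above are only valid on $\{|\bar X| \le \varepsilon\}$, so one must also show that $\P(|\bar X| > \varepsilon)$ is super-polynomially small. The $-b_1 t^{2k}$ correction built into \eqref{c1-01} is precisely engineered to give the Hubbard--Stratonovich density the local profile $\exp(-n b_1 t^{2k} + \cdots)$ near the origin and sub-Gaussian decay away from it; together these two behaviors deliver both the correct concentration scale $|\bar X| \sim n^{-(2k-1)/(2k)}$ and the tail control needed to close the $L^1$ estimate.
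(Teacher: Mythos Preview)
Your plan is essentially the same as the paper's: write $\conep{S_n-S_n'}{\X}=\bar X-\tfrac1n\sum_i g_i$ with $g_i$ the conditional mean under the Gibbs law, peel off the $e^{\beta x^2/(2n)}$ perturbation to reduce to $\psi'(\beta\bar X_i)$, Taylor-expand, and close with concentration of $\bar X$. The paper expands $\psi'(\beta\bar X_i)$ around $0$ first and then replaces $\bar X_i$ by $\bar X$, whereas you expand around $\beta\bar X$ first and then around $0$ via the identity $\bar X-\psi'(\beta\bar X)=(1-\beta)\bar X+H'(\beta\bar X)$; both orderings give the same main term and remainder structure.

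Two points deserve care. First, writing $e^{\beta x^2/(2n)}=1+\beta x^2/(2n)+O(1/n^2)$ is not legitimate pointwise since $x$ is unbounded; the remainder is $O(x^4/n^2)$ and must be integrated against $e^{\beta\bar X_i x}\,d\rho(x)$. The paper avoids this formal expansion and instead bounds the difference directly using $|e^{\beta x^2/(2n)}-1|\le (\beta x^2/(2n))e^{\beta x^2/(2n)}$ together with the sub-Gaussian bound on $\rho$ (their Lemma on $\E e^{b_3\xi^2/2}<\infty$). Second, and more importantly, your Taylor remainders are not $O(\bar X^{2k})$ with a fixed constant: since $|H^{(2k+1)}(t)|$ and $|\psi^{(j)}(t)|$ grow like $e^{t^2/2}$, the integral form of the remainder is of order $\bar X^{2k}e^{C\bar X^2}$. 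Consequently, simple moment bounds $\E|\bar X|^{2k}=O(1/n)$ are not sufficient on their own. The paper closes this with a weighted moment estimate $\E\bigl(|\bar X_i|^r e^{\theta\bar X_i^2}\bigr)\le Cn^{-r/(2k)}$ (respectively $Cn^{-r/2}$ for $\beta<1$), which absorbs the exponential factor; your alternative of truncating at $\{|\bar X|\le\varepsilon\}$ and using super-polynomial tails also works, but you must explicitly control $\E\bigl[|\text{remainder}|\,\mathbf 1_{|\bar X|>\varepsilon}\bigr]$, which again needs the sub-Gaussian/sub-$k$-Gaussian input. Finally, the concentration scale for $\beta=1$ is $|\bar X|\sim n^{-1/(2k)}$ (so that $\E|\bar X|^{2k}=O(1/n)$), not $n^{-(2k-1)/(2k)}$.
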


\begin{proposition}
    Under the assumptions in \cref{cwmthm1}, 
    we have
    \begin{eqnarray}\label{lc2-1}
        \E \big| \conep{(S_n-S_n')^2}{\X} - 2  \big| \leq C n^{-1/2}, \text{ for } 0 < \beta < 1,
    \end{eqnarray}
    and
    \begin{eqnarray}
        \label{lc2-2}
        \E \big| \conep{(S_n-S_n')^2}{\X} - 2  \big| \leq C n^{-1/2k}, \text{ for } \beta=1.
    \end{eqnarray}
    \label{l-c2}
\end{proposition}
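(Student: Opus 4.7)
Here is my plan. The first step is to reduce the conditional second moment to a one-vertex computation. Since $S_n - S_n' = X_I - X_I'$ with $I$ uniform on $\{1,\ldots,n\}$ and $X_I'$ conditionally i.i.d.\ with $X_I$ given $X_{-I} := (X_j)_{j \neq I}$,
\begin{equation*}
    \E\bigl[(S_n-S_n')^2 \bigm| \X\bigr] = \frac{1}{n}\sum_{i=1}^n \bigl[(X_i - m_i)^2 + \sigma_i^2\bigr],
\end{equation*}
where $m_i = \E[X_i \mid X_{-i}]$ and $\sigma_i^2 = \mathrm{Var}(X_i \mid X_{-i})$. Reading off \eqref{cwmpdf}, the conditional law of $X_i$ given $X_{-i}$ is the exponential tilt proportional to $\exp\bigl(t_i x + \beta x^2/(2n)\bigr)\, d\rho(x)$, with $t_i = \beta(S_n - X_i)/n$.

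The second step is a Taylor expansion of the tilted moments around the untilted $\rho$. Letting $K(t) = \log \int e^{tx}\,d\rho(x)$ be the cumulant generating function of $\rho$, one readily obtains $m_i = K'(t_i) + r_{1,i}/n$ and $\sigma_i^2 = K''(t_i) + r_{2,i}/n$ with remainders smooth and polynomially controlled in $|t_i|$. Using $K'(0)=0$, $K''(0)=1$ from \eqref{cwmc0}, together with the subcritical expansion $K(t) = t^2/2 + O(t^3)$ (when $\beta<1$) or the type-$k$ relation $K(t) = t^2/2 - c_2 t^{2k} + O(t^{2k+1})$ (when $\beta = 1$), and noting that $\sum_i X_i (S_n - X_i) = S_n^2 - \sum_i X_i^2$, I arrive at
\begin{equation*}
    \E\bigl[(S_n-S_n')^2 \bigm| \X\bigr] - 2 = \Bigl(\frac{1}{n}\sum_{i=1}^n X_i^2 - 1\Bigr) - \frac{2\beta S_n^2}{n^2} + \mathcal{R}_n,
\end{equation*}
where $\mathcal{R}_n$ collects higher-order powers of $t_i$ and $1/n$-corrections.

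The third step is an $L^1$ bound term by term. The exponential assumptions \eqref{c1-00} and \eqref{c1-01} supply standard moment estimates on the magnetization, namely $\E S_n^2 = O(n)$ when $\beta<1$ and $\E |S_n|^{2k} = O(n^{2k-1})$ at criticality; hence $\E|S_n^2/n^2|$ is $O(n^{-1})$ for $\beta<1$ and $O(n^{-1/k})$ for $\beta=1$, both dominated by the claimed rates. The remainder $\mathcal{R}_n$ is handled analogously via H\"older's inequality applied to the polynomial control of $K'$ and $K''$, using the same magnetization moments. The delicate term is $\E|\tfrac{1}{n}\sum_i X_i^2 - 1|$: for $\beta<1$, exchangeability plus a covariance calculation under the Gibbs measure yields $O(n^{-1/2})$; at $\beta=1$, the long-range correlation between $X_i^2$ and the magnetization forces the slower $O(n^{-1/(2k)})$, mirroring the scaling $S_n \sim n^{1 - 1/(2k)}$.

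The main obstacle I expect is obtaining the sharp rate $\E|\tfrac{1}{n}\sum_i X_i^2 - 1| = O(n^{-1/(2k)})$ under the critical Curie-Weiss measure: naive independent-sum bounds give only $O(n^{-1/2})$, but because quadratic functionals of the $X_i$'s inherit the slow scaling of the magnetization through the mean-field coupling, one must exploit the full Gibbs structure. I would attempt this either by a direct variance computation via the Hubbard-Stratonovich representation of $Z_n$, or by invoking the concentration inequalities of \citet{chatterjee2010}, carefully tracking how $\Var(\sum_i X_i^2)$ scales like $n^{2-1/k}$ at criticality.
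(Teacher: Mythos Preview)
Your proposal is correct and follows essentially the same route as the paper. The paper also decomposes $\conep{(S_n-S_n')^2}{\X}-2$ into the term $R_3=\tfrac{1}{n}\sum_i(X_i^2-1)$ plus remainders $R_4,R_5$ coming from the tilted first and second moments, bounds $R_4,R_5$ by Taylor-expanding the tilt (as in the proof of Proposition~\ref{l-c1}), and handles the delicate term $R_3$ exactly as you suggest: it proves the covariance bound $|\E(X_i^2-1)(X_j^2-1)|\le Cn^{-1}$ for $\beta<1$ and $Cn^{-1/k}$ for $\beta=1$ (Lemma~\ref{cl-7}) via the Gaussian-integral (Hubbard--Stratonovich) identity, which is precisely the first of the two methods you propose.
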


\begin{proposition}
    Under the assumptions in \cref{cwmthm1}, we have for $0 < \beta  \leq 1$,
        \begin{eqnarray}\label{lc3-1}
        \E \big| \conep{(S_n-S_n')|S_n-S_n'|}{\X}  \big| \leq C n^{-1/2}. 
    \end{eqnarray}
    \label{l-c3}
\end{proposition}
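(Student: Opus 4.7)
The plan is to expand the conditional expectation using the Gibbs-sampler construction and then exploit a conditional symmetry. Set $\F_i := \sigma(X_k : k\neq i)$ and let $\mu_i$ denote the conditional law of $X_i$ given $\F_i$, i.e., the measure on $\R$ with density proportional to $\exp\bigl(\beta (y+S_{-i})^2/(2n)\bigr)\,d\rho(y)$, where $S_{-i}:=\sum_{k\neq i} X_k$. Since $I$ is uniform on $\{1,\ldots,n\}$ and independent of everything else, with $\Delta = X_I-X_I'$,
\begin{equation*}
\conep{\Delta|\Delta|}{\X} = \frac{1}{n}\sum_{i=1}^n Q_i, \qquad Q_i:=\conep{(X_i-X_i')|X_i-X_i'|}{\X}.
\end{equation*}
Given $\F_i$, the pair $(X_i,X_i')$ is i.i.d.\ under $\mu_i$, so $X_i-X_i'$ is symmetric about $0$; since $z\mapsto z|z|$ is odd, $\conep{Q_i}{\F_i}=0$. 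Moreover, $Q_i$ depends on $\F_i$ only through the scalar $S_{-i}$.

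By Cauchy--Schwarz it is enough to show $\E\bigl(\sum_i Q_i\bigr)^2\leq Cn$. Since the coordinates are exchangeable under $P_{n,\beta}$, expanding the square reduces the problem to proving $\E Q_1^2 = O(1)$ and $|\E Q_1 Q_2|=O(1/n)$. The diagonal bound is a consequence of $|Q_i|\leq \conep{(X_i-X_i')^2}{\X}$ together with the uniform-in-$n$ fourth-moment bound on $X_i$ under $P_{n,\beta}$, which follows from the tail hypotheses \eqref{c1-00}--\eqref{c1-01} and is already required in the proofs of \cref{l-c1,l-c2}.

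For the cross term, I would condition on $\F_1$. Under this conditioning $Q_1 = G(X_1)$ with $G(x) := \int (x-y)|x-y|\,\mu_1(dy)$, and $\int G\,d\mu_1 = 0$. Moreover $Q_2$ depends on $X_1$ only through the shift $S_{-2} = X_1 + C$ where $C:=\sum_{k\neq 1,2}X_k$; writing $Q_2 = H(X_1)$ and subtracting $H(\alpha_1)$ with $\alpha_1:=\int y\,\mu_1(dy)$,
\begin{equation*}
\conep{Q_1 Q_2}{\F_1} = \int G(x)\bigl(H(x)-H(\alpha_1)\bigr)\,\mu_1(dx).
\end{equation*}
A log-derivative computation gives $\partial_s \log \mu_i(dy) = (\beta/n)(y-\alpha_i(s))$ when $\mu_i$ is regarded as a function of $s=S_{-i}$, where $\alpha_i(s)$ is its mean. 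Differentiating under the integral and applying Cauchy--Schwarz against second moments of the conditional law yields $|H'(x)|=O\bigl((1+X_2^2)/n\bigr)$, so $|H(x)-H(\alpha_1)|\leq O\bigl((1+X_2^2)/n\bigr)\cdot|x-\alpha_1|$. Plugging this in and invoking Cauchy--Schwarz once more with the uniform second-moment bound on $\mu_1$ produces $|\conep{Q_1 Q_2}{\F_1}|=O\bigl((1+X_2^2)/n\bigr)$, and taking expectations finally gives $|\E Q_1 Q_2|=O(1/n)$, as required.

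The main obstacle I expect is not the algebra above but the underlying quantitative moment bookkeeping: one must verify that fourth moments of $X_i$ under $P_{n,\beta}$ and second moments of the conditional laws $\mu_i$ (at the a.s.\ realized values of $S_{-i}$) are bounded uniformly in $n$. In the subcritical case $\beta<1$ this is controlled by the subgaussian tail \eqref{c1-00}, whose constant $b>\beta$ dominates the quadratic perturbation $\beta y^2/(2n)$ in $\mu_i$. In the critical case $\beta=1$ the correction $-b_1 t^{2k}$ in \eqref{c1-01} plays the corresponding role at the critical scale $S_n = O(n^{1-1/(2k)})$. These are essentially the same estimates already used in the proofs of \cref{l-c1,l-c2}, and can be imported from there with only minor modification.
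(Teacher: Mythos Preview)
Your high-level structure matches the paper exactly: write $\conep{(S_n-S_n')|S_n-S_n'|}{\X}=\frac1n\sum_iQ_i$ with $Q_i=\conep{(X_i-X_i')|X_i-X_i'|}{\X}$, reduce via Cauchy--Schwarz to showing $\E Q_1^2=O(1)$ and $|\E Q_1Q_2|=O(1/n)$, and handle the diagonal term by the fourth-moment bound. This is precisely the content of the paper's Lemma~\ref{cl-8}, from which Proposition~\ref{l-c3} is read off.

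For the cross term you take a different route from the paper. The paper introduces auxiliary variables $Q_1',Q_2'$ obtained by replacing the shift parameter $\bar X_i$ in the conditional law by the ``symmetrized'' shift $\bar X_{12}=n^{-1}(S_n-X_1-X_2)$. Because $u(s,t)=(s-t)|s-t|$ is antisymmetric, the leading contribution to $\E Q_1'Q_2'$ vanishes conditionally on $\xi_3,\dots,\xi_n$, leaving a remainder of order $1/n$; the differences $Q_i-Q_i'$ are then shown to be $O(1/n)$ in $L^2$ by a direct expansion. Your method instead fixes $\F_1$, centers $H$ at $\alpha_1$ using $\conep{Q_1}{\F_1}=0$, and exploits the $1/n$ from the log-derivative $\partial_s\log\mu_2(dy)=(\beta/n)(y-\alpha_2)$. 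Both strategies work; the paper's is arguably more direct because the antisymmetry kills the main term structurally rather than by a Lipschitz estimate.

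One point in your sketch needs tightening. You assume that second moments of the conditional laws $\mu_i$ are bounded uniformly in $n$ at the almost-sure realized values of $S_{-i}$. This is not true as an a.s.\ bound: by Lemma~\ref{cl-5} one only has $\conep{|X_i|^r}{\bar X_i}\le Ce^{\beta\bar X_i^2}$, and the right-hand side is not uniformly bounded pathwise. In your argument $|H'(t)|$ therefore picks up a factor $e^{C((t+C)/n)^2}$, and $|G(x)|$ likewise carries an $e^{\beta\bar X_1^2}$ factor. These must be propagated through the integral and controlled in expectation at the final step via Lemma~\ref{cl-6} (which bounds $\E[|\bar X_i|^re^{\theta\bar X_i^2}]$ for any fixed $\theta>0$). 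This is doable, since $\bar X_1$ and $\bar X_2$ differ by $O(1/n)$ and the exponential moments are finite for all $\theta$, but it is more than a ``minor modification'': you should state the bound on $|H'|$ with the correct exponential weight and then invoke Lemma~\ref{cl-6} explicitly when taking expectations.
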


We now continue to prove \cref{cwmthm1}.
\begin{enumerate}[(i)]

    \item
When $0 < \beta <1$, define $W_n = S_n/\sqrt{n}$ and $W_n' = S_n'/\sqrt{n}$.  Then, $(W_n,W_n')$ is an exchangeable pair, and  by \eqref{lc1-1} in \cref{l-c1},
we have
\begin{eqnarray*}
    \conep{W_n-W_n'}{W_n} = {1 \over n}((1-\beta)W_n + \sqrt{n}R_2 ),
\end{eqnarray*}
where $\E|R_2| \leq C n^{-1}$.
Moreover, taking $\lambda = 1/n$,  by  \eqref{lc2-1} and \eqref{lc3-1}, we have
\begin{eqnarray*}
  \E\Big| 1 - {1\over 2\lambda} \conep{(W_n-W_n')^2}{W_n}\Big| \leq C n^{-1/2}
\end{eqnarray*}
and
\begin{eqnarray*}
  \E\Big|{1\over 2\lambda} \conep{(W_n-W_n')|W_n-W_n'|}{W_n}\Big| \leq C n^{-1/2}
\end{eqnarray*}
for some constant $C$.
This proves \eqref{cwmt1-1} by \cref{thm-2} with $g(w) = (1-\beta)w$.
\item
When $\beta = 1$, define $W_n = n^{-1+\frac{1}{2k}} S_n$ and $W_n'=n^{-1+\frac{1}{2k}} S_n'$. Then, $(W_n,W_n')$ is an exchangeable pair, and by \eqref{lc1-2},
\begin{eqnarray*}
    \conep{W_n-W_n'}{W_n} = n^{-2+1/k} \Big( { {H^{2k}(0)\over (2k-1)!} W_n^{2k-1} } + n^{-1+\frac{1}{2k}} R_1   \Big),
\end{eqnarray*}
where $n^{-1+\frac{1}{2k}} \E|R_1|\leq C n^{-\frac{1}{2k}}$. Taking $\lambda = n^{-2+1/k}$ and by \eqref{lc2-2} and \eqref{lc3-1}, we have
\begin{eqnarray*}
  \E\Big| 1 - {1\over 2\lambda}
  \conep{(W_n-W_n')^2}{W_n}\Big| \leq C n^{-\frac{1}{2k}},
\end{eqnarray*}
and
\begin{eqnarray*}
  \E\Big|{1\over 2\lambda}
  \conep{(W_n-W_n')|W_n-W_n'|}{W_n}\Big| \leq C n^{-1/2}.
\end{eqnarray*}

This completes the proof of \eqref{cwmt1-2} by  \cref{thm-2} with  $g(w) = { {H^{2k}(0)\over (2k-1)!} w^{2k-1} } $. 
\end{enumerate}

To prove \cref{l-c1,l-c2,l-c3}, we need to prove some preliminary lemmas.

In what follows, we let $\xi, \xi_1,\xi_2,\dots$
be independent and identically distributed random
variables with probability measure $\rho$
satisfying
\eqref{cwmc0}, and \eqref{c1-00} or
\eqref{c1-01}.

\begin{lemma}\label{cwmlem1}
For any $z>0$,  under
          \eqref{c1-00}, we have
  \begin{eqnarray}
      \P(|\xi_1+\cdots+\xi_n| > z) \leq 2\exp\Big(- {bz^2\over 2n} \Big),  \text{ for } 0 < \beta < 1. \label{cwmeq3}
  \end{eqnarray}
  Under \eqref{c1-01}, and for $\beta=1$,
      \begin{eqnarray}\label{cl-1}
    \lefteqn{\P(| \xi_1 + \cdots + \xi_n|  > z)}\nonumber \\
    & \leq &
    \begin{cases}
      2 \expb{ - \frac{z^2}{2n} -
      \frac{b_1 z^{2k}}{n^{2k-1}}}, & 0 < z \leq b_0
      n,\\
      2 \expb{- \frac{b_2 z^2 }{ 2n}}, & z > b_0n.
    \end{cases}
  \end{eqnarray}
\end{lemma}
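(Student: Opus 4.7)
My plan is to prove this via the standard Chernoff exponential moment bound. By Markov's inequality and independence, for any $t > 0$,
\[
    \P(\xi_1 + \cdots + \xi_n > z) \leq e^{-tz} (\E e^{t\xi})^n.
\]
The MGF hypotheses in both \eqref{c1-00} and \eqref{c1-01} are symmetric in $t$ (they bound $\E e^{t\xi}$ by an even function of $t$ on the relevant set), so substituting $t \mapsto -t$ yields the same bound on $\E e^{-t\xi}$ and hence on $\P(\xi_1 + \cdots + \xi_n < -z)$. A union bound over the two tails then produces the factor of $2$ appearing in both \eqref{cwmeq3} and \eqref{cl-1}.

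For case (i), I would plug $\E e^{t\xi} \leq e^{t^2/(2b)}$ into the Chernoff bound and obtain $\exp(-tz + nt^2/(2b))$. Optimizing over $t > 0$ gives $t^\ast = bz/n$ and the quadratic tail $\exp(-bz^2/(2n))$, which is exactly \eqref{cwmeq3}.

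For case (ii), I would split on whether $z \leq b_0 n$ or $z > b_0 n$ and choose the Chernoff parameter so that the corresponding branch of \eqref{c1-01} is invoked. When $0 < z \leq b_0 n$, I set $t = z/n$; this satisfies $t \leq b_0$, so the first branch of \eqref{c1-01} gives $\E e^{t\xi} \leq \exp(t^2/2 - b_1 t^{2k})$, and direct substitution yields
\[
    \exp\Bigl(-tz + n\bigl(\tfrac{t^2}{2} - b_1 t^{2k}\bigr)\Bigr) = \exp\Bigl(-\tfrac{z^2}{2n} - \tfrac{b_1 z^{2k}}{n^{2k-1}}\Bigr).
\]
When $z > b_0 n$, I set $t = b_2 z/n$; since $b_2 > 1$, this $t$ exceeds $b_2 b_0 > b_0$, placing it in the second branch of \eqref{c1-01}, and
\[
    \exp(-tz + nt^2/(2b_2)) = \exp(-b_2 z^2/(2n)),
\]
which completes \eqref{cl-1}.

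There is no substantive obstacle here; the argument is essentially bookkeeping. The only point requiring attention is verifying, in case (ii), that the Chernoff parameter selected in each regime lies in the range where the assumed MGF bound is valid ($|t| \leq b_0$ in the first subcase, $|t| > b_0$ in the second). Both verifications are immediate from the case split on $z$ versus $b_0 n$ and the fact that $b_2 > 1$.
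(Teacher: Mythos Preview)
Your proof is correct and matches the paper's approach essentially line for line: the paper also applies the Chernoff (exponential Chebyshev) bound with independence, optimizes $t$ in case (i), sets $t=z/n$ in the first subcase of (ii), and handles the remaining subcase by the analogous choice you wrote out. The only difference is that the paper leaves the $z>b_0 n$ subcase to ``a similar argument,'' whereas you spell it out.
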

\begin{proof}
    \eqref{cwmeq3} follows easily from \eqref{c1-00} and Chebyshev's inequality.

    As for \eqref{cl-1}, when $0 < z \leq b_0n$, set $t = z/n$.
  By the  Chebyshev inequality, we have
  \begin{eqnarray*}
    \P( \xi_1 + \cdots + \xi_n > z) & \leq & e
    ^{-tz} \E e^{t ( \xi_1 + \cdots + \xi_n)}\\
    &=& e^{-tz}  \left( \E e^{ t \xi} \right)^n
    \\
    & \le & e^{-tz} \expb{ \frac{nt^2}{2} - n b_1
    t^{2k}} \\
    &=&  \expb{ - \frac{z^2}{2n} -
    \frac{b_1 z^{2k}}{n^{2k-1}}}.
  \end{eqnarray*}
  Similarly,
  \begin{eqnarray*}
    \P( \xi_1 + \cdots + \xi_n < -z ) \leq  \expb{ - \frac{z^2}{2n} -
    \frac{b_1 z^{2k}}{n^{2k-1}}},
  \end{eqnarray*}
  and hence
  \begin{eqnarray*}
    \P( | \xi_1 + \cdots + \xi_n | >z ) \leq 2  \expb{ - \frac{z^2}{2n} -
    \frac{b_1 z^{2k}}{n^{2k-1}}}.
  \end{eqnarray*}

  A similar argument for $ z > b_0 n$ completes
  the proof of \eqref{cl-1}.
\end{proof}

\begin{lemma}
  Under condition \eqref{c1-01} and for $\beta = 1$, we have
  \begin{eqnarray}\label{cl2-1}
   cn^{\frac{1}{2} - \frac{1}{2k}} \leq  \E \expb{\frac{1}{2n}( \xi_1 + \cdots +
   \xi_n)^2}  \leq C n^{\frac{1}{2} -
   \frac{1}{2k}},
 \end{eqnarray}
where $c$ and $C$ are constants such that $0 < c < C < \infty$.
 Under condition \eqref{c1-00}, for $0 < \beta < 1$, we have
 \begin{eqnarray}\label{cl2-0}
     1 \leq  \E \exp \Big( \frac{\beta}{2n}(\xi_1+\dots + \xi_n)^2 \Big) \leq C,
 \end{eqnarray}
where $C> 1$ is a finite constant.

  \label{cl-2}
\end{lemma}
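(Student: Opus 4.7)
The plan is to use the Hubbard--Stratonovich (Gaussian integration) identity
\[
    \E \expb{\frac{\beta S_n^2}{2n}} = \sqrt{\frac{n}{2\pi \beta}} \int_{-\infty}^{\infty} e^{-nt^2/(2\beta)} \bigl(\E e^{t\xi}\bigr)^n dt,
\]
which is valid by Fubini since both \eqref{c1-00} and \eqref{c1-01} guarantee that $\E e^{t\xi}<\infty$ for every $t\in \R$. This reduces the lemma to a one-dimensional Laplace-type integral governed by $H(t) := t^2/2 - \log \E e^{t\xi}$.

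For \eqref{cl2-0}, condition \eqref{c1-00} yields $(\E e^{t\xi})^n \leq e^{nt^2/(2b)}$ with $b > \beta$, so the integrand is dominated by a Gaussian of strictly positive rate $(b-\beta)/(2b\beta)$. A direct Gaussian computation produces the upper bound $\sqrt{b/(b-\beta)}$, while the lower bound $\geq 1$ is immediate from $e^{\beta S_n^2/(2n)}\geq 1$. For the upper bound in \eqref{cl2-1} I split the integral at $|t|=b_0$. On $|t|\leq b_0$, \eqref{c1-01} gives $e^{-nt^2/2}(\E e^{t\xi})^n \leq e^{-n b_1 t^{2k}}$, and the substitution $u=t(nb_1)^{1/(2k)}$ bounds its integral by $Cn^{-1/(2k)}$. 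On $|t|>b_0$, \eqref{c1-01} forces the integrand to be $\leq e^{-nt^2(b_2-1)/(2b_2)}$, whose contribution is exponentially small in $n$. Multiplying by $\sqrt{n/(2\pi)}$ produces the claimed order $n^{1/2 - 1/(2k)}$.

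For the matching lower bound in \eqref{cl2-1} the one-sided estimate \eqref{c1-01} is not enough; instead I use that $\rho$ is of type $k$, an implicit hypothesis of \cref{cwmthm1}. Moment matching with $N(0,1)$ up to order $2k-1$ forces the corresponding cumulants to agree, and a Taylor expansion then gives
\[
    \log \E e^{t\xi} = \frac{t^2}{2} - c_2 t^{2k} + O(t^{2k+1}) \quad \text{as } t\to 0,
\]
with $c_2 = \lambda_\rho / (2k)! = H^{(2k)}(0)/(2k)! > 0$, matching the constant in \eqref{cwmt1-2}. Hence $H(t) \leq 2c_2 t^{2k}$ on some neighbourhood $|t|\leq \delta$, so $e^{-nt^2/2}(\E e^{t\xi})^n \geq e^{-2n c_2 t^{2k}}$ there. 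Restricting the Hubbard--Stratonovich integral to $|t|\leq \delta$ and substituting $u = t(2nc_2)^{1/(2k)}$ produces the lower bound $cn^{1/2-1/(2k)}$. The main obstacle is precisely this lower bound: it demands quantitative control of the Taylor remainder for $\log \E e^{t\xi}$ on a fixed neighbourhood of the origin, together with the verification that the dominant contribution to the integral indeed comes from the scale $|t|\lesssim n^{-1/(2k)}$.
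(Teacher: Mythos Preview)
Your proposal is correct and follows essentially the same route as the paper: both arguments rest on the Gaussian integral identity $e^{x^2/2}=(2\pi)^{-1/2}\int e^{tx-t^2/2}\,dt$, split the resulting integral at the threshold $b_0$ using \eqref{c1-01} for the upper bound in \eqref{cl2-1}, and invoke the type-$k$ hypothesis (not \eqref{c1-01} alone) via a Taylor expansion of $H$ for the lower bound. Your final worry about the Taylor remainder is not a real obstacle: since \eqref{c1-01} forces the MGF to be finite on all of $\R$, $H$ is analytic and $\sup_{|t|\le b_0}|H^{(2k+1)}(t)|<\infty$, which is exactly how the paper controls the remainder (with the explicit constant $C_\lambda=\lambda_\rho+b_0\sup_{|t|\le b_0}|H^{(2k+1)}(t)|$); the only difference between your write-up and the paper's is a harmless change of variable $t\mapsto t/\sqrt{n}$ in the Gaussian identity.
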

\begin{proof}
  Noting  that
  \begin{eqnarray*}
    e^{x^2/2} = \frac{1}{\sqrt{2\pi}}
    \int_{-\infty}^\infty e^{t x - t^2 /2  } dt,
  \end{eqnarray*}
we have
\begin{eqnarray}\label{cl2-a1}
    \lefteqn{ \sqrt{2\pi} \E \expb{\frac{1}{2n}( \xi_1 + \cdots +
   \xi_n)^2}  }\nonumber\\
   & = &
   \int_{-\infty}^{\infty} \E \expb{
   \frac{t}{\sqrt{n}}( \xi_1 + \cdots + \xi_n) -
 \frac{t^2}{2}} dt\nonumber\\
 & \le &  \int_{|t| \leq b_0 \sqrt{n}}
 e^{- b_1 t^{2k}/ n^{k-1}} dt  + \int_{ | t| >
 b_0 \sqrt{n}} e^{ - \frac{t^2}{2} ( 1 -
 \frac{1}{b_2} )} dt\nonumber\\
 & \leq & C n^{\frac{1}{2} -
 \frac{1}{2k}}
  \end{eqnarray}
  for some constant $C$.

  For the lower bound of  $\E e^{\frac{1}{2n}(\sum_{i=1}^n \xi_i)^2}$, as $\rho$ is of type $k$ with strength
$\lambda_\rho$, then by  the Taylor expansion, for $|t| \leq b_0$,
\begin{eqnarray*}
  \left| \frac{t^2}{2} - \log \E e^{t\xi} \right|
  \leq C_\lambda t^{2k},
\end{eqnarray*}
where $C_\lambda = \lambda_\rho + b_0 \sup_{|t|\leq b_0 } | H^{(2k + 1)} (t)|$ is a constant. Thus, for $|t| \leq
b_0$,
\begin{eqnarray*}
  \E e^{t\xi} \geq \expb{ \frac{t^2}{2} - C_\lambda
  t^{2k} }.
\end{eqnarray*}

Similar to \eqref{cl2-a1}, we have
\begin{eqnarray*}
    \sqrt{2\pi} \E e^{\frac{1}{2n} \big( \sum_{i=1}^n \xi_i \big) ^2 }  & \geq &  \E \int_{|t| \leq b_0\sqrt{n}} e^{\frac{t}{\sqrt{n}} (\xi_1+\dots+\xi_n) - \frac{t^2}{2}} dt \nonumber\\
    & \geq & c n^{\frac{1}{2} - \frac{1}{2k}}.
\end{eqnarray*}
This proves \eqref{cl2-1}.

Under condition \eqref{c1-00} and similar to \eqref{cl2-a1}, we have
\begin{eqnarray*}
    \E \exp \Big( \frac{\beta}{2n}(\xi_1+\dots + \xi_n)^2 \Big)  \leq C,
\end{eqnarray*}
and by the Jensen inequality,
\begin{eqnarray*}
    \E e^{\frac{\beta}{2n} \big( \sum_{i=1}^n \xi_i \big)^2 } & \geq &  e^{\frac{\beta}{2n} \ep{(\xi_1+\dots+ \xi_n)^2}} \nonumber\\
    &\geq & 1.
\end{eqnarray*}

This completes the proof of \eqref{cl2-0}.
\end{proof}

Let $X=(X_1,\cdots,X_n)$ be a random vector
following the Curie-Weiss distribution satisfying
\eqref{cwmpdf}. We have the following inequalities.

\begin{lemma}
    Under condition
    \eqref{c1-01}, we have
 \begin{eqnarray}
   \E \left( \frac{X_1+ \cdots +
   X_n}{n^{1-\frac{1}{2k}}} \right)^{2k} \leq C,
   \quad \beta=1, \label{cl3-1}
 \end{eqnarray}
 and under condition \eqref{c1-00}, we have
 \begin{eqnarray}
   \E \left( \frac{X_1+ \cdots +
   X_n}{\sqrt{n}} \right)^{2} \leq C,
   \quad
   0 < \beta < 1 . \label{cl3-2}
 \end{eqnarray}
  \label{cl-3}
\end{lemma}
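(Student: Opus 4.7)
My overall plan is to exploit the Hubbard--Stratonovich representation
\[ e^{\beta y^2/(2n)}=\sqrt{n/(2\pi\beta)}\int e^{-nt^2/(2\beta)+ty}\,dt, \]
applied with $y=S_\xi=\xi_1+\cdots+\xi_n$ for i.i.d.\ $\xi_i\sim\rho$, together with the identity $dP_{n,\beta}/d\rho^{\otimes n}=e^{\beta S_\xi^2/(2n)}/Z_n$, where $Z_n=\E[e^{\beta S_\xi^2/(2n)}]$. The bounds on $Z_n$ established in \cref{cl-2} then reduce both claims to Gaussian-weighted integrals in the dual variable $t$.

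For \eqref{cl3-2} (the subcritical case $0<\beta<1$) I first produce an exponential moment bound. Choosing any $\alpha\in(\beta,b)$, which exists by \eqref{c1-00}, the Hubbard--Stratonovich identity together with the sub-Gaussian bound $\E e^{t\xi}\le e^{t^2/(2b)}$ give
\[ \E\bigl[e^{\alpha S_\xi^2/(2n)}\bigr]\le \sqrt{n/(2\pi\alpha)}\int e^{-nt^2(1/\alpha-1/b)/2}\,dt=\sqrt{b/(b-\alpha)}. \]
Dividing by $Z_n\ge 1$ from \cref{cl-2} yields $\E[e^{(\alpha-\beta)S_n^2/(2n)}]\le C$ under $P_{n,\beta}$, and the elementary inequality $e^x\ge 1+x$ then extracts the desired second moment.

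For \eqref{cl3-1} (the critical case $\beta=1$) the exponential moment route collapses, since $\E[e^{\alpha S_\xi^2/(2n)}]$ grows exponentially in $n$ for any $\alpha>1$. Instead I use the identity $S_\xi^{2k}e^{tS_\xi}=\frac{d^{2k}}{dt^{2k}}e^{tS_\xi}$ to write
\[ \E\bigl[S_\xi^{2k}e^{S_\xi^2/(2n)}\bigr]=\sqrt{n/(2\pi)}\int e^{-nt^2/2}\,\frac{d^{2k}}{dt^{2k}}\bigl[(\E e^{t\xi})^n\bigr]\,dt, \]
and integrate by parts $2k$ times. Boundary terms vanish because $(\E e^{t\xi})^n\le e^{nt^2/(2b_2)}$ on $|t|>b_0$ with $b_2>1$, so $e^{-nt^2/2}$ dominates at $\pm\infty$. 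Moving the derivatives onto $e^{-nt^2/2}$ and substituting $u=t\sqrt n$ gives
\[ \E\bigl[S_\xi^{2k}e^{S_\xi^2/(2n)}\bigr]=\frac{n^k}{\sqrt{2\pi}}\int H_{2k}(u)\,e^{-u^2/2}\,(\E e^{u\xi/\sqrt n})^n\,du, \]
with $H_{2k}$ the Hermite polynomial of degree $2k$. Following the strategy of \cref{cl-2}, on $|u|\le b_0\sqrt n$ the local bound $\E e^{u\xi/\sqrt n}\le\exp(u^2/(2n)-b_1u^{2k}/n^k)$ cancels the $e^{-u^2/2}$, and the rescaling $v=u/n^{(k-1)/(2k)}$ applied to $|H_{2k}(u)|e^{-b_1 u^{2k}/n^{k-1}}$ produces the dominant contribution of order $n^{(k-1)(2k+1)/(2k)}=n^{k-1/2-1/(2k)}$, driven by the leading $u^{2k}$ coefficient of $H_{2k}$. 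On $|u|>b_0\sqrt n$ the residual weight $e^{-u^2(1-1/b_2)/2}$ yields only $O(e^{-cn})$. Combining, $\E[S_\xi^{2k}e^{S_\xi^2/(2n)}]\le C\,n^{2k-1/2-1/(2k)}$, and dividing by $n^{2k-1}\,Z_n$ while using $Z_n\ge c\,n^{1/2-1/(2k)}$ from \cref{cl-2} gives \eqref{cl3-1}.

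The main obstacle is the critical case: the $2k$-fold integration by parts (with boundary-term justification) together with the Laplace-type estimate on the Hermite-weighted integral must produce the exponent $n^{k-1/2-1/(2k)}$ \emph{exactly}, matching the order of $Z_n$ from \cref{cl-2}. Any slack, e.g.\ a $\log n$ or $n^\epsilon$ factor, would spoil the boundedness of the normalized $2k$-th moment.
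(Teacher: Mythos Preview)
Your proof is correct and takes a genuinely different route from the paper in both regimes.

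For the subcritical case $0<\beta<1$, the paper does not isolate an exponential moment; it reuses the same tail-integral machinery as in the critical case, writing $\E[M_n^{2}e^{\beta M_n^2/2}]$ as an integral of $xe^{\beta x^2/2}\P(|M_n|\ge x)$ and inserting the sub-Gaussian bound \eqref{cwmeq3} of \cref{cwmlem1}. Your one-line Hubbard--Stratonovich computation of $\E[e^{\alpha S_\xi^2/(2n)}]$ for $\alpha\in(\beta,b)$, combined with $Z_n\ge 1$, is shorter and in fact yields the stronger conclusion that $\E_{P_{n,\beta}}[e^{cS_n^2/n}]\le C$ for some $c>0$.

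For the critical case $\beta=1$, the paper writes
\[
\E\bigl[M_n^{2k}e^{M_n^2/2}\bigr]=\int_0^\infty\bigl(2kx^{2k-1}+x^{2k+1}\bigr)e^{x^2/2}\,\P(|M_n|\ge x)\,dx,
\]
substitutes the tail estimate \eqref{cl-1}, and then controls the delicate region $0\le x\le b_0\sqrt n$ by a dyadic decomposition over intervals $[j,j+1]$, extracting the decay $e^{-b_1 j^{2k}/n^{k-1}}$ and summing to obtain the order $n^{(k-1)(2k+1)/(2k)}$. Your approach---$2k$-fold integration by parts in the dual variable producing the Hermite weight $H_{2k}(u)e^{-u^2/2}$, followed by a single rescaling $v=u/n^{(k-1)/(2k)}$---bypasses the dyadic argument and lands on the same exponent directly. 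The paper's method is more modular (it reuses \cref{cwmlem1}) and stays on the probability side; yours is more analytic and requires the boundary-term check you outlined, which does go through since $\phi^{(j)}(t)=\E[\xi^j e^{t\xi}]$ inherits the sub-Gaussian growth $e^{t^2/(2b_2)+o(t^2)}$ from \eqref{c1-01} and hence every boundary term carries a factor $e^{-nt^2(1-1/b_2)/2}$. The two proofs are of comparable length once details are filled in.
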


\begin{proof}
    Let $M_n = \frac{1}{\sqrt{n}}(\xi_1+\dots+\xi_n)$ and $Z_n = \E e^{\frac{1}{2}M_n^2}$. For $\beta=1$ and when \eqref{c1-01} holds,
    by  \eqref{cwmpdf}, we have
    \begin{eqnarray}
        \ep{S_n^{2k}} &=&  \frac{n^{k}}{Z_n} \ep{ M_n^{2k} e^{\frac{1}{2} M_n^2} }\nonumber\\
        &=&\frac{ n^{k}}{Z_n} \int_{0}^{\infty} \big(2k x^{2k-1} + \frac{1}{2} x^{2k+1}\big) e^{\frac{1}{2}x^2} \P(|M_n| \geq x) dx \nonumber\\
        &=&\frac{ n^{k}}{Z_n} (I_1 + I_2), \label{cl3-01}
    \end{eqnarray}
    where
    \begin{eqnarray*}
        I_1 &=&  \int_0^{b_0 \sqrt{n}}  \big(2k x^{2k-1} + \frac{1}{2} x^{2k+1}\big) e^{\frac{1}{2}x^2} \P(|M_n| \geq x) dx , \nonumber\\
        I_2 &=& \int_{b_0 \sqrt{n}}^\infty  \big(2k x^{2k-1} + \frac{1}{2} x^{2k+1}\big) e^{\frac{1}{2}x^2} \P(|M_n| \geq x) dx .
    \end{eqnarray*}
    For $I_1$, letting $D_n = [b_0 \sqrt{n}]+1$, where $[a]$ is the integer part of $a$, we have
    \begin{eqnarray*}
        I_1 &\leq & \sum_{j=0}^{D_n} \int_j^{j+1}  \big(2k x^{2k-1} + \frac{1}{2} x^{2k+1}\big) e^{\frac{1}{2}x^2} \P(|M_n| \geq x) dx \nonumber\nonumber\\
        & \leq & C \Big(  1 + \sum_{j=1}^{D_n} j^{2k+1} \int_j^{j+1} e^{\frac{1}{2} x^2 - jx + jx } \P(|M_n| \geq x) dx \Big)\nonumber\\
        & \leq & C \Big(  1 + \sum_{j=1}^{D_n} j^{2k+1} e^{-\frac{j^2}{2}} \int_j^{j+1} e^{jx} \P(| M_n|\geq x) dx \Big)\nonumber\\
        & \leq &  C \Big( 1 + \sum_{j=1}^{D_n} j^{2k+1} e^{-\frac{j^2}{2}} \int_j^{j+1} e^{jx - \frac{x^2}{2} - \frac{b_1 x^{2k}}{n^{k-1}}}dx \Big)\quad \text{by \eqref{cl-1}} \nonumber\nonumber\\
        & \leq & C \Big( 1 + \sum_{j=1}^{D_n} j^{2k+1} e^{-\frac{ b_1 j^{2k}}{n^{k-1}}} dx \Big)\nonumber\\
        & \leq & C \Big( 1 + n^{(2k+1)(k-1)/2k} \Big) .
    \end{eqnarray*}
    A similar argument can be made for $I_2$. By \eqref{cl3-01} and \eqref{cl2-1}, we have
    \begin{eqnarray}\label{cl3-02}
        \ep{S_n^{2k}} \leq C n^{2k-1}.
    \end{eqnarray}
    This completes the proof of \eqref{cl3-1}.
  A similar argument holds for \eqref{cl3-2}.
  This completes the proof of \cref{cl-3}.
\end{proof}

\begin{lemma}
  For $0 < \beta \leq 1$, there exists a constant
  $b_3>\beta$ such that
  \begin{eqnarray}\label{cl4-1}
      \E e^{b_3 \xi^2 /2} \leq C.
  \end{eqnarray}
  \label{cl-4}
\end{lemma}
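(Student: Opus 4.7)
The natural tool is the Gaussian representation
\[
    e^{x^2/2} = \frac{1}{\sqrt{2\pi}} \int_{-\infty}^\infty e^{tx - t^2/2}\, dt,
\]
applied to $x = \sqrt{b_3}\,\xi$. After using Fubini to interchange $\E$ and $\int$, and then substituting $s = \sqrt{b_3}\, t$, one gets
\[
    \E e^{b_3 \xi^2 /2} = \frac{1}{\sqrt{2\pi b_3}} \int_{-\infty}^\infty \E e^{s\xi}\, e^{-s^2/(2b_3)}\, ds.
\]
So the whole question reduces to whether one can pick $b_3 > \beta$ for which this last integral is finite, and the assumption \eqref{c1-00} or \eqref{c1-01} will be used to bound the m.g.f. $\E e^{s\xi}$ under the integral sign.

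For $0 < \beta < 1$, condition \eqref{c1-00} gives $\E e^{s\xi} \leq e^{s^2/(2b)}$ globally, so the integrand is bounded by $\exp\{ -\frac{s^2}{2}(\frac{1}{b_3} - \frac{1}{b})\}$. Since $b > \beta$, I would choose any $b_3$ with $\beta < b_3 < b$; then the exponent is a negative multiple of $s^2$ and the integral is explicitly a Gaussian integral, hence finite.

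For $\beta = 1$, I would split the $s$-integral at $|s| = b_0$ and use \eqref{c1-01} on each piece. On $|s| \leq b_0$ the integrand is $\exp\{ \frac{s^2}{2} - b_1 s^{2k} - \frac{s^2}{2 b_3}\}$, a continuous function on a compact interval, whose integral is automatically finite regardless of $b_3$. On $|s| > b_0$ the integrand is at most $\exp\{ -\frac{s^2}{2}(\frac{1}{b_3} - \frac{1}{b_2})\}$, and this tail is Gaussian-integrable exactly when $b_3 < b_2$. Since $b_2 > 1 = \beta$, I can pick any $b_3$ with $1 < b_3 < b_2$, and the proof is complete.

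No genuine obstacle is anticipated: the argument is a clean application of the Gaussian identity plus the hypothesized m.g.f. bounds. The only care needed is (i) justifying Fubini (the integrand is non-negative, so Tonelli applies immediately), and (ii) verifying that the admissible window for $b_3$ is non-empty in each regime, which reduces to the strict inequalities $b > \beta$ and $b_2 > 1$ built into the hypotheses.
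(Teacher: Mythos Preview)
Your proposal is correct and follows essentially the same approach as the paper: both use the Gaussian integral identity to write $\E e^{b_3\xi^2/2}=\frac{1}{\sqrt{2\pi b_3}}\int \E e^{s\xi}e^{-s^2/(2b_3)}\,ds$, then bound the moment generating function via \eqref{c1-00} or \eqref{c1-01} and choose $b_3\in(\beta,b)$ or $b_3\in(1,b_2)$ accordingly. Your explicit mention of Tonelli and the compactness argument for the $|s|\le b_0$ piece are slight elaborations, but the route is identical.
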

\begin{proof}
  When $0 < \beta < 1$, we choose $b_3$ such that
  $\beta <  b_3 < b$;  then,
  \begin{eqnarray*}
    \E e^{b_3 \xi^2/2} &=&
    \frac{1}{\sqrt{2\pi b_3}}
    \int_{-\infty}^{\infty} \E e^{t \xi - t^2/(2b_3)}
    dt\\
    & \le & \frac{1}{ \sqrt{2 \pi b_3}}  \int_{-\infty}^{\infty}
    e^{-\frac{t^2}{2}(\frac{1}{b_3} -
    \frac{1}{b})} dt\\
    & \le & C.
  \end{eqnarray*}
  When $\beta = 1$, we choose $b_3$ such that
  $1 <  b_3 < b_2$. Then, 
  \begin{eqnarray*}
    \E e^{b_3 \xi^2/2} &=&
    \frac{1}{\sqrt{2\pi b_3}}
    \int_{-\infty}^{\infty} \E e^{t \xi - t^2/2b_3}
    dt\\
    & \le &  \frac{1}{\sqrt{2\pi b_3}}
    \int_{|t| \leq b_0 } \expb{
    \frac{t^2}{2 } - b_1 t^4 -
  \frac{t^2}{2b_3} } dt \\
&& + \frac{1}{\sqrt{2\pi b_3}}
  \int_{|t| > b_0} \expb{ \frac{t^2}{2b_2} -
  \frac{t^2}{ 2b_3}  } dt \\
  & \le &  C.
  \end{eqnarray*}
  This proves \eqref{cl4-1}.
\end{proof}

\def\s{\bar{X}}
Let $\s_i = \frac{1}{n} (S_n - X_i)$.
\begin{lemma}
  For $0 < \beta \leq 1$, and for   $r \geq 1$, we have
  \begin{eqnarray}\label{cl5-1}
    \conep{|X_i|^r}{\s_i} \le C
    e^{\beta \s_i^2}.
  \end{eqnarray}
  \label{cl-5}
\end{lemma}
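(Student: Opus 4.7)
The plan is to compute the conditional distribution of $X_i$ given $\{X_j : j\neq i\}$ explicitly from the joint density \eqref{cwmpdf} and then to bound the resulting $r$-th absolute moment directly. Completing the square $\beta S_n^2/(2n) = \beta n \bar{X}_i^2/2 + \beta \bar{X}_i X_i + \beta X_i^2/(2n)$ shows that the conditional law of $X_i$ given $\{X_j : j \neq i\}$ is absolutely continuous with respect to $\rho$, with density proportional to $\exp(\beta \bar{X}_i x + \beta x^2/(2n))$; the factor $\exp(\beta n \bar{X}_i^2/2)$ is $\bar{X}_i$-measurable and cancels in the normalization. Since this density depends on $\{X_j : j \neq i\}$ only through $\bar{X}_i$, conditioning on the full vector is the same as conditioning on $\bar{X}_i$, and for $\xi \sim \rho$ one obtains
\[
    \conep{|X_i|^r}{\bar{X}_i} = \frac{\E\bigl[|\xi|^r \exp(\beta \bar{X}_i \xi + \beta \xi^2/(2n))\bigr]}{\E\bigl[\exp(\beta \bar{X}_i \xi + \beta \xi^2/(2n))\bigr]}.
\]

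Next I would bound the two expectations separately. For the denominator, Jensen's inequality together with $\E[\xi]=0$ and $\E[\xi^2]=1$ from \eqref{cwmc0} yields $\E\bigl[\exp(\beta \bar{X}_i \xi + \beta \xi^2/(2n))\bigr] \ge e^{\beta/(2n)} \ge 1$. For the numerator, Young's inequality $\beta \bar{X}_i \xi \le \beta \bar{X}_i^2/2 + \beta \xi^2/2$ decouples $\bar{X}_i$ from $\xi$ and produces the upper bound $e^{\beta \bar{X}_i^2/2}\,\E\bigl[|\xi|^r \exp(\beta(1+1/n)\xi^2/2)\bigr]$.

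The main (and essentially only) obstacle is to ensure that the residual expectation $\E[|\xi|^r \exp(\beta(1+1/n)\xi^2/2)]$ is bounded uniformly in $n$. Using the value of $b_3$ furnished by \cref{cl-4} (with $\beta < b_3 < b$ when $\beta < 1$, and $1 < b_3 < b_2$ when $\beta = 1$), one has $\beta(1+1/n) < b_3$ for all sufficiently large $n$; combining with the elementary bound $|\xi|^r \le C_{r,\delta} e^{\delta \xi^2/2}$ for an arbitrarily small $\delta > 0$, the integrand is dominated by $C_{r,\delta} e^{b_3 \xi^2/2}$, whose expectation is finite by \cref{cl-4}. The finitely many small values of $n$ are absorbed into the constant. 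Since $e^{\beta \bar{X}_i^2/2} \le e^{\beta \bar{X}_i^2}$, this yields \eqref{cl5-1}. The crux of the proof is the strict inequality $\beta < b$ (resp.\ $1 < b_2$) built into the tail assumptions \eqref{c1-00} and \eqref{c1-01}, which leaves just enough room for a sub-Gaussian estimate with a constant slightly larger than $\beta$.
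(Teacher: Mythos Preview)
Your proof is correct and follows essentially the same strategy as the paper: write the conditional expectation as a ratio involving a fresh $\xi\sim\rho$, bound the numerator via Young's inequality $\beta\bar X_i\xi\le \beta\bar X_i^2/2+\beta\xi^2/2$, and control the residual moment $\E\bigl[|\xi|^r e^{\beta(1+1/n)\xi^2/2}\bigr]$ using the sub-Gaussian room provided by \cref{cl-4}. The one genuine difference is your treatment of the denominator. The paper drops the factor $e^{\beta\xi^2/(2n)}\ge 1$, then applies the reverse Young inequality $\beta\bar X_i\xi\ge -\beta\bar X_i^2/2-\beta\xi^2/2$ together with Jensen on $e^{-\beta\xi^2/2}$, arriving at the lower bound $e^{-\beta/2}e^{-\beta\bar X_i^2/2}$; combining with the numerator produces exactly $Ce^{\beta\bar X_i^2}$. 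Your direct use of Jensen with $\E[\xi]=0$, $\E[\xi^2]=1$ gives the cleaner lower bound $1$, and hence in fact the sharper estimate $Ce^{\beta\bar X_i^2/2}$, which you then relax to match the stated bound. Both arguments share the same implicit caveat that $\beta(1+1/n)<b_3$ requires $n$ to be sufficiently large, which is harmless in this asymptotic setting.
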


\begin{proof}
  Let $\xi$ be a random  variable with the  probability
  measure $\rho$ independent of $\s_i$. Then,
  \begin{eqnarray*}
    {  \conep{|X_i|^r}{\s_i}  }
  &=& \frac{ \conep{|\xi|^r e^{
\frac{ \beta \xi^2}{2n} + \beta \s_i \xi }
}{\bar{X}_i} } {
\conep{ e^{ \frac{ \beta \xi^2}{2n} + \beta \s_i
\xi}}{\bar{X}_i}  },
  \end{eqnarray*}
  and
  \begin{eqnarray}
    {  \conep{  e^{ \frac{ \beta \xi^2}{2n} + \beta
          \s_i
    \xi}}{\s_i} }  \nonumber
    & \ge & \conep{ e^{\beta \s_i \xi}}{\s_i}\\
    & \ge & e^{-\frac{ \beta \s_i^2}{2}}
    \nonumber
    \ep{ e^{- \beta \xi^2/2}}\\
    & \geq & e^{-\frac{\beta \s_i^2}{2}}
    e^{-\beta \ep{\xi^2}/2}\nonumber\\
    & \ge & e^{-\beta/2} e^{-\beta \s_i^2/2}.
    \label{cl-5-1}
  \end{eqnarray}
  By \cref{cl-4}, given  
  $t = b_3 y$, where $b_3$ depends on $\beta, b$ and $b_2$, we have
  \begin{eqnarray*}
    {\P(|\xi| \ge y)  }
    & \le &   e^{-t y } \ep{ e^{t |\xi|}}\\
    & \le & e^{- ty } \ep{ e^{
  \frac{ b_3\xi^2}{2} + \frac{ t^2}{2b_3}}}\\
  & \le & C e^{-b_3y^2/2}.
  \end{eqnarray*}

  Therefore,
  \begin{eqnarray*}
    {  \ep{ |\xi|^r e^{
    \frac{\beta\xi^2}{2n} + \frac{\beta \xi^2}{2}
    } }  }\nonumber
    & \le  & \int_0^{\infty} (ry^{r-1}+ 2\beta
    y^{r+1}) e^{\beta y^2 (1+1/n)/2} \P(|\xi| \ge
    y) dy\\
    & \le & C  \int_0^{\infty} (ry^{r-1}+ 2\beta
    y^{r+1}) e^{\beta y^2 (1+1/n)/2 - b_3y^2 /2} d
    y \nonumber\\
    & \le & C ,\label{l51-0}
  \end{eqnarray*}
 and by the Cauchy inequality,
  \begin{eqnarray}
    {  \conep{   |\xi|^r e^{ \frac{ \beta \xi^2}{2n}
          + \beta \s_i
    \xi}}{\s_i} }   \nonumber
    & \le & e^{ \beta \s_i^2/2} \ep{|\xi|^r
    e^{\frac{ \beta \xi^2}{2n} +
  \frac{ \beta \xi^2}{2} } }\nonumber\\
  & \le & C e^{\beta \s_i^2/2}. \label{cl-5-2}
  \end{eqnarray}
  This completes the proof of \eqref{cl5-1}.
\end{proof}
\begin{lemma}
If $0 < \beta < 1$ and \eqref{c1-00} is satisfied,
  then
  for  $ r> 0 $ and $ \theta > 0  $, we have
  \begin{eqnarray}
    \ep{ |\bar{X}_i|^r e^{ \theta \bar{X}_i^2} }
    \leq C n^{-r/2}.\label{cl-6-1}
  \end{eqnarray}
  If $\beta = 1$ and \eqref{c1-01} is satisfied,
  then for   $r \geq 0$ and $\theta>0$, we have
  \begin{eqnarray}
    \ep{ |\bar{X}_i|^r e^{ \theta \bar{X}_i^2} }
    \leq C  n^{- \frac{r}{2k}}.\label{cl-6-2}
  \end{eqnarray}
  \label{cl-6}
\end{lemma}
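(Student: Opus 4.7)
The plan is to reduce $\bar X_i$ to $\bar X = S_n/n$, derive a Curie--Weiss tail estimate for $\bar X$ from Lemma~cl-1 together with the lower bound on $Z_n$ from Lemma~cl-2, and then integrate $|\bar X|^r e^{\theta\bar X^2}$ against this tail. Since $\bar X_i = \bar X - X_i/n$, the elementary inequalities
\[
|\bar X_i|^r \leq C_r\bigl(|\bar X|^r + n^{-r}|X_i|^r\bigr), \qquad e^{\theta\bar X_i^2} \leq e^{2\theta\bar X^2}\, e^{2\theta X_i^2/n^2}
\]
split the expectation into a ``main'' piece in $\bar X$ and a ``correction'' piece containing $X_i$. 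The correction is handled by conditioning on $\bar X_i$ and invoking Lemma~cl-5 (for $n$ large the factor $e^{2\theta X_i^2/n^2}$ can be absorbed into the cl-4/cl-5 exponent by slightly adjusting the constant $b_3$), and it contributes at most $Cn^{-r}\E[e^{c\bar X^2}]$, which is $o(n^{-r/2})$ and $o(n^{-r/(2k)})$ respectively. Thus the task reduces to bounding $\E_{\P_{n,\beta}}[|\bar X|^r e^{\theta'\bar X^2}]$ for a slightly enlarged constant $\theta' > 2\theta$.

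For the main term, writing the Curie--Weiss density and integrating by parts against the product-measure tail yields
\[
\E_{\rho^{\otimes n}}\bigl[\mathbb{1}_{\{|\bar X|>t\}} e^{\beta n \bar X^2/2}\bigr] = e^{\beta n t^2/2}\,\P_{\rho^{\otimes n}}(|\bar X|>t) + \int_t^\infty \beta n y\, e^{\beta n y^2/2}\,\P_{\rho^{\otimes n}}(|\bar X|>y)\, dy.
\]
Plugging in Lemma~cl-1 and dividing by $Z_n \geq 1$ in the subcritical case (respectively $Z_n \geq c\,n^{1/2-1/(2k)}$ in the critical case, using Lemma~cl-2) gives the Curie--Weiss tail estimate
\[
\P_{\P_{n,\beta}}(|\bar X|>t) \leq C\, e^{-(b-\beta)nt^2/2} \qquad (\beta < 1)
\]
and its critical-case analogue, in which the Gaussian factors cancel and the surviving exponent is $-b_1 n t^{2k}$ on $\{t \leq b_0\}$, plus an exponentially small contribution from $\{t > b_0\}$. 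Representing the target as
\[
\E[|\bar X|^r e^{\theta'\bar X^2}] = \int_0^\infty \bigl(r t^{r-1} + 2\theta' t^{r+1}\bigr)\, e^{\theta' t^2}\, \P(|\bar X|>t)\, dt,
\]
the substitution $t = u/\sqrt n$ turns the subcritical case into a convergent Gaussian integral with prefactor $n^{-r/2}$, while $t = u\,n^{-1/(2k)}$ turns $-b_1 n t^{2k}$ into $-b_1 u^{2k}$ and $\theta' t^2$ into $\theta' u^2 n^{-1/k} = o(1)$, giving a convergent $2k$-th order integral with prefactor $n^{-r/(2k)}$.

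The main obstacle is the critical case: at $\beta = 1$ the Gaussian weight $e^{S_n^2/(2n)}$ in $\P_{n,1}$ exactly cancels the Gaussian mass of $S_n$ under $\rho^{\otimes n}$, so all decay must come from the weaker $b_1 z^{2k}/n^{2k-1}$ correction provided by \eqref{c1-01}. This forces a split of the $y$-integral at $y = b_0$ mirroring the proof of Lemma~cl-2: the inner region $y \leq b_0$ uses the $2k$-th order term to supply the $n^{-1/(2k)}$ scale, while the outer region $y > b_0$ relies on the improved constant $b_2 > 1$ to keep the integrand integrable. Matching the resulting upper bound against the sharp lower bound $Z_n \geq c\,n^{1/2-1/(2k)}$ from \eqref{cl2-1} is what ultimately produces the correct $n^{-r/(2k)}$ prefactor.
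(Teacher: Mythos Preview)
Your approach is correct in outline and relies on the same ingredients (Lemmas~\ref{cwmlem1}, \ref{cl-2}, \ref{cl-4}) as the paper, but the organization differs. You first reduce $\bar X_i$ to $\bar X$ and then derive a Curie--Weiss tail bound for $\bar X$ before integrating. The paper bypasses both steps: it writes the Curie--Weiss expectation directly as a product-measure expectation divided by $Z_n$,
\[
\E\bigl[|\bar X_i|^r e^{\theta\bar X_i^2}\bigr] = \frac{1}{n^r Z_n}\,\E\Bigl[|\xi_2+\cdots+\xi_n|^r\, e^{\frac{\beta}{2n}(\xi_1+\cdots+\xi_n)^2 + \frac{\theta}{n^2}(\xi_2+\cdots+\xi_n)^2}\Bigr],
\]
then separates the single coordinate $\xi_1$ via the elementary inequality $(\xi_1+m)^2 \leq (1+n)\xi_1^2 + (1+1/n)m^2$, bounds the resulting $\xi_1$-factor $\E e^{\frac{\beta}{2}(1+1/n)\xi_1^2}$ by Lemma~\ref{cl-4}, and integrates the remaining partial-sum factor against the product-measure tail from Lemma~\ref{cwmlem1}, exactly as in the proof of Lemma~\ref{cl-3}. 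This yields the claim for $r\geq 2$; the range $0<r<2$ is then obtained by Cauchy--Schwarz, and $r=0$ (critical case only) by a direct \ref{cl-2}-type estimate.

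The paper's route is a bit more direct: your detour through $\bar X$ introduces cross-terms like $e^{2\theta X_i^2/n^2}$ that still couple $X_i$ with $\bar X$ and require an extra H\"older or conditioning step to separate, whereas working with $\bar X_i$ at the product-measure level from the outset keeps the single-site and partial-sum factors decoupled. On the other hand, your Curie--Weiss tail bound for $\bar X$ is a useful statement in its own right (and your sketch of it in the critical case, including the split at $b_0$ and the matching against the lower bound on $Z_n$, is essentially what is implicit in the proof of Lemma~\ref{cl-3}); once one has it, the lemma follows from it together with Lemma~\ref{cl-5} just as you describe.
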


\begin{proof}
    Without loss of generality, assume $i=1$. Observe that
\begin{eqnarray*}
  \ep{ |\bar{X}_1|^r e^{ \theta \bar{X}_1^2} }
  &=& \frac{1}{ n^{r} Z_n} \E| \xi_2 + \cdots +
  \xi_n|^r e^{ \frac{\beta}{2n} ( \xi_1 + \cdots
    + \xi_n)^2  +
    \frac{ \theta}{n^2}
  ( \xi_2 + \cdots + \xi_n)^2   }\\
  & \le &  \frac{1}{n^{r} Z_n} \E e^{
  \frac{\beta}{2}(1+1/n) \xi_1^2 } \E| \xi_2 +
  \cdots + \xi_n |^r e^{ (\frac{\beta}{2n} +
  \frac{\theta + \beta}{n^2})( \xi_2 + \cdots +
\xi_n)^2 }.
\end{eqnarray*}

When $0 < \beta< 1$ and \eqref{c1-00} is satisfied, by \eqref{cl2-0}, we have $Z_n \geq 1$. Also,  similar to \eqref{cwmeq3},
\begin{eqnarray*}
  \P( |\xi_2 + \cdots + \xi_n| > y) \leq 2
  e^{- {b y^2 \over 2(n-1)}}.
\end{eqnarray*}
Thus, for $r \geq 2$,
\begin{eqnarray*}
  \lefteqn{  \E| \xi_2 + \cdots +
  \xi_n|^r e^{ (\frac{\beta}{2n} +
    \frac{ \theta+\beta}{n^2})
  ( \xi_2 + \cdots + \xi_n)^2   }  }\\
  &\leq & C \int_0^\infty (r y^{r-1} + (\beta
  n^{-1} + 2(\theta+\beta) n^{-2}) y^{r+1} ) e^{ (\frac{\beta}{2n} +
  \frac{ \theta+\beta}{n^2})y^2 - \frac{b}{2(n-1)}y^2  } dy \\
  & \le & C n^{r/2}.
\end{eqnarray*}
This proves \eqref{cl-6-1}.
Similarly, following the proof of
\eqref{cl3-1}, \eqref{cl-6-2} holds for $r\geq 2$.
When $r = 0$, similar to \cref{cl-2}, we have
\begin{eqnarray*}
    \ep{ e^{\theta \s_i^2} } \leq C.
\end{eqnarray*}
By the Cauchy inequality, \eqref{cl-6-1} and \eqref{cl-6-2} hold for $0 < r < 2.$
This completes the proof of \cref{cl-6}.
\end{proof}

\begin{lemma}
  For each $1\leq i < j  \leq n$, we have
  \begin{eqnarray}\label{cl7-1}
    \lefteqn{| \ep{ (X_i^2 - 1 )(X_j ^2 -1) } | }\nonumber \\ & \leq &
    \begin{cases}
        C n^{-1}, &0< \beta < 1, \text{ under } \eqref{c1-00},\\
        C n^{-1/k}, & \beta = 1, \text{ under } \eqref{c1-01}.
    \end{cases}
  \end{eqnarray}
  \label{cl-7}
\end{lemma}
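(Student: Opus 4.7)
The plan is to apply the Hubbard--Stratonovich transform to decouple the $X_i$'s, reducing the problem to a one-dimensional integral. By the exchangeability of $P_{n,\beta}$ it suffices to treat $i=1,\,j=2$. The key identity is
\begin{eqnarray*}
\exp\Big(\frac{\beta S_n^2}{2n}\Big) = \frac{1}{\sqrt{2\pi}} \int_{-\infty}^\infty \exp\Big(\sqrt{\tfrac{\beta}{n}}\, t\, S_n - \frac{t^2}{2}\Big) dt,
\end{eqnarray*}
which, after inserting into the Gibbs expectation and applying Fubini, gives
\begin{eqnarray*}
\E[(X_1^2-1)(X_2^2-1)] = \frac{1}{Z_n\sqrt{2\pi}} \int_{-\infty}^\infty e^{-t^2/2}\, M(s_t)^n\, (m_2(s_t) - 1)^2 \, dt,
\end{eqnarray*}
where $s_t = \sqrt{\beta/n}\, t$, $M(s) = \E e^{s\xi}$, and $m_2(s) = \E[\xi^2 e^{s\xi}]/M(s)$. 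Under the tilt with parameter $s_t$ the coordinates become i.i.d., so the second factor factorizes as $(m_2(s_t)-1)^2$.

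Next I would expand $m_2(s) - 1$ near $s=0$. Writing $\psi = \log M$ one has $m_2(s) = \psi''(s) + \psi'(s)^2$, hence $m_2(0) = 1$. For $0 < \beta < 1$ I only need $|m_2(s)-1| \le C|s|$ on a neighbourhood of the origin, which follows from $\E \xi^2 = 1$ and smoothness of $M$. For $\beta = 1$ with $\rho$ of type $k$, the first $2k-1$ cumulants of $\rho$ agree with those of the standard normal, so $\psi'(s) - s = O(s^{2k-1})$ and $\psi''(s) - 1 = O(s^{2k-2})$; since $k \ge 2$, this yields the stronger estimate $|m_2(s) - 1| \le C s^2$ near $0$.

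Finally I would perform a Laplace-type estimate of the $t$-integral. For $0 < \beta < 1$, \eqref{c1-00} gives
\begin{eqnarray*}
e^{-t^2/2} M(s_t)^n \le \exp\Bigl(-\tfrac{1}{2}(1-\beta/b)t^2\Bigr),
\end{eqnarray*}
so $t$ concentrates on $O(1)$; in this region $s_t = O(n^{-1/2})$ and $(m_2(s_t)-1)^2 \le C t^2/n$. Using $Z_n \ge 1$ from \eqref{cl2-0} and integrating the Gaussian weight then produces a bound of order $n^{-1}$. For $\beta = 1$, the substitution $t = n^{\,1/2 - 1/(2k)} u$ combined with \eqref{c1-01} shows that the effective range of integration is $|s_t| \lesssim n^{-1/(2k)}$ on which $(m_2(s_t)-1)^2 \le C n^{-2/k}$; dividing by $Z_n \asymp n^{\,1/2 - 1/(2k)}$ from \cref{cl-2} gives a bound of order $n^{-2/k}$, which is at most the required $n^{-1/k}$.

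The main obstacle is handling the tail of the $t$-integral where the Taylor expansion of $m_2$ is no longer useful. I would dispatch this by combining the crude bound $|m_2(s)-1| \le 1+m_2(s)$, the moment estimates of \cref{cl-4} and \cref{cl-6}, and the exponential decay in $t$ provided by \eqref{c1-00}/\eqref{c1-01}, showing that the tail contribution is exponentially small relative to the bulk estimate and therefore negligible in the final bound.
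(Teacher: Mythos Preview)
Your approach is correct but genuinely different from the paper's. You apply the Hubbard--Stratonovich transform to the \emph{entire} sum $S_n$, obtaining the closed-form integral
\[
\E[(X_1^2-1)(X_2^2-1)]=\frac{1}{Z_n\sqrt{2\pi}}\int e^{-t^2/2}M(s_t)^n\bigl(m_2(s_t)-1\bigr)^2\,dt,
\]
and then run a Laplace-type analysis in $t$. The paper instead conditions on $m_{12}=\xi_3+\cdots+\xi_n$, applies the transform only to the quadratic $(\xi_1+\xi_2)^2/(2n)$, bounds $|\E[(\xi^2-1)e^{a\xi}]|\le C|a|e^{Ca^2}$ globally via \cref{cl-4}, integrates out $t$, and then invokes the moment estimates of \cref{cl-3}/\cref{cl-6} for $m_{12}/n$. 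Your route is cleaner and, because you exploit $m_2'(0)=\E\xi^3=0$ (which holds for type $k\ge2$) to get $|m_2(s)-1|\le Cs^2$, it actually delivers $O(n^{-2/k})$ for $\beta=1$, sharper than the $O(n^{-1/k})$ the paper states. The paper's route avoids the bulk/tail splitting and the change of variables, trading sharpness for robustness: its single global mean-value bound works uniformly in the tilt parameter.

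One small correction: for the tail of your $t$-integral the relevant input is \cref{cl-4} (which yields $|\E[(\xi^2-1)e^{s\xi}]|\le C|s|e^{s^2/(2b_3)}$ for all $s$), not \cref{cl-6}, which concerns Gibbs moments of $\bar X_i$. With that global bound in hand, the tail is indeed exponentially small in $n$ and your sketch goes through.
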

\begin{proof}
    We consider $i=1,j=2$ only. Note that
  \begin{eqnarray*}
    \ep{(X_1^2 - 1)(X_2^2-1)} =
    \frac{1}{Z_n} \E (\xi_1^2 -1)(\xi_2^2 - 1)
    \expb{ \frac{\beta}{2n} ( \xi_1 + \cdots +
    \xi_n)^2 }.
  \end{eqnarray*}
  Set $m_{12}  = \xi_3 + \cdots + \xi_n$.
  We first calculate the conditional expectation
  given $\xi_3,\cdots,\xi_n$. In fact,
  for any $s$, we have
  \begin{eqnarray*}
    \lefteqn{  \E (\xi_1^2 - 1)(\xi_2^2-1)
    e^{\frac{\beta}{2n}(\xi_1+\xi_2)^2 +
  \frac{\beta}{n}(\xi_1 + \xi_2)s}  }\\
  &=& \frac{\sqrt{\beta}}{\sqrt{2\pi}}
  \int_{-\infty}^\infty \E (\xi_1^2 -
  1)(\xi_2^2-1) \expb{ \frac{\beta
  t}{\sqrt{n}}(\xi_1 + \xi_2) +
\frac{\beta s}{n}(\xi_1 + \xi_2) -
\frac{\beta t^2 }{2} } dt\\
&=& \frac{\sqrt{\beta}}{\sqrt{2\pi}}
\int_{-\infty}^\infty \left( \ep{ (\xi_1^2 - 1)
e^{(\frac{\beta t }{\sqrt{n} } +
\frac{\beta s}{n})\xi_1 } } \right)^2
e^{-\beta t^2/2} dt.
  \end{eqnarray*}

  Observe that
  \begin{eqnarray}\label{cl7-01}
    \lefteqn{  \left| \E (\xi_1^2 - 1)
    \expb{\frac{\beta t}{\sqrt{n}} \xi_1 +
  \frac{\beta s}{n } \xi_1  } \right|  }\nonumber\\
  & \le & \left( \frac{\beta t}{\sqrt{n}} +
  \frac{\beta s}{n} \right) \E (|\xi_1|^3 +
  |\xi_1|)  \expb{\frac{\beta t}{\sqrt{n}}| \xi_1| +
  \frac{\beta s}{n } | \xi_1|  }\nonumber\\
  & \le &   \left( \frac{\beta t}{\sqrt{n}} +
  \frac{\beta s}{n} \right) e^{\beta s^2 /(2n^2) +
  \beta t^2 /(2\sqrt{n})} \E(|\xi_1|^3 + |\xi_1| )
  e^{ \frac{\beta \xi_1^2}{2}(1 +
  \frac{1}{\sqrt{n}}) } \nonumber\\
  & \le &  C \left( \frac{\beta t}{\sqrt{n}} +
  \frac{\beta s}{n} \right) e^{\beta s^2 /(2n^2) +
  \beta t^2 /(2\sqrt{n}) }.
  \end{eqnarray}
  Therefore,
  \begin{eqnarray*}
    \lefteqn{  \Big| \E (\xi_1^2 - 1)(\xi_2^2-1)
    e^{\frac{\beta}{2n}(\xi_1+\xi_2)^2 +
  \frac{\beta}{n}(\xi_1 + \xi_2)s}   \Big| }\\
  & \le &  C \int_{-\infty}^{\infty} \left(
  \frac{t^2}{n} + \frac{s^2}{n^2}   \right)
  \expb{ \frac{\beta t^2 }{\sqrt{n}} +
  \frac{ \beta s^2}{n^2} - \frac{\beta t^2}{2} }
  dt\\
  & \le &  C \left( \frac{1}{n} +
  \frac{s^2}{n^2} \right ) e^{\beta s^2 /n^2}.
  \end{eqnarray*}
 Hence,
 \begin{eqnarray*}
     \left| \ep{(X_1^2-1)(X_2^2-1)}\right| \leq C \E
   \left( \frac{1}{n} +
   \frac{m_{12}^2}{n^2} \right ) e^{\beta
   m_{12}^2 /n^2 + {\beta m_{12}^2}/{(2n)}}.
 \end{eqnarray*}
 Similar to the proofs of \cref{cl-3,cl-6},  
 for $0 < \beta < 1$,
 \begin{eqnarray*}
   \E
   \left( \frac{1}{n} +
   \frac{m_{12}^2}{n^2} \right ) e^{\beta
   m_{12}^2 /n^2 + {\beta m_{12}^2}/{(2n)}} \leq C
   n^{-1},
 \end{eqnarray*}
 and for  $\beta = 1$,
 \begin{eqnarray*}
    \E
    \left( \frac{1}{n} +
   \frac{m_{12}^2}{n^2} \right ) e^{\beta
   m_{12}^2 /n^2 + {\beta m_{12}^2}/{(2n)}} \leq C
   n^{-1/k}.
 \end{eqnarray*}
 This completes the proof of \eqref{cl7-1}.
\end{proof}

For $1 \leq i \leq n$, let $\X =
\sigma(X_1,\dots,X_n)$, and
\begin{eqnarray*}
    Q_i = \conep{ (X_i - X_i')|X_i - X_i'| }{\X}.
\end{eqnarray*}
As defined at the beginning of this subsection, given $\left\{ X_j, j\neq i \right\},$ $X_i'$ and $X_i$ are conditionally independent and have the same distribution.
\begin{lemma}
    For $0 < \beta \leq 1$, we have
  \begin{eqnarray}
      \ep{Q_i^2} & \leq &  C,\label{cl8-0}\\
    |\ep{Q_i Q_j }|& \leq &
    C n^{-1}. 
    \label{cl8-1}
  \end{eqnarray}
  \label{cl-8}
\end{lemma}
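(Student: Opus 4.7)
The plan is to handle the two bounds separately: the first follows directly from Jensen's inequality together with the moment bounds already established, while the second requires a decoupling argument that exploits the key symmetry $\conep{Q_i}{X_k, k\neq i} = 0$.

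For $\ep{Q_i^2} \leq C$: Jensen's inequality applied to the conditional expectation defining $Q_i$ gives $Q_i^2 \leq \conep{(X_i - X_i')^4}{\X}$, so $\ep{Q_i^2} \leq \ep{(X_i - X_i')^4} \leq C\,\ep{X_i^4}$ since $X_i$ and $X_i'$ have the same distribution. Applying Lemma \ref{cl-5} with $r = 4$ yields $\conep{X_i^4}{\s_i} \leq C e^{\beta \s_i^2}$, and the uniform bound $\ep{e^{\beta \s_i^2}} \leq C$ follows from the computation already used in Lemmas \ref{cl-2} and \ref{cl-6} (the case $r = 0$).

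For $|\ep{Q_i Q_j}| \leq C n^{-1}$ with $i \neq j$: observe that conditional on $\{X_k : k \neq i\}$, the pair $(X_i, X_i')$ is i.i.d., so the odd function $(X_i - X_i')|X_i - X_i'|$ of the symmetric variable $X_i - X_i'$ has zero conditional mean, giving $\conep{Q_i}{X_k, k \neq i} = 0$. Writing $Q_j = q(X_j, \s_j)$, where $q(x, s) = \int (x - y)|x - y|\,\nu_s(dy)$ and $\nu_s$ is the tilted conditional distribution with density proportional to $\exp(\beta y^2/(2n) + \beta s y)$ against $\rho$, and noting $\s_j = \s_{ij} + X_i/n$ with $\s_{ij} := (S_n - X_i - X_j)/n$, I introduce the proxy $\tilde Q_j := q(X_j, \s_{ij})$, which is $\sigma(X_k : k \neq i)$-measurable. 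Then
\[
\ep{Q_i Q_j} = \ep{Q_i \tilde Q_j} + \ep{Q_i (Q_j - \tilde Q_j)} = \ep{Q_i (Q_j - \tilde Q_j)},
\]
since the first term vanishes after conditioning on $\{X_k, k \neq i\}$.

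For the remaining term, a first-order Taylor expansion in the second argument of $q$ gives $|Q_j - \tilde Q_j| \leq (|X_i|/n)\sup_u |\partial_s q(X_j, \s_{ij} + u)|$ with the supremum taken over $u$ between $0$ and $X_i/n$. Explicit differentiation yields $\partial_s q(x, s) = \beta\,\mathrm{Cov}_{\nu_s}\!\bigl((x - Y)|x - Y|,\, Y\bigr)$, and Cauchy-Schwarz on $\nu_s$ together with $(x-y)^4 \leq 8(x^4 + y^4)$ and Lemma \ref{cl-5} (which applies to the general tilted distribution $\nu_s$) gives a polynomial bound of the form $|\partial_s q(x, s)| \leq C(1 + x^2)\, e^{\beta s^2}$. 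Cauchy-Schwarz then produces
\[
|\ep{Q_i(Q_j - \tilde Q_j)}| \leq \frac{1}{n} \bigl(\ep{Q_i^2}\bigr)^{1/2} \bigl(\ep{X_i^2(1 + X_j^2)^2 e^{2\beta (\s_{ij} + |X_i|/n)^2}}\bigr)^{1/2}.
\]
The main technical obstacle is to show that the last expectation is bounded uniformly in $n$: one integrates out $X_i$ and $X_j$ via Lemma \ref{cl-5} against their conditional distributions given $\s_{ij}$, absorbs the small $|X_i|/n$ perturbation in the exponent through one more Cauchy-Schwarz, and concludes with Lemma \ref{cl-6} (or the bound in Lemma \ref{cl-2}) for the resulting exponential moment of $\s_{ij}^2$, provided the effective constant in the exponent is kept strictly below $b$ under \eqref{c1-00} or $b_2$ under \eqref{c1-01}.
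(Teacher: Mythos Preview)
Your proof of \eqref{cl8-0} matches the paper's (modulo what looks like a typo in the paper, which writes $\ep{X_i-X_i'}^2$ where $\ep{(X_i-X_i')^4}$ is meant). For \eqref{cl8-1} your argument is correct but structurally different from the paper's. The paper symmetrizes: it replaces \emph{both} $Q_1$ and $Q_2$ by proxies $Q_1',Q_2'$ built from $\s_{12}$ (and with the $\beta\xi^2/(2n)$ factor dropped), then shows directly that $|\ep{Q_1'Q_2'}|\le C/n$ via the anti-symmetry of $u(s,t)=(s-t)|s-t|$ and an explicit calculation in the $\xi$-variables, and separately that $\ep{(Q_i-Q_i')^2}\le Cn^{-2}$; the four-term expansion \eqref{cl8-2} plus Cauchy--Schwarz finishes. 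You instead decouple asymmetrically: you modify only $Q_j$ to $\tilde Q_j$, and exploit $\conep{Q_i}{X_k,\,k\neq i}=0$ to make the leading term vanish \emph{exactly}, so that only the single remainder $\ep{Q_i(Q_j-\tilde Q_j)}$ survives. Your route is conceptually cleaner---no residual ``main term'' to estimate---at the cost of a mean-value/derivative bound on $s\mapsto q(x,s)$ and a somewhat delicate exponential-moment estimate in the last display, whereas the paper trades that derivative step for a more hands-on computation with the underlying $\xi_i$'s. Both paths rely on Lemmas~\ref{cl-5}--\ref{cl-6} for the final moment control and yield the same $O(n^{-1})$ bound.
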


\begin{proof}
    By \cref{cl-5,cl-6}, 
  \begin{eqnarray*}
      \ep{Q_i^2} \leq \ep{X_i - X_i'}^2 \leq 4
      \ep{X_i^2} \leq C.
  \end{eqnarray*}

  To prove
  \eqref{cl8-1},
let
\begin{eqnarray*}
  u(s,t) = (s-t)|s-t|.
\end{eqnarray*}
Let $\xi, \xi_1,\cdots,\xi_n$ be i.i.d. random
variables with probability measure $\rho$, which are independent of $(X_1,...,X_n)$.
We have
\begin{eqnarray*}
  Q_i = \frac{ \conep{ u(X_i, \xi) \expb{
  \frac{\beta \xi^2}{2n} + \beta \bar{X}_i \xi }
  }{\X}  }{ \conep{ \expb{ \frac{\beta \xi^2}{2n} +
  \beta \bar{X}_i \xi } }{\X} }.
\end{eqnarray*}

Without loss of generality,  consider $i=1,j=2$.
Define $\bar{X}_{12} =
\frac{1}{n}(S_n - X_1 - X_2)$, and
\begin{eqnarray*}
    Q_1' &=& \frac{ \E \Big( u(X_1,\xi) \exp(\beta \s_{12} \xi) \Big\vert \X \Big)  }{  \E \Big(  \exp(\beta \s_{12} \xi) \Big\vert \X \Big)   },\nonumber\\
    Q_2' &=& \frac{ \E \Big( u(X_2,\xi) \exp(\beta \s_{12} \xi) \Big\vert \X \Big)  }{  \E \Big(  \exp(\beta \s_{12} \xi) \Big\vert \X \Big)   }.
\end{eqnarray*}
Again, let
$m_{12} = (\xi_3 + \cdots + \xi_n)$.
We have
\begin{eqnarray*}
  \lefteqn{\ep{Q_1'Q_2'} } \\ &=&  \frac{1}{Z_n} \E
  \tilde{u}(\xi_1,m_{12}) \tilde{u}(\xi_2,
  m_{12}) \expb{ \frac{\beta }{2n}(\xi_1 +
    \xi_2)^2 + {\beta \over n} (\xi_1+\xi_2) m_{12} +
\frac{ \beta}{2n} m_{12}^2  }
\end{eqnarray*}
where
\begin{eqnarray*}
  \tilde{u}( x , y) =
  \frac{ \ep{ u( x, \xi) e^{
     {\beta \over n}y  \xi }
    }  }{ \ep{e^{
      {\beta \over n}
   y \xi }
} } .
\end{eqnarray*}
As $u(x,y)$ is anti-symmetric, we have
\begin{eqnarray*}
    \conep{ \tilde{u}(\xi_1,m_{12}) \tilde{u} (\xi_2, m_{12}) e^{\frac{\beta}{n}(\xi_1 + \xi_2) m_{12}}  }{m_{12}} = 0.
\end{eqnarray*}
Moreover,
\begin{eqnarray*}
    \lefteqn{  \ep{ |u(x,\xi)| e^{\frac{\beta y \xi }{n}} }  }\\
    & \leq &  C \Big(  x^2 \E e^{\beta y \xi /n  } + \ep{ \xi^2 e^{\beta y \xi /2}} \Big)
    \nonumber\\
    & \leq &  C e^{C y^2 /n^2 } \big( 1 +  x^2 + y^2/n^2  \big) .
\end{eqnarray*}
Similar to  \eqref{cl-5-1}, $\E e^{\beta y \xi /n} \geq C e^{-Cy^2/n^2}$, and thus,
\begin{eqnarray*}
    | \tilde{u}(x,y)| \leq C e^{C y^2/n^2} (1 + x^2 + y^2/n^2).
\end{eqnarray*}
Therefore, similar to \cref{cl-5,cl-6}, 
\begin{eqnarray}\label{cl8-4}
    \lefteqn{ | \E (Q_1' Q_2')| } \nonumber\\
    & \leq &  \frac{\beta }{n Z_n } \E |\tilde{u}(\xi_1, m_{12}) \tilde{u}(\xi_2, m_{12})  | (\xi_1 + \xi_2)^2 e^{\frac{\beta}{2n}(\xi_1+\dots+\xi_n)^2}\nonumber\nonumber\\
    & \leq &  \frac{ C }{n Z_n} \E \Big( 1 + \xi_1^4 + \xi_2^4 + \frac{m_{12}^4 }{n^4}  \Big) (\xi_1^2 + \xi_2^2)  e^{\frac{\beta}{2n}(\xi_1+\dots+\xi_n)^2}\nonumber\nonumber\\
    & \leq &  \frac{C}{n } \E (1 + \s_{12}^4) (1 + X_1^6 + X_2^6 ) e^{C \s_{12}^2} \nonumber\\
    & \leq &  \frac{C}{n}.
\end{eqnarray}

Next, we estimate
$\ep{(Q_1-Q_1')^2}$.
Note that
\begin{eqnarray*}
  \lefteqn{  |Q_1 - Q_1'|  }\\
  & \le &  \frac{ \Big| \E \Big( u(X_1,\xi) e^{\beta \s_{12} \xi} ( e^{ \frac{\beta \xi^2}{2n} + \frac{\beta X_2}{n}} - 1 ) \Big\vert \X \Big)  \Big|  }{ \E \expb{ \frac{\beta \xi^2}{2n } + \beta \s_1 \xi } }\\
  && +  \frac{ \conep{ |u(X_1,\xi)| e^{ \beta \s_{12} \xi}   }{\X} \conep{  e^{\beta \s_{12} \xi} | e^{ \frac{\beta \xi^2}{2n} + {\beta \s_{12} \xi} }-1 |  }{\X}   }{ \conep{ e^{\frac{\beta \xi^2}{2n} + \beta \s_{12} \xi } }{\X} \conep{ e^{\beta \s_{12} \xi}}{\X}   }.
\end{eqnarray*}
Note also that $|u(s,t) |\leq (s-t)^2$.  Similar to \cref{cl-5,cl-6}, 
we have
\begin{eqnarray}\label{cl8-5}
  \ep{(Q_1 - Q_1')^2} \leq C n^{-2}.
\end{eqnarray}
Observe that
\begin{eqnarray}\label{cl8-2}
  | \ep{Q_1 Q_2}|& \leq &  |\ep{Q_1'Q_2'}  |
  + | \ep{Q_1 (Q_2 - Q_2')} |\nonumber\\ &&  + | \ep{Q_2 (Q_1 -
  Q_1')} | +  |\E (Q_1  - Q_1')(Q_2 - Q_2') |,
\end{eqnarray}
Then, by  the Cauchy inequality and substituting
\eqref{cl8-0}, \eqref{cl8-4} and \eqref{cl8-5} into
\eqref{cl8-2}, we get the desired
result.
\end{proof}
We are now ready to prove  Propositions \ref{l-c1}--\ref{l-c3}.

\begin{proof}[Proof of \cref{l-c1}]
    By the definition of $S_n$ and $S_n'$, we have
\begin{eqnarray*}
  \conep{S_n-S_n'}{\X} & = & {1\over n}\sum_{i=1}^n \conep{X_i - X_i'}{\X}\\
  & = & \s - {1\over n}\sum_{i=1}^n \conep{X_i'}{\X}\\
  & = & \s - {1\over n} \sum_{i=1}^n { \int_{-\infty}^{\infty}x
  e^{{\beta x^2\over 2n}+ \beta \s_i x} d\rho(x)
\over \int_{-\infty}^{\infty}e^{ {\beta x^2 \over 2n} + \beta \s_i x } d\rho(x)   }.
\end{eqnarray*}
Observe that for $0 < \beta \leq 1$,
\begin{eqnarray}\label{lc1-h0}
  { \int_{-\infty}^{\infty}x e^{{\beta x^2\over 2n}+ \beta \s_i x}
  d\rho(x) \over \int_{-\infty}^{\infty}e^{ {\beta x^2 \over 2n} +
\beta \s_i x } d\rho(x)   } = h(\s_i)
+ r_{1i},
\end{eqnarray}
where
\begin{eqnarray*}
    h(s)& =& { \int_{-\infty}^{\infty}x e^{\beta s x} d\rho(x) \over \int_{-\infty}^{\infty}e^{  \beta s x } d\rho(x)   } \text{ and }\\
r_{1i} &= &{ \int_{-\infty}^{\infty}x e^{{\beta x^2\over 2n}+ \beta
\s_i x} d\rho(x) \over \int_{-\infty}^{\infty}e^{ {\beta x^2 \over
2n} + \beta \s_i x } d\rho(x)   } - { \int_{-\infty}^{\infty}x
e^{\beta \s_i x} d\rho(x) \over \int_{-\infty}^{\infty}e^{  \beta \s_i x } d\rho(x)   }.
\end{eqnarray*}

We first give the  bound of  $\E|r_{1i}|$.
Note that by \eqref{cl-5-1} and \eqref{cl-5-2},
\begin{eqnarray}\label{lc1-r1}
  \E|r_{1i}| & \leq & \E \Bigg| {\int_{-\infty}^{\infty}x \big(
  e^{\beta x^2 \over 2n}-1  \big)e^{\beta \s_i x}
d\rho(x) \over {\int_{-\infty}^{\infty}e^{\beta x^2 \over 2n}
e^{\beta \s_i x} d\rho(x)  }  } \Bigg| \nonumber\\ && + \E
\Bigg| {\int_{-\infty}^{\infty} \big( e^{\beta x^2 \over 2n}-1
\big)e^{\beta \s_i x} d\rho(x) \int_{-\infty}^{\infty}x  e^{\beta
x^2 \over 2n}  e^{\beta \s_i x} d\rho(x)\over
{\int_{-\infty}^{\infty}e^{\beta x^2 \over 2n} e^{\beta \s_i x}
d\rho(x)  \int_{-\infty}^{\infty} e^{\beta \s_i x} d\rho(x)}  } \Bigg|\nonumber\\
& \le & \frac{C}{n} \E \left|
\frac{ \int_{-\infty}^{\infty}|x|^3  \expb{ \frac{\beta x^2}{2n} +
\beta \s_i x } d\rho(x) }{ \int_{-\infty}^{\infty} \exp^{ \beta \s_i
x} d \rho(x) }
\right|
\nonumber\\
&& + \frac{C}{n} \E \left| \frac{ \int_{-\infty}^{\infty}|x|^2 e^{
\frac{\beta x^2}{2n} + \beta \s_i x }
d\rho(x) \int_{-\infty}^{\infty}|x| e^{ \frac{\beta x^2}{2n} +
\beta \s_i x  } d\rho(x)  }{ (\int_{-\infty}^{\infty} e^{ \beta \s_i
x} d \rho(x))^2  }  \right|\nonumber\\
& \leq & C n^{-1}\E  e^{C \s_i^2} \nonumber\\
  & \leq & C n^{-1}.
\end{eqnarray}
For $h(\s_i)$, we consider two cases.
\begin{description}
    \item [Case 1. ] {\bf $\beta=1$.} As $\rho$ is of type $k$, by the Taylor expansion,
\begin{eqnarray}\label{lc1-h1}
  h(\s_i) & = &  \s_i + {h^{(2k-1)}(0) \over
  (2k-1)!} \s_i^{2k-1}+ {1\over
  (2k-1)!}\int_0^{\s_i} h^{(2k)}(t) (\s_i-t)^{2k-1} dt\nonumber\\
  & = &  \s - {1\over n} X_i  +
  {h^{(2k-1)}(0) \over (2k-1)!} \s^{2k-1}
  + {h^{(2k-1)}(0) \over
  (2k-1)!}(\s_i^{2k-1}- \s^{2k-1})\nonumber\\
  && +{1\over (2k-1)!}\int_0^{\s_i} h^{(2k)}(t)
  (\s_i-t)^{(2k-1)} dt.
\end{eqnarray}

Hence,
\begin{eqnarray}
  \conep{S_n-S_n'}{\X} = { {h^{2k-1}(0)\over
  (2k-1)!} \s^{2k-1} } + R_1,\label{cwma11}
\end{eqnarray}
where
\begin{eqnarray*}
    R_1 = - \frac{1}{n}\sum_{i=1}^n \Big( h(\s_i) - \s - \frac{h^{(2k-1)}(0)}{(2k-1)!}\s^{2k-1} \Big) - \frac{1}{n}\sum_{i=1}^n r_{1i},
\end{eqnarray*}
and $r_{1i}$ is given in \eqref{lc1-h0} with $\beta=1$.

Observe that by \eqref{lc1-h1},
\begin{eqnarray}\label{lc1-h2}
    \lefteqn{  h(\s_i) - \s - \frac{h^{(2k-1)}(0)}{(2k-1)!}\s^{2k-1}   }\nonumber\\
        &=& - {1\over n} X_i
  + {h^{(2k-1)}(0) \over
  (2k-1)!}(\s_i^{2k-1}- \s^{2k-1})\nonumber \\ &&
  +{1\over (2k-1)!}\int_0^{\s_i} h^{(2k)}(t)
  (\s_i-t)^{(2k-1)} dt.
\end{eqnarray}

For the first  term of \eqref{lc1-h2}, it follows from  \cref{cl-5,cl-6} that 
\begin{eqnarray}\label{lc1-h3}
  {1 \over n} \E|X_i| \leq C n^{-1}.
\end{eqnarray}

For the second term, by \cref{cl-5,cl-6}  again,
\begin{eqnarray}\label{lc1-h4}
  \lefteqn{ {h^{(2k-1)}(0) \over
(2k-1)!}\E|\s_i^{2k-1}- \s^{2k-1}|  }\nonumber\\
& \leq & Cn^{-1} \E\Big( |X_i|(|\s_i|^{2k-2} + (|X_i|/n)^{2k-2})\Big) \nonumber\\
  & \leq & Cn^{-1} \E(1 +
  |\s_i|^{2k-1}) e^{C |\s_i|^2}\nonumber\\
  & \leq & Cn^{-1}.
\end{eqnarray}

To bound the last term, we first consider $h^{(2k)}(s)$. Recalling that
\begin{eqnarray*}
  h(t) = { \int_{-\infty}^{\infty}x e^{t x} d\rho(x) \over \int_{-\infty}^{\infty}e^{t x} d\rho(x)}
\end{eqnarray*}
and observing that
$$
\int_{-\infty}^\infty e^{tx} d \rho(x) \geq 1
$$
and
$$
|{ d^{j} \over dt^{j} } \int_{-\infty}^\infty e^{tx} d \rho(x)|
= |\int_{-\infty}^{\infty} x^j e^{tx} d\rho(x)|
\leq \int_{-\infty}^\infty (1+ |x|^{2k+1}) e^{tx} d \rho(x)
$$
for $ j=0, 1, \cdots, 2k+1$, we have
\begin{eqnarray*}
  |h^{(2k)}(t) |
   & \leq & C   \int_{-\infty}^{\infty} (1+ |x|^{2k+1})  e^{t x} d\rho(x)\\
  & \leq & C e^{t^2/2} ,
\end{eqnarray*}
Thus, by \eqref{cl-6-2},
\begin{align}\label{lc1-h5}
  \ML  {1\over (2k-1)!}\E\Big|\int_0^{\s_i}
  h^{(2k)}(t) (\s_i-t)^{(2k-1)} dt\Big| \nonumber\\
  & \leq  C \ep{ \s_i^{2k} e^{\s_i^2/2} } \leq
    C n^{-1}.
\end{align}
By \eqref{lc1-h3}, \eqref{lc1-h4} and \eqref{lc1-h5}, \eqref{lc1-h2} can be bounded by
\begin{eqnarray}
    \label{lc1-h6}
    \E \Big|  h(\s_i) - \s - \frac{h^{(2k-1)}(0)}{(2k-1)!}\s^{2k-1}  \Big| \leq C n^{-1}.
\end{eqnarray}

Together with  \eqref{lc1-h0} and \eqref{lc1-r1}, we have
\begin{eqnarray*}
    \E|R_1| \leq C n^{-1}.
\end{eqnarray*}
\item [Case 2. ]
For $\beta \in (0,1)$, we have
\begin{eqnarray*}
    h(\s_i) &=& \beta \s_i + \int_0^{\s_i} h''(t) (\s_i - t)dt\\
    &=& \beta \s - \frac{\beta }{n} X_i + \int_0^{\s_i} h''(t) (\s_i - t)dt.
\end{eqnarray*}
Hence, 
\begin{eqnarray*}
    \conep{S_n - S_n'}{\X} &=& (1-\beta)\s + R_2,
\end{eqnarray*}
where
\begin{eqnarray*}
    R_2 = - \frac{1}{n}\sum_{i=1}^n \Big( - \frac{\beta}{n} X_i + \int_0^{\s_i} h''(t) (\s_i - t) dt \Big) - \frac{1}{n}\sum_{i=1}^n r_{1i}.
\end{eqnarray*}
Similar to \eqref{lc1-h6}, we have
\begin{eqnarray*}
    \E \Big|  - \frac{\beta}{n} X_i + \int_0^{\s_i} h''(t) (\s_i - t) dt \Big| \leq C n^{-1}.
\end{eqnarray*}
Together with \eqref{lc1-r1}, we have
\begin{eqnarray*}
    \E|R_2| \leq C n^{-1}.
\end{eqnarray*}
\end{description}
This completes the proof.
\end{proof}

\begin{proof}[Proof of \cref{l-c2}]
    Observe that
\begin{eqnarray*}
  \conep{(S_n-S_n')^2}{\X} & = & {1\over n}\sum_{i=1}^n \conep{X_i^2 - 2 X_i X_i' + (X_i')^2}{\X}\\
  & = & {1\over n } \sum_{i=1}^n\Bigg( X_i^2 -
  {2X_i \int_{-\infty}^{\infty}x e^{{\beta x^2\over 2n}+\beta \s_i
x} d\rho(x) \over \int_{-\infty}^{\infty}e^{{\beta x^2\over
2n}+\beta \s_i x} d\rho(x) }  \nonumber \\ && \qquad \qquad +  { \int_{-\infty}^{\infty}x^2
e^{{\beta x^2\over 2n}+\beta \s_i x} d\rho(x)
\over \int_{-\infty}^{\infty}e^{{\beta x^2\over 2n}+\beta \s_i x} d\rho(x) } \Bigg)\\
& :=& 2 + R_3 + R_4 + R_5,
\end{eqnarray*}
where
\begin{eqnarray*}
    R_3& = & {1\over n}\sum_{i=1}^n (X_i^2 -1), \\
  R_4 & = & -{1\over n} \sum_{i=1}^n {2X_i  \int_{-\infty}^{\infty}x
  e^{{\beta x^2\over 2n}+\beta \s_i x} d\rho(x)
\over \int_{-\infty}^{\infty}e^{{\beta x^2\over 2n}+\beta \s_i x} d\rho(x) }, \\
  R_5 & = & {1\over n}\sum_{i=1}^n { \int_{-\infty}^{\infty}x^2
  e^{{\beta x^2\over 2n}+\beta \s_i x} d\rho(x)
\over \int_{-\infty}^{\infty}e^{{\beta x^2\over 2n}+\beta \s_i x} d\rho(x) }-1.
\end{eqnarray*}
By the Taylor expansion, and similar to the proof of $\E|R_1|$ and $\E|R_2|$, we have
\begin{eqnarray*}
   \E|R_4| + \E|R_5| \leq
   \begin{cases}
   C n^{-1/2}, &  0 < \beta < 1,\\
   C n^{ -\frac{1}{2k}}, & \beta = 1,
 \end{cases}
 \end{eqnarray*}
As for $\E|R_3|,$ we have 
\begin{eqnarray*}
    \E(R_3^2) = {1\over n^2}\sum_{i=1}^n \E (X_i ^2 - 1)^2  + {1\over n^2}\sum_{i\neq j}
  \E(X_i^2 -1)(X_j^2 -1).
\end{eqnarray*}
By \cref{cl-7}, we have
\begin{eqnarray*}
  \E(X_i^4) \leq C,\quad
  |\E(X_i^2-1)(X_j^2 -1)| \leq
  \begin{cases} C n^{-1}  , & 0 <  \beta < 1, \\ C
      n^{-1/k},  & \beta = 1 .
\end{cases}
\end{eqnarray*}
Therefore, $$\E|R_3|\leq  \begin{cases} C
    n^{-1/2}, &  0 < \beta < 1, \\ C n^{-{1\over 2k} }, & \beta
= 1. \end{cases}$$
This proves \eqref{lc2-1} and \eqref{lc2-2}.
\end{proof}

\begin{proof}[Proof of \cref{l-c3}]
We have
\begin{eqnarray*}
  \lefteqn{\conep{(S_n-S_n')|S_n-S_n'|}{\X}}\\
  & = & {1\over n }\sum_{i=1}^n \conep{(X_i-X_i')|X_i-X_i'|}{\X}.
\end{eqnarray*}
Then, \eqref{lc3-1} follows from  \cref{cl-8}.
\end{proof}

\subsection{Proof of \cref{thmhm}}
\def\k{\kappa}
The Berry-Esseen bound \eqref{res:hm} follows from \cref{thm-1} and \cref{lemhm} below.
\begin{proposition}
  \label{lemhm}
  Let $W_n$ be as defined in \eqref{def:W} and $\sigma'=\{\sigma_1',...,\sigma_n'\}$, where for each $i$,
  $\sigma_i'$ is an independent copy of $\sigma_i$ given $\{\sigma_j,j\neq i\}$.
  Let $I$ be a random index independent of all
  others and  uniformly distributed over $\{1,...,n\}$, and let
  $W_n'=\sqrt{n} \big( \frac{\beta^2 }{n^2\kappa^2} | S_n'| ^2 -1 \big) $, where $S_n' = \sum_{j=1}^n \sigma_j - \sigma_I + \sigma_I'$. Then, $(W_n,W_n')$ is an exchangeable pair and
there exists a constant $c_\beta$ depending on
$\beta$ only such that
    \begin{eqnarray}
      \conep{W_n-W_n'}{W_n} = \lambda(W_n-R_n) & \mbox{and} & \E |R_n| \leq
      {c_{\beta} n^{-1/2}},\label{hmc1}
    \end{eqnarray}
    where
      $\lambda={1-\beta \g'(\k) \over n}$;
        \begin{eqnarray}
          \E\Big| B^2 - {1\over 2\lambda}\conep{(W_n-W_n')^2}{W_n}  \Big|\leq
          {c_\beta n^{-1/2}},\label{hmc2}
        \end{eqnarray}
        where $B$ is defined in \eqref{def:B};
        and
        \begin{eqnarray}
          \label{hmc3}
          {1\over \lambda}\E\Big| \conep{(W_n-W_n')|W_n-W_n'|}{W_n} \Big|\leq
          {c_\beta   n^{-1/2}}.
        \end{eqnarray}
\end{proposition}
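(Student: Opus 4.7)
The plan is to exploit the Gibbs-sampler structure. Given $\{\sigma_j: j\neq i\}$, the conditional distribution of $\sigma_i'$ is the Fisher--von Mises distribution on $\ss^2$ with density proportional to $\exp(\beta\inprod{\sigma_i', m_i})$, where $m_i = (S_n - \sigma_i)/n$. Writing $v_i = m_i/|m_i|$, standard computations give $\conep{\sigma_i'}{\sigma_j, j \neq i} = \g(\beta|m_i|) v_i$, while the conditional second moments of $\inprod{\sigma_i',u}$ for $u$ in the $v_i$-direction and in its orthogonal complement in $\R^3$ equal $1 - 2\g(\beta|m_i|)/(\beta|m_i|)$ and $\g(\beta|m_i|)/(\beta|m_i|)$ respectively. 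Combined with the elementary identity
\[
W_n - W_n' = \frac{\beta^2}{n^{3/2}\kappa^2}\bigl(2\inprod{S_n, \sigma_I - \sigma_I'} - |\sigma_I - \sigma_I'|^2\bigr),
\]
this reduces every conditional moment to an explicit function of the magnetization $L := |S_n|/n$, up to negligible $O(1/n)$ corrections arising from replacing $|m_i|$ by $L$.

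For \eqref{hmc1}, I would average over $I$, apply the conditional expectation formula, and rewrite $\inprod{S_n, v_i}$ via $\inprod{S_n, S_n - \sigma_i} = (|S_n|^2 + |S_n - \sigma_i|^2 - 1)/2$. The leading term becomes a smooth function of $L$ whose unique positive zero is precisely the mean-field value $L = \kappa/\beta$ determined by \eqref{fun:g}. A first-order Taylor expansion around this fixed point, combined with the parametrization $W_n = \sqrt{n}((\beta L/\kappa)^2 - 1)$, identifies the leading-order coefficient as $\lambda = (1 - \beta \g'(\kappa))/n$; the second-order Taylor remainder and the $|m_i|$-versus-$L$ discrepancy together constitute $R_n$. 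To bound $\E|R_n|$ by $c_\beta n^{-1/2}$, I would invoke the concentration estimate $\E(L - \kappa/\beta)^2 = O(n^{-1})$ valid for $\beta > 3$, as established in \citet{KirMec2013} (or alternatively derived via a Hubbard--Stratonovich representation writing the Gibbs measure as a Gaussian mixture concentrating near $\kappa/\beta$).

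The proof of \eqref{hmc2} follows the same template. Squaring the identity above, averaging over $I$, and invoking the Fisher--von Mises second-moment formulas, $(1/(2\lambda))\conep{(W_n-W_n')^2}{\X}$ becomes a smooth function of $L$ whose value at $L = \kappa/\beta$ matches $B^2$ (the expression \eqref{def:B} already exhibits the telltale factor $1/\kappa^2 - 1/\sinh^2\kappa = \g'(\kappa)$, which makes the matching compatible); the deviation is once more controlled by $\E|L - \kappa/\beta|$. For \eqref{hmc3}, the structural observation is that $(W_n - W_n')|W_n - W_n'|$ is, up to lower-order terms, an odd cubic form in $\sigma_I - \sigma_I'$, so its conditional mean involves only the bias of the Fisher--von Mises distribution, which is of order $\g(\beta|m_i|) - L$ and small near the mean-field value. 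A Cauchy--Schwarz plus variance-decomposition argument in the spirit of \cref{cl-8}---writing the contribution of each index as $K_i$ and subtracting a version $K_i^{(j)}$ with the $j$-th spin removed to exploit conditional independence---then produces the claimed $n^{-1/2}$ bound.

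The main obstacle is the concentration of $L$ around $\kappa/\beta$ at the rate $n^{-1/2}$: this is where the hypothesis $\beta > 3$ enters essentially, since at the non-trivial fixed point it is equivalent to $1 - \beta \g'(\kappa) > 0$, ensuring both the attractivity of $\kappa/\beta$ and the strict positivity of $\lambda$ in \eqref{hmc1}. A secondary burden is the careful bookkeeping of the numerous remainder terms generated by replacing $|m_i|$ with $L$ and by the higher-order Taylor pieces; each of them must be shown to contribute at most $O(n^{-1/2})$ in $L^1$, which again rests on the concentration input above together with the moment bounds $\E|S_n|^k/n^{k/2} \leq c_{\beta,k}$ for $\beta > 3$.
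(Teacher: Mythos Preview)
Your outline for \eqref{hmc1} and \eqref{hmc3} is essentially what the paper does: Taylor expand around the fixed point $\kappa$, feed in the concentration estimate $\P(|\beta|S_n|/n-\kappa|>x)\leq e^{-K_\beta nx^2}$ from \citet{KirMec2013}, and for \eqref{hmc3} bound the variance of $\sum_i M_i$ by a pairwise decoupling argument (remove two spins, compare the joint conditional density with its product version, exploit conditional independence).

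There is, however, a genuine gap in your treatment of \eqref{hmc2}. The claim that $(1/(2\lambda))\conep{(W_n-W_n')^2}{\sigma}$ ``becomes a smooth function of $L$'' is false. Expanding the square and using the Fisher--von Mises formulas, one obtains (as in the paper, citing \citet{KirMec2013})
\[
\conep{(W_n-W_n')^2}{\sigma}
= \frac{4\beta^4}{n^4\kappa^4}\sum_{i=1}^n |\sigma^{(i)}|^2
\Bigl( (1-2\psi(b_i)/b_i) - 2\psi(b_i)\cos\alpha_i + \cos^2\alpha_i \Bigr),
\]
where $\cos\alpha_i=\inprod{\sigma_i,\sigma^{(i)}}/|\sigma^{(i)}|$. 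The terms involving $b_i=\beta|\sigma^{(i)}|/n$ and the linear $\cos\alpha_i$ term do reduce, after summation, to functions of $|S_n|$ up to $O(1/n)$ corrections. But the quadratic piece $\sum_i|\sigma^{(i)}|^2\cos^2\alpha_i=\sum_i\inprod{\sigma_i,\sigma^{(i)}}^2$ is \emph{not} a function of $|S_n|$: it equals $S_n^{\mathsf T}\bigl(\sum_i\sigma_i\sigma_i^{\mathsf T}\bigr)S_n$ up to lower order, and the empirical second-moment matrix $\sum_i\sigma_i\sigma_i^{\mathsf T}$ is not determined by $|S_n|$. Concentration of $L$ around $\kappa/\beta$ therefore does not by itself control this term.

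What is needed is precisely the same machinery you describe for \eqref{hmc3}: a second-moment bound
\[
\E\Bigl(\sum_{i=1}^n\bigl(\inprod{\sigma_i,\sigma^{(i)}}^2-a\bigr)\Bigr)^2\leq Cn^5,
\]
proved by expanding into diagonal and off-diagonal parts, and handling the cross term $\E(\inprod{\sigma_1,\sigma^{(1)}}^2-a)(\inprod{\sigma_2,\sigma^{(2)}}^2-a)$ via the two-spin decoupling trick (replace the joint conditional density $p_{12}$ of $(\sigma_1,\sigma_2)$ given $\sigma^{(1,2)}$ by its product approximation $\tilde p_{12}$, incurring an $O(1/n)$ error). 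So your variance-decomposition idea is right, but it has to be invoked already for \eqref{hmc2}, not only for \eqref{hmc3}.
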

\def\sigmai{\sigma^{(i)}}
\begin{proof}
    Let  $S_n = \sum_{i=1}^n \sigma_i$ and $\sigmai = S_n - \sigma_i$. The proof is organized in the following three parts.
  \begin{enumerate}[(i)]
    \item {\bf Proof of \eqref{hmc1}.}
        Let $\sigma=(\sigma_1,\dots,\sigma_n)$.
        As shown in \citet{KirMec2013} (p. 23, equation (12)),
      we have
  \begin{eqnarray*}
    \conep{W_n- W_n'}{\sigma} = {2\over n} W_n + {2\over \sqrt{n}} - {2\beta\over
    n^{1/2}\k^2 } (\beta|S_n|/n) \g(\beta|S_n|/n) + R_1,
  \end{eqnarray*}
  where $\g(x) = \coth(x) - 1/x$ and  $|R_1| \leq C n^{-3/2}$ for some constant $C$ depending on $\beta$.  
  The Taylor
  expansion yields
  \begin{eqnarray*}
    (\beta |S_n| /n)  \g(\beta |S_n|/n) &=&
    \k\g(\k) + (\g(\k)+\k\g'(\k)) \Big(
    {\beta |S_n| \over n} - \k
    \Big) + R_2
  \end{eqnarray*}
  where $|R_2|\leq C  (\beta |S_n| /n-\k)^2 $ with
  $C$ depending on
  $\beta$.

  Moreover, by  \citet{KirMec2013} (p. 25),
  \begin{eqnarray*}
      {\beta |S_n| \over n} - \k &=& {\k W_n \over 2\sqrt{n}} + R_3,
  \end{eqnarray*}
  where $|R_3|\leq C|W_n|^2/n.$
  Recalling \eqref{fun:g} and combining all of the preceding inequalities, we have
  \begin{eqnarray*}
    \conep{W_n-W_n'}{\sigma} &=& {1-\beta\g'(\k)\over n}
    ( W_n - R_n),
  \end{eqnarray*}
  where $|R_n|\leq CW_n^2/n^{1/2}$.
  It follows from  \citet*{KirMec2013} (p. 24)
  that there exists $\varepsilon_0 >0$ such that
  for all $x \in (0,\varepsilon_0]$,
  \begin{eqnarray*}
    \P\Big(\Big| {\beta |S_n| \over
    n}-\k\Big| > x    \Big)
    \leq e^{-K_\beta n x^2}
  \end{eqnarray*}
  for some constant $K_\beta> 0 .$ Then,
  \begin{eqnarray}\label{hl-1}
      \E|\beta|S_n|/n-\k|^4& \leq & 4
      \int_0^{\varepsilon_0} x^3 e^{-K_\beta n
      x^2} dx + C  \P\Big(\Big|\frac{\beta|S_n|}{n} -\k \Big|
      > \varepsilon_0\Big) \nonumber\\
      & \leq & C n^{-2} + C  e^{-K_\beta n
      \varepsilon_0}\nonumber\\
      & \leq & C
      n^{-2}.
  \end{eqnarray}
  It follows from the definition of $W_n$ that
  \begin{eqnarray*}
    \E|W_n|^2 &=& n \E\Big| {\beta^2 |S_n|^2 \over
    n^2 \k^2}  -1   \Big|^2\\
    & \leq & C n \E\Big|  {\beta |S_n| \over n \k} -1   \Big|^2\\
    & \leq & C,
  \end{eqnarray*}
  where $C$ depends on $\beta$. This proves \eqref{hmc1}.

\item {\bf Proof of \eqref{hmc2}.}
    From \citet{KirMec2013} (pp. 25--27, equations (16) and (18)), we have
   \begin{eqnarray*}
     \lefteqn{\conep{(W_n-W_n')^2 }{\sigma}} \\
     & =& {4\beta^4 \over n^4 \k^4 } \sum_{i=1}^n
     |\sigma^{(i)}|^2 \Big( (1-2\g(b_i)/b_i) - 2\g(b_i) \cos\alpha_i + \cos^2\alpha_i
     \Big)\nonumber\\
     &=& 2\lambda B^2 +  \frac{4\beta^4}{n^4 \kappa^4 } \bigg( 
      \sum_{i=1}^n \Big( 1 - \frac{2}{\beta}\Big)  \Big( |\sigma^{(i)}|^2 - \frac{(n-1)^2 \kappa^2}{\beta^2} \Big) \nonumber\\ && \qquad \qquad \qquad  - \frac{2\kappa}{\beta} \sum_{i=1}^n \Big( |\sigma^{(i)}|^2 \cos \alpha_i - \frac{n^2 \kappa^3 }{\beta^3} \Big)  \nonumber\\
 && \qquad\qquad \qquad  + \sum_{i=1}^n \Big( |\sigma^{(i)} |^2 \cos^2 \alpha_i - \Big( 1-\frac{2}{\beta}\Big)  \frac{(n-1)^2 \kappa^2 }{\beta^2} \Big)     \bigg) \nonumber\\
 && + \frac{4\beta^4 }{n^4 \kappa^4}  \sum_{i=1}^n \Big( 2 |\sigma^{(i)}|^2 \Big( \frac{\psi(b_i)}{b_i } - \frac{1}{\beta} \Big) - 2 |\sigma^{(i)}|^2 \cos \alpha_i \Big( \psi(b_i ) - \frac{\kappa}{\beta} \Big)  \Big) ,
   \end{eqnarray*}
   where $b_i = \beta|\sigmai|/n$ and $\alpha_i$ is the angle between $\sigma_i$ and $\sigma^{(i)}$.
   Therefore,
   \begin{eqnarray}
       \lefteqn{\frac{1}{2\lambda} \ep{ \conep{(W_n-W_n')^2 }{ \sigma} } - B^2 }\nonumber\\  &=&  \frac{2\beta^4}{n^3 \kappa^4 (1-\beta \psi'(\kappa))} (R_4 + R_5 + R_6 + R_7)
       , \label{hl-2}
   \end{eqnarray}
   where
   \begin{eqnarray*}
       R_4 &=& \sum_{i=1}^n \Big( 1-\frac{2}{\beta} \Big)  \Big( |\sigma^{(i)}|^2 - \frac{(n-1)^2 \kappa^2 }{\beta^2 } \Big) ,\\
       R_5 &=& \frac{2\kappa}{\beta} \sum_{i=1}^n \Big( |\sigma^{(i)}|^2 \cos \alpha_i - \frac{n^2 \kappa^3 }{\beta^3} \Big), \\
       R_6 &=& \sum_{i=1}^n \Big( |\sigma^{(i)}|^2 \cos^2  \alpha_i - \Big( 1 - \frac{2}{\beta} \Big) \frac{(n-1)^2 \kappa^2 }{\beta^2 } \Big) , \\
       R_7 &=&  \sum_{i=1}^n \Big( 2 |\sigma^{(i)}|^2 \Big( \frac{\psi(b_i)}{b_i } - \frac{1}{\beta} \Big) - 2 |\sigma^{(i)}|^2 \cos \alpha_i \Big( \psi(b_i ) - \frac{\kappa}{\beta} \Big)  \Big)    .
   \end{eqnarray*}
   For $R_4$, note  that $|\sigma^{(i)} - S_n  |\leq 1$; then, by  \eqref{hl-1},
   \begin{eqnarray}
       \label{hl-3}
       \E \Big| \frac{\beta |\sigma^{(i)}|}{n} - \kappa \Big|^4  \leq 8 \E \Big| \frac{\beta |S_n|}{n} - \kappa \Big|^4 + 8/n^4 \leq C n^{-2}.
   \end{eqnarray}
   Thus,
   \begin{eqnarray}
       \label{hl-4}
       \E|R_4 | & \leq &  \sum_{i=1}^n \E \Big| |\sigma^{(i)}|^2 - \frac{(n-1)^2 \kappa^2 }{\beta^2 } \Big|^2 \nonumber\\
       & \leq &  C n^2 \sum_{i=1}^n \E \Big| \frac{\beta^2  |\sigma^{(i)}|^2 }{n^2 } - \kappa^2  \Big|^2 \nonumber\\
       & \leq &  C n^2 \sum_{i=1}^n \E \Big| \frac{\beta |\sigma^{(i)}|}{n} - \kappa \Big|\nonumber\\  & \leq &  C n^{5/2}.
   \end{eqnarray}
   For $R_5$, by  \citet*{KirMec2013} (p. 28), we have
   \begin{eqnarray}
       \label{hl-5}
       \E| R_5| & \leq &   \E \Big| \sum_{i=1}^n \frac{2\kappa}{\beta} \Big( |S_n| \inprod{\sigma_i , S_n } - \frac{n^2 \kappa^3}{\beta^3} \Big)  \Big| + 2\kappa n^2 /\beta \nonumber\\
       & \leq &  \frac{ 2\kappa }{\beta}\E \Big| |S_n|^3 - \frac{n^3 \kappa^3}{\beta^3}  \Big| + 2\kappa n^2 /\beta \nonumber\\
       & \leq &  C n^{5/2}.
   \end{eqnarray}
   For $R_6$, we shall prove shortly that
    \begin{eqnarray}\label{lemh2-1}
        \E \Big( \sum_{i=1}^n \Big( \inprod{\sigma_i, \sigma^{(i)}}^2 - \Big( 1-\frac{2}{\beta} \Big) \frac{(n-1)^2 \kappa^2 }{\beta^2} \Big)  \Big) ^2 \leq C n^{5}.
    \end{eqnarray}
   By  \eqref{lemh2-1} and the Cauchy inequality, we have
   \begin{eqnarray}
       \label{hl-6}
       \E |R_6 | \leq C n^{5/2}.
   \end{eqnarray}
   For $R_7$, as   $\psi(\kappa)/\kappa = 1/\beta$,  and by  the smoothness of $\psi$, we have
   \begin{eqnarray*}
       \Big|  \frac{\psi(b_i)}{b_i} - \frac{\psi(\kappa)}{\kappa}\Big|\leq |b_i - \kappa|,
   \end{eqnarray*}
   and
   \begin{eqnarray*}
       \big| \psi(b_i ) - \psi(\kappa) \big|\leq |b_i - \kappa|.
   \end{eqnarray*}
   Thus, by  \eqref{hl-3},
   \begin{eqnarray}
       \label{hl-7}
       \E|R_7| \leq C n^2 \sum_{i=1}^n\E |b_i - \kappa |\leq C n^{5/2}.
   \end{eqnarray}
   Then,  \eqref{hmc2} follows from \eqref{hl-2}--\eqref{hl-7}.

 \item {\bf Proof of \eqref{hmc3}.}  Similarly,
     we have
     \begin{eqnarray}
         \label{hl-8}
         {  \conep{(W_n-W_n')|W_n - W_n'|}{\sigma}  }
         &=& \frac{4\beta^4}{n^4 \kappa^4} \sum_{i=1}^n M_i,
     \end{eqnarray}
     where
     \begin{eqnarray*}
         M_i = \conep{ \inprod{\sigma_i, \sigma^{(i)}}|\inprod{\sigma_i, \sigma^{(i)}}   | - \inprod{\sigma_i', \sigma^{(i)}}|\inprod{\sigma_i', \sigma^{(i)}}   |   }{\sigma}.
     \end{eqnarray*}
     We shall prove that
    \begin{eqnarray}
        \label{lemh3-1}
        \E \Big( \sum_{i=1}^n M_i  \Big)^2 \leq C n^{5}.
    \end{eqnarray}
    The proof of \eqref{lemh3-1} is given at the end of this subsection. 

     By  the definition of $\lambda$ and \eqref{lemh3-1}, we have
     \begin{eqnarray*}
         \frac{1}{\lambda} \E \big| \conep{(W_n-W_n')|W_n-W_n'|}{\sigma} \big| \leq C n^{-1/2}.
     \end{eqnarray*}
     This proves \eqref{hmc3}.   Thus, we complete the proof of \cref{lemhm}.
\end{enumerate}
\end{proof}

We now give the proofs of \eqref{lemh2-1} and \eqref{lemh3-1}.
\begin{proof}
    [Proof of \eqref{lemh2-1}]
\label{page-lemh2-1}
Set $a =  \big( 1-\frac{2}{\beta}  \big) \frac{(n-1)^2 \kappa^2 }{\beta^2}  $.
    Given the  symmetry, we have
    \begin{eqnarray}\label{lemh2-01}
        { \E \Big( \sum_{i=1}^n \big( \inprod{\sigma_i, \sigma^{(i)}}^2 -a \big) \Big) ^2   }
        &=& H_1 + H_2 ,
    \end{eqnarray}
    where
    \begin{eqnarray*}
        H_1 &=&   n \E \big( \inprod{\sigma_1, \sigma^{(1)}}^2 - a \big) ^2,  \nonumber\\
        H_2 &=&   n(n-1) \E \big(  \inprod{\sigma_1, \sigma^{(1)}}^2 - a \big) \big(  \inprod{\sigma_2, \sigma^{(2)}}^2 - a\big) \nonumber.
    \end{eqnarray*}
    For $H_1$, as $| \sigma^{(1)}| \leq n$, we have
    \begin{eqnarray}
        \label{lemh2-02}
        H_1 \leq C n^5.
    \end{eqnarray}
    For $H_2$, we define  $\sigma^{(1,2)} = S_n - \sigma_1 - \sigma_2$, and for $j = 1,2$, we have
    \begin{eqnarray*}
        | \inprod{\sigma_j, \sigma^{(j)}}^2 - \inprod{\sigma_j, \sigma^{(1,2)}}^2 | \leq C n.
    \end{eqnarray*}
    Thus,
    \begin{eqnarray}
        \label{lemh2-03}
        H_2 = H_3 + L_1,
    \end{eqnarray}
    where $|L_1| \leq C n^{5}$ and
    \begin{eqnarray*}
        H_3 = n(n-1) \E \big(  \inprod{\sigma_1, \sigma^{(1,2)}}^2 - a  \big) \big(  \inprod{\sigma_2, \sigma^{(1, 2)}}^2 - a \big) \nonumber.
    \end{eqnarray*}
    For $i=1,2$, we define
    \begin{eqnarray*}
        V_i (\sigma^{(1,2)}) = \conep{ \inprod{\sigma_i, \sigma^{(1,2)}}^2 }{\sigma^{(1,2)}},
    \end{eqnarray*}
    and thus,
    \begin{eqnarray}
        \label{lemh2-04}
        \lefteqn{   \E \big(  \inprod{\sigma_1, \sigma^{(1,2)}}^2 - a  \big) \big(  \inprod{\sigma_2, \sigma^{(1, 2)}}^2 - a \big) \nonumber   }\\
        &=& \E \big( \inprod{\sigma_1, \sigma^{(1,2)}}^2  - V_1 (\sigma^{(1,2)})  \big) \big( \inprod{\sigma_2, \sigma^{(1,2)}}^2  - V_2 (\sigma^{(1,2)}) \big) \nonumber\\
        &&\quad  + \; \E \big( V_1 (\sigma^{(1,2)}) - a \big) \big( V_2 (\sigma^{(1,2)}) - a \big).
    \end{eqnarray}
    Note that the conditional probability density function of $(\sigma_1, \sigma_2)$ given $\sigma^{(1,2)}$  is
    \begin{eqnarray}\label{lemh2-05}
        p_{12}(x,y) = \frac{1}{Z_n^{(1,2)}} \exp \Big( \frac{ \beta }{ 2n } \inprod{x,y}^2 + \frac{\beta }{n} \inprod{x+y, \sigma^{(1,2)}}  \Big) ,
    \end{eqnarray}
    where $x,y \in \ss^2$ and
    \begin{eqnarray*}
        Z_n^{(1,2)} = \int_{\ss^2 } \int_{\ss^2} \exp \Big( \frac{ \beta }{ 2n } \inprod{x,y}^2 + \frac{\beta }{n} \inprod{x+y, \sigma^{(1,2)}}  \Big)  dP_n(x) dP_n(y).
    \end{eqnarray*}
    Similarly, we define
    \begin{eqnarray}\label{lemh2-06}
        \tilde{p}_{12}(x,y) = \frac{1}{\tilde{Z}_n^{(1,2)}} \exp \Big(  \frac{\beta }{n} \inprod{x+y, \sigma^{(1,2)}}   \Big) ,
    \end{eqnarray}
    where
    $x,y\in \ss^2$ and
    \begin{eqnarray*}
        \tilde{Z}_n^{(1,2)} =  \int_{\ss^2 } \int_{\ss^2}  \exp \Big(  \frac{\beta }{n} \inprod{x+y, \sigma^{(1,2)}}   \Big)  dP_n(x) dP_n(y).
    \end{eqnarray*}
    For any $x,y\in \ss^2$, we have
    \begin{eqnarray}\label{lemh2-07}
        | p_{12}(x,y) - \tilde{p}_{12}(x,y) | \leq C n^{-1}.
    \end{eqnarray}
    Let $(\xi_1,\xi_2)$ be a random vector with conditional density function $\tilde{p}_{12}(x,y)$,  given $\sigma^{(1,2)}$. 
    Then,  for the first term of \eqref{lemh2-04},  by  \eqref{lemh2-07}, we have
    \begin{eqnarray}\label{lemh2-08}
        \lefteqn{ \nonumber \E \big( \inprod{\sigma_1, \sigma^{(1,2)}}^2  - V_1 (\sigma^{(1,2)})  \big) \big( \inprod{\sigma_2, \sigma^{(1,2)}}^2  - V_2 (\sigma^{(1,2)}) \big)   }\\
        &=& \E \big( \inprod{\xi_1, \sigma^{(1,2)}}^2  - \tilde{V}_1 (\sigma^{(1,2)})  \big) \big( \inprod{\xi_2, \sigma^{(1,2)}}^2  - \tilde{V}_2 (\sigma^{(1,2)}) \big)  + L_2,
    \end{eqnarray}
    where $|L_2| \leq C n^{3}$ and for $i=1,2$,
    \begin{eqnarray}\label{lemh2-v1}
        \tilde{V}_i (\sigma^{(1,2)}) &=&  \conep{ \inprod{\xi_i, \sigma^{(1,2)}}^2 }{\sigma^{(1,2)}} \nonumber\\  &=&  | \sigma^{(1,2)}|^2 \Big( 1 - \frac{2\psi(b_{12})}{b_{12}} \Big) ,\\
        b_{12} &=&  \beta |\sigma^{(1,2)}| /n. \label{lemh2-v2}
    \end{eqnarray}
    Observe that given $\sigma^{(1,2)}$, $\xi_1$ and $\xi_2$ are conditionally independent;  then,  the first term of \eqref{lemh2-08} is 0, and thus,
    \begin{eqnarray}\label{lemh2-v3}
        \Big| \E \big( \inprod{\sigma_1, \sigma^{(1,2)}}^2  - V_1 (\sigma^{(1,2)})  \big) \big( \inprod{\sigma_2, \sigma^{(1,2)}}^2  - V_2 (\sigma^{(1,2)}) \big) \Big|  \leq C n^3.
    \end{eqnarray}
    It suffices to bound the second term of \eqref{lemh2-04}. Again, by  \eqref{lemh2-07}, we have
    \begin{eqnarray}
        \label{lemh2-09}
        \lefteqn{   \E \big( V_1 (\sigma^{(1,2)}) - a \big) \big( V_2 (\sigma^{(1,2)}) - a \big) \nonumber   }\\
        &=&  \E \big( \tilde{V}_1 (\sigma^{(1,2)}) - a \big) \big( \tilde{V}_2 (\sigma^{(1,2)}) - a \big) + L_3  ,
    \end{eqnarray}
    where $|L_3 |\leq C n^{3}$.
    Recalling that $\beta \psi(\kappa) = \kappa $ and the definition of $a$, we obtain
    \begin{eqnarray}
        \label{lemh2-10}
        \lefteqn{ \Big|  \tilde{V}_1(\sigma^{(1,2)}) - a \Big| \nonumber }\\
        &\leq & | \sigma^{(1,2)}|^2 \Big| \frac{\psi(b_{12})}{b_{12}} - \frac{\psi(\kappa)}{\kappa} \Big| + \Big( 1- \frac{2}{\beta}  \Big)  \Big| | \sigma^{(1,2)}|^2 - \frac{(n-1)^2 \kappa^2 }{\beta^2} \Big| \nonumber\\
        & \leq & C n^2 |b_{12} - \kappa| + C n.
    \end{eqnarray}
    By   \eqref{lemh2-10} and similar to \eqref{hl-3}, we have
    \begin{align}
        \label{lemh2-11}
        \ML   \Big| \E \big( \tilde{V}_1 (\sigma^{(1,2)}) - a \big) \big( \tilde{V}_2 (\sigma^{(1,2)}) - a \big) \Big| \nonumber\\ & \leq  C n^{4} \E |b_{12} - \kappa |^2 + C n^3 \nonumber\\ & \leq   C n^{3}.
    \end{align}
    It follows from \eqref{lemh2-09} and \eqref{lemh2-11} that
    \begin{eqnarray}\label{lemh2-12}
        \Big|   \E \big( V_1 (\sigma^{(1,2)}) - a \big) \big( V_2 (\sigma^{(1,2)}) - a \big)  \Big| \leq C n^{3}.
    \end{eqnarray}
    The inequalities \eqref{lemh2-03}, \eqref{lemh2-04}, \eqref{lemh2-v3} and \eqref{lemh2-12} yield $|H_2| \leq C n^{5}$, and this completes the proof together with \eqref{lemh2-01} and \eqref{lemh2-02}.
\end{proof}
Next, we give the proof of \eqref{lemh3-1}.

\begin{proof}[Proof of \eqref{lemh3-1}]

\label{page-lemh3-1}
    Given the  symmetry, we have
    \begin{eqnarray}
        \label{lemh3-01}
        \E \Big( \sum_{i=1}^n M_i \Big) ^2 = n \E (M_1^2) + n(n-1) \E (M_1 M_2).
    \end{eqnarray}
    As $|\sigma^{(1)}| \leq n$, we have $\ep{M_1^2} \leq C n^{4}$. For $\ep{M_1M_2}$,
    we define
    \begin{eqnarray*}
        m_i &=& \inprod{\sigma_i, \sigma^{(i)}} |\inprod{\sigma_i,\sigma^{(i)}} |,\nonumber\\ 
        m_i^{(1,2)} &=&  \inprod{\sigma_i, \sigma^{(1,2)}} |\inprod{\sigma_i,\sigma^{(1,2)}} |, 
    \end{eqnarray*}
    where $i=1,2$.
    Then,  we have $|m_i - m_i^{(1,2)}| \leq C n.$ Thus,
    \begin{eqnarray}
        \label{lemh3-02}
        \ep{M_1M_2} = \ep{M_1^{(1,2)} M_{2}^{(1,2)}} + L_4,
    \end{eqnarray}
    where $|L_4| \leq C n^{3}$ and
    \begin{eqnarray*}
        M_i ^{(1,2)} = m_i^{(1,2)} - \conep{m_i^{(1,2)}}{\sigma^{(1,2)}}.
    \end{eqnarray*}
    Let $(\xi_1,\xi_2)$ be as defined in \eqref{lemh2-08}. By  \eqref{lemh2-05}--\eqref{lemh2-07}, we have
    \begin{eqnarray}
        \label{lemh3-03}
        \Big| \ep{M_1^{(1,2)}  M_2^{(1,2)}} - \ep{\tilde{M}_1^{(1,2)} \tilde{M}_2^{(1,2)}} \Big|\leq C n^3,
    \end{eqnarray}
    where for $i=1,2$,
    \begin{eqnarray*}
        \tilde{M}_i^{(1,2)} &=&  \tilde{m}_i^{(1,2)} - \conep{\tilde{m}_i^{(1,2)}}{\sigma^{(1,2)}},\nonumber\\
        \tilde{m}_i^{(1,2)} &=&  \inprod{\xi_i, \sigma^{(1,2)}} |\inprod{\xi_i,\sigma^{(1,2)}} |.
    \end{eqnarray*}
    As  $\xi_1$ and $\xi_2$ are conditionally independent given  $\sigma^{(1,2)}$, we have  $$ \E(\tilde{M}_1^{(1,2)} \tilde{M}_2^{(1,2)}) =0,  $$ and by  \eqref{lemh3-02} and \eqref{lemh3-03} we have
    $|\ep{M_1M_2}| \leq C n^3.$ Together with \eqref{lemh3-01}, we complete the proof of \eqref{lemh3-1}.
\end{proof}
\subsection{Proof of \cref{thm:me}}
As  the vertices are colored independently and uniformly, we can
construct the exchangeable pair as follows. Let $\xi_i',\cdots,\xi_n'$ be
independent copies of $\xi_1,\cdots,\xi_n$, and $I$
be a random index
independent
of all others and uniformly distributed over $\left\{ 1,\cdots,n \right\}$.
Recall that
$$
W : = W_n = \frac{1}{2} \sum_{i = 1}^n \sum_{j \in A_i} \frac{  \IN{\xi_i = \xi_j } - \frac{1}{c_n}}{ \sqrt{ \frac{m_n}{c_n} (1 - \frac{1}{c_n})}}.
$$
We
replace $\xi_I$ with $\xi_I'$ in $W$ to obtain a new random variable
$W'$; then,  $(W,W')$ is an exchangeable pair.
Let  $\X$ be the sigma field generated by
$\left\{ \xi_1,\cdots,\xi_n \right\}$ and $\sigma^2 = {m_n\over c_n}(1-{1\over
c_n})$.
We have
\begin{eqnarray*}
  \conep{W-W'}{\X} &=& {1\over n} \sum_{i=1}^n \sum_{j\in A_i} {\IN{\xi_i
  =\xi_j} - \conep{\IN{\xi_i' = \xi_j}}{\X} \over \sigma}\\
  &=& {1\over n} \sum_{i=1}^n \sum_{j\in A_i} {\IN{\xi_i
  =\xi_j} -1/c_n \over \sigma}\\
  &=& {2\over n} W.
\end{eqnarray*}
Hence, \eqref{con1} holds with $\lambda ={2\over n}$ and $R_n =0$.
By  \cref{thm-1}, it suffices to prove
\begin{eqnarray}
   \lefteqn{  \E\left|  1 -  {1\over 2\lambda} \conep{(W-W')^2 }{W}  \right|}\nonumber\\  & \leq &
  C( \sqrt{1/c_n} + \sqrt{d^*_n/m_n} + \sqrt{c_n/m_n}) \label{me2}
  \end{eqnarray}
  and
  \begin{eqnarray}
  \lefteqn{ {1\over \lambda}\E\left|  \conep{(W-W')|W-W'|}{W}  \right|}\nonumber\\ & \leq &
 C( \sqrt{d^*_n/m_n} + \sqrt{c_n/m_n}),
  \label{me3}
\end{eqnarray}
where $C$ is an absolute constant and $d_n^* = \max \{ d_i, 1 \le i \leq n \}$.
\begin{proof}
  [Proof of \eqref{me2}]
  Observe that
  \begin{eqnarray}\label{me2-1}
    \lefteqn{\conep{(W-W')^2}{\X}}\nonumber\\
    &=& {1\over n\sigma^2 } \sum_{i=1}^n \conepb{ \Big( \sum_{j\in A_i}
    \IN{\xi_i=\xi_j} - \IN{\xi_i' = \xi_j} \Big)^2  }{\X}\nonumber \\
    &=& {1\over n\sigma^2 } \sum_{i=1}^n \Big( \Big( \sum_{j\in A_i}
    (\IN{\xi_i=\xi_j}-1/c_n) \Big)^2 + \conepb{ \Big( \sum_{j\in A_i}
\IN{\xi_i'=\xi_j} -1/c_n \Big)^2 }{\X} \Big)\nonumber\\
&=& \frac{1}{n\sigma^2} \sum_{i=1}^n \Big( \Big( \sum_{j \in A_i } h(\xi_i,\xi_j) \Big) ^2 + \conepb{ \Big(\sum_{j \in A_i} h(\xi_i',\xi_j) \Big) ^2  }{\X} \Big) ,\nonumber\\
  \end{eqnarray}
  where
  \begin{eqnarray*}
      h(x,y) = \IN{x=y} - 1/c_n.
  \end{eqnarray*}
  By
  the law of total variance, we need only to bound the variance of the
  first
  term. Note that
  \begin{eqnarray}\label{me2-2}
    \lefteqn{ \Var\Big( \sum_{i=1}^n \Big( \sum_{j\in A_i}
    h(\xi_i,\xi_j) \Big)^2 \Big)  }\nonumber\\
    &=& \sum_{i=1}^n \Var\Big( \sum_{j\in A_i} h(\xi_i,\xi_j)
    \Big)^2 \nonumber \\
    && + \sum_{i\neq i'} \Cov\Big( \Big( \sum_{j\in A_i} h(\xi_i,\xi_j)
        \Big)^2, \Big( \sum_{l\in A_{i'}} h(\xi_{i'}, \xi_l).
    \Big)^2   \Big)
  \end{eqnarray}
 As
 $$\Big(\sum_{j\in A_i} h(\xi_i,\xi_j) \Big)^2 = \sum_{j\in A_i}
 h^2(\xi_i,\xi_j) +\sum_{j\neq l \in A_i}
 h(\xi_i,\xi_j) h(\xi_i,\xi_l),
$$
we have
\begin{eqnarray*}
    \Var \Big( \sum_{j \in A_i} h(\xi_i,\xi_j) \Big)^2 \leq 2 \Var \Big( \sum_{j\in A_i}
    h^2(\xi_i,\xi_j) \Big)  + 2 \Var\Big( \sum_{j\neq l \in A_i}
    h(\xi_i,\xi_j) h(\xi_i,\xi_l) \Big) .
\end{eqnarray*}

Note that
  \begin{eqnarray*}
      \lefteqn{\Var\Big( \sum_{j\in A_i}h^2(\xi_i,\xi_j)   \Big)}\\
      &=& \epb{\Var\Big(\sum_{j\in A_i} h^2(\xi_i,\xi_j) \Big| \xi_i
    \Big)}
   + \Var\Big(\conepb{\sum_{j\in A_i}h^2(\xi_i,\xi_j)}{\xi_i}
    \Big)\\
    &=& d_i \left( {1\over c_n}\left( 1-{1\over c_n} \right)\left( 1-{2\over c_n
    }+{2\over c_n^2} \right) \right)\\
    & \leq & d_i /c_n,
  \end{eqnarray*}
  where for every $i \neq j$,
  \begin{align}
      \Var( h^2(\xi_i,\xi_j) \mid \xi_i) & =
      (1/c_n)(1-1/c_n)(1-2/c_n+2/c_n^2),  \label{var-1}\\ \intertext{and}
  \conep{ h^2(\xi_i,\xi_j)}{\xi_i} &
  =(1/c_n)(1-1/c_n).  \label{var-2}
  \end{align}
  Also, for $j\neq l\neq i$,
  $\ep{h(\xi_i,\xi_j)h(\xi_i,\xi_l)}=0$. Thus,  we have
  \begin{eqnarray*}
      \lefteqn{ \Var\Big(\sum_{j\neq l\in A_i} h(\xi_i,\xi_j) h(\xi_i,\xi_l)
       \Big)  }\\
       &=& \epb{\sum_{j\neq l\in A_i} h(\xi_i,\xi_j)h(\xi_i,\xi_l)  }^2\\
    &=& 2d_i (d_i-1) \left( {1\over c_n}\left( 1-{1\over c_n} \right)
    \right)^2\\
    & \leq & 2d_i^2 / c_n^2.
  \end{eqnarray*}
  Therefore,
  \begin{eqnarray}\label{me2-5}
      \Var\Big(\sum_{j\in A_i} h(\xi_i,\xi_j)   \Big)^2 \leq 4 d_i/c_n + 4 d_i^2/c_n^2.
  \end{eqnarray}
  This gives the bound of the first term of \eqref{me2-2}. To bound the second term of \eqref{me2-2},
  we let $\delta_{ii'} = \IN{(v_i,v_{i'})\in E}$ for $i\neq i'$, which indicates the connection between vertex $i$ and $i'$.  We have
  \begin{eqnarray}\label{me2-3}
      \lefteqn{\Cov\Big( \Big(\sum_{j\in A_i} h(\xi_i,\xi_j)
    \Big)^2,
    \Big(\sum_{l\in A_{i'}} h(\xi_{i'},\xi_l)  \Big)^2  \Big)}\nonumber\\
    &=&\nonumber \Cov \Big( \sum_{j \in A_i} h^2(\xi_i,\xi_j) + \sum_{j\neq j'\in A_i } h(\xi_i,\xi_{j})h(\xi_i, \xi_{j'}) , \\ && \nonumber \qquad \sum_{l \in A_{i'}} h^2(\xi_{i'}, \xi_l)+ \sum_{l\neq l' \in A_{i'}} h(\xi_{i'}, \xi_l) h(\xi_{i'}, \xi_{l'}) \Big) \\
    &=&\nonumber \sum_{j \in A_i} \sum_{l \in A_{i'}} \Cov(h^2(\xi_i,\xi_j), h^2(\xi_{i'},\xi_l)) \\ \nonumber &&   + \sum_{j \neq j'\in A_i} \sum_{l \in A_{i'}} \Cov(h(\xi_i,\xi_j) h(\xi_i,\xi_{j'}), h^2(\xi_{i'},\xi_l) )\\
    &&\nonumber + \sum_{j \in A_i} \sum_{l\neq l' \in A_{i'}} \Cov( h^2(\xi_i,\xi_j), h(\xi_{i'},\xi_l)h(\xi_{i'},\xi_{l'}) ) \\ && \nonumber + \sum_{j\neq j'\in A_i}\sum_{l\neq l' \in A_{i'}} \Cov( h(\xi_i,\xi_j)h(\xi_i,\xi_{j'}), h(\xi_{i'},\xi_l)h(\xi_{i'},\xi_{l'}) )\\
    &:=& H_1 + H_2 + H_3 + H_4.
  \end{eqnarray}
  Next, we compute the preceding  covariances.
  For $H_1$, we have
  \begin{eqnarray}
      H_{1} &=& \delta_{ii'} \Var(h^2(\xi_i,\xi_{i'})) + \delta_{ii'}\sum_{j  \in A_{i} \setminus \{i'\}} \Cov( h^2(\xi_i, \xi_{i'}), h^2(\xi_{i}, \xi_j ) )\nonumber\\
      &&  +  \delta_{ii'} \sum_{l  \in A_{i'} \setminus \{i\}} \Cov(h^2(\xi_i, \xi_{i'}), h^2(\xi_{i'},\xi_l))\nonumber\\ &&  + \sum_{j  \in A_{i} \setminus \{i'\}} \sum_{l  \in A_{i'} \setminus \{i\}} \Cov(h^2(\xi_i,\xi_j), h^2(\xi_{i'}, \xi_l) ).\nonumber
  \end{eqnarray}
  For the first term, by  \eqref{var-1} and \eqref{var-2}, 
  we have
  \begin{eqnarray*}
      \Var(h^2(\xi_i,\xi_{i'})) \leq 1/c_n.
  \end{eqnarray*}
  For $j \in A_i \setminus \left\{ i' \right\}$, by  \eqref{var-2}, we have
  \begin{eqnarray*}
      \Cov(h^2(\xi_i,\xi_{i'}), h^2(\xi_i,\xi_j)) &=& \Cov ( \conep{h^2(\xi_i,\xi_{i'})}{\xi_i}, \conep{h^2(\xi_i,\xi_j)}{\xi_i} )\nonumber\\
      &=& 0.
  \end{eqnarray*}
  Similarly, for $l \in A_{i'}\setminus \left\{ i \right\}$, we have
  \begin{eqnarray*}
      \Cov(h^2(\xi_{i'},\xi_i), h^2(\xi_{i'}, \xi_l)) = 0.
  \end{eqnarray*}
  For the last term, if $j \neq l \nin \left\{ i,i' \right\}$, then $h(\xi_i,\xi_j)$ and $h(\xi_{i'},\xi_l)$ are independent. 
  If $j = l \nin \left\{ i,i' \right\}$, by  \eqref{var-2}, we have
  \begin{eqnarray*}
      \Cov(h^2(\xi_i, \xi_j), h^2(\xi_{i'}, \xi_l)) =0.
  \end{eqnarray*}
  Therefore,
  \begin{eqnarray}
      | H_1 | \leq \delta_{ii'} / c_n.
      \label{me2-h1}
  \end{eqnarray}

  For $H_2$,
  we have
  \begin{eqnarray}\label{meh2-a}
      H_2 &=& \delta_{ii'} \sum_{j \neq j' \in A_i} \Cov(h(\xi_i,\xi_{j})h(\xi_i, \xi_{j'}), h^2(\xi_i,\xi_{i'})) \nonumber\\
      && + \sum_{ j \neq j' \in A_i } \sum_{ l \in A_{i'} \setminus \left\{ i \right\}} \Cov (h(\xi_i,\xi_{j})h(\xi_i, \xi_{j'}), h^2(\xi_i,\xi_{l}))\nonumber\\
      &=& H_{21} + H_{22}.
  \end{eqnarray}
  For $H_{21}$, if $j \neq i'$ or $j' \neq  i'$, then
  \begin{eqnarray*}
      \lefteqn{  \Cov(h(\xi_i,\xi_{j})h(\xi_i, \xi_{j'}), h^2(\xi_i,\xi_{i'}))   }\\
      &=& \ep{  h(\xi_i,\xi_{j})h(\xi_i, \xi_{j'}) h^2(\xi_i,\xi_{i'})}\\
      &=& \ep{ \conep{h(\xi_i,\xi_{j})h(\xi_i, \xi_{j'}) h^2(\xi_i,\xi_{i'})}{\xi_i,\xi_{i'}} } \\
      &=& 0.
  \end{eqnarray*}
  If $j = i'$ or $j'= i$, similarly,
  \begin{eqnarray*}
      \Cov( h(\xi_i,\xi_{j})h(\xi_i, \xi_{j'}), h^2(\xi_i,\xi_{i'}))=0.
  \end{eqnarray*}
  Therefore,
  \begin{eqnarray}\label{meh2-b}
      H_{21 } =0.
  \end{eqnarray}
  For $H_{22}$, 
the covariance is not zero
only if $\left\{ j,j' \right\} = \left\{ i',l \right\}$.  Therefore,
\begin{eqnarray}\label{meh2-c}
      H_{22} &=&  \sum_{l \in A_i \cap A_{i'}} \Cov (h(\xi_i, \xi_{i'})h(\xi_i,\xi_l), h^2(\xi_{i'},\xi_l) )\nonumber\\
      &=&  \sum_{l \in A_i \cap A_{i'}}  \ep{  \conep{ h(\xi_i, \xi_{i'})h(\xi_i,\xi_l), h^2(\xi_{i'},\xi_l)  }{\xi_{i'},\xi_l} }\nonumber\\
      &=& \frac{1}{c_n}  \sum_{l \in A_i \cap A_{i'}} \ep{h^3 (\xi_{i'},\xi_l) } \nonumber\\
      & \leq &  C (d_i \wedge d_{i'})/c_n^2.
  \end{eqnarray}
  Similarly, $H_{22} \geq - C (d_i \wedge d_{i'})/c_n^2 $.
  By \eqref{meh2-a}--\eqref{meh2-c},
  \begin{eqnarray}
      \label{me2-h2}
      |H_2| \leq  C (d_i \wedge d_{i'}) /c_n^2.
  \end{eqnarray}
  Similarly,
  \begin{eqnarray}
      \label{me2-h3}
      |H_3| \leq  C (d_i \wedge d_{i'}) /c_n^2.
  \end{eqnarray}
  For $H_4$, we have
  \begin{eqnarray*}
      H_4 &=& 2 \delta_{ii'} \sum_{j \in A_i \setminus \left\{ i' \right\} } \sum_{ l \neq l' \in A_{i'}  } \Cov ( h(\xi_i,\xi_{i'})h(\xi_i,\xi_j ) , h(\xi_{i'} ,\xi_l) h(\xi_{i'},\xi_{l'})  )\nonumber\\
      && + \sum_{ j\neq j' \in A_i \setminus \left\{ i' \right\} } \sum_{l \neq l' \in A_{i'} } \Cov( h(\xi_i, \xi_{j})h(\xi_i,\xi_{j'}), h(\xi_{i'},\xi_l) h(\xi_{i'},\xi_{l'}) )\nonumber\\
      &:=& H_{41} + H_{42}.
  \end{eqnarray*}
  For $H_{41}$,  the covariance is not zero only if $\left\{ l,l' \right\} = \left\{ i,j \right\}$. 
  Thus,
  \begin{eqnarray*}
      |H_{41}| &=& 4 \delta_{ii'} \Bigl| \sum_{j \in A_i \cap A_{i'}} \Cov( h(\xi_i,\xi_{i'})h(\xi_i,\xi_j), h(\xi_{i'},\xi_i)h(\xi_{i'},\xi_j) ) \Bigr| \nonumber\\
      & \leq &  C \delta_{ii'} (d_i \wedge d_{i'}) / c_n^2.
  \end{eqnarray*}
  For $H_{42}$, the covariance is not zero only if $\left\{ j,j' \right\} = \left\{ l.l' \right\} $.
  \begin{eqnarray*}
      H_{42} &=&  2 \sum_{j \neq j' \in A_i \cap A_{i'}} \Cov(   h(\xi_i, \xi_{j})h(\xi_i,\xi_{j'}), h(\xi_{i'},\xi_j) h(\xi_{i'},\xi_{j'}) )\nonumber\\
      &=&  2 \sum_{j \neq j' \in A_i \cap A_{i'}}   \Cov\left(  \conep{ h(\xi_i, \xi_{j})h(\xi_i,\xi_{j'})}{\xi_j,\xi_{j'}} , \conep{ h(\xi_{i'},\xi_j) h(\xi_{i'},\xi_{j'})}{\xi_j,\xi_{j'}}  \right)\nonumber\\
      &=&  \frac{2}{c_n^2} \sum_{j \neq j' \in A_i \cap A_{i'}}  \Var( h(\xi_{j},\xi_{j'})  )\nonumber\\
      & \leq &  C (d_i \wedge d_{i'})^2 / c_n^3.
  \end{eqnarray*}

  Therefore,
  \begin{eqnarray}\label{me2-h4}
      | H_4|  \leq C \delta_{ii'} (d_i\wedge d_{i'}) /c_n^2 + C (d_i\wedge d_{i'})^2 / c_n^3.
  \end{eqnarray}
  Combining \eqref{me2-3}, \eqref{me2-h1}, \eqref{me2-h2}, \eqref{me2-h3} and \eqref{me2-h4} we have
  \begin{eqnarray}\label{me2-4}
     \lefteqn{\nonumber  \Cov\Big( \Big(\sum_{j\in A_i} h(\xi_i,\xi_j)
    \Big)^2,
\Big(\sum_{l\in A_{i'}} h(\xi_{i'},\xi_l)  \Big)^2  \Big)}\\ &  \leq &  C \Big( \delta_{ij} / c_n + (d_i \wedge d_{i'})/c_n^2 + (d_i\wedge d_{i'})^2 / c_n^3\Big) .
  \end{eqnarray}
  By \eqref{me2-2}, \eqref{me2-5} and \eqref{me2-4}, we have
  \begin{eqnarray*}
      \lefteqn{  \Var \Big( \sum_{i=1}^n \Big( \sum_{j \in A_i} h(\xi_i,\xi_j) \Big) ^2 \Big)   }\\
      & \leq &  C( d_n^* m_n/c_n^2 + m_n / c_n + m_n^2 / c_n^3).
  \end{eqnarray*}

  The law of total variance yields
  \begin{eqnarray*}
    \Var\Bigg( \sum_{i=1}^n \conepb{ \Big(\sum_{j\in A_i}
                h(\xi_{i}', \xi_j)
    \Big)^2}{\X}    \Bigg)\leq
    C\Big(  {d^*_n m_n\over c_n^2} + {m_n\over c_n} + \frac{m_n^2 }{ c_n^3} \Big),
  \end{eqnarray*}
  and thus,
  \begin{eqnarray*}
    \lefteqn{ \Var\Big({1\over 2\lambda}\conep{(W-W')^2}{\X}\Big)  }\\
    & \leq & {C\over \sigma^4 } \Big(  {d^*_n m_n\over c_n^2} + {m_n\over c_n} + \frac{m_n^2}{c_n^3}
    \Big)\\
    & \leq & C (d^*_n/m_n + c_n/m_n + 1/c_n).
  \end{eqnarray*}
  This completes the proof of \eqref{me2}.
\end{proof}
\begin{proof}
  [Proof of \eqref{me3}]
  This proof is slightly different from that of \eqref{me2}. Observe
  that
  \begin{eqnarray*}
    \lefteqn{  \conep{(W-W')|W-W'|}{\X}  }\\
    &=& {n\sigma^2 } \sum_{i=1}^n \conepb{ \Big(\sum_{j\in A_i}
    \IN{\xi_i=\xi_j} - \IN{\xi_i'=\xi_j}    \Big)  \Big|\sum_{j\in A_i}
    \IN{\xi_i=\xi_j} - \IN{\xi_i'=\xi_j}    \Big| }{\X}.
  \end{eqnarray*}
  The variance of the preceding summation can be expanded to
  \begin{eqnarray*}
      \Var\Big(\sum_{i=1}^n  M_i  \Big)
      = \sum_{i=1}^n \Var(M_i) + \sum_{i\neq i'} \Cov(M_i, M_{i'}),
  \end{eqnarray*}
  where 
  \begin{eqnarray*}
    M_i =   \conepb{ \Big(\sum_{j\in A_i}
    \IN{\xi_i=\xi_j} - \IN{\xi_i'=\xi_j}    \Big)  \Big|\sum_{j\in A_i}
    \IN{\xi_i=\xi_j} - \IN{\xi_i'=\xi_j}    \Big| }{\X}.
  \end{eqnarray*}
  Noting that $\ep{M_i}=0,$ we have
  \begin{eqnarray*}
    \lefteqn{  \Var(M_i) = \ep{M_i^2}  }\\
    &\leq & \epb{ \Big(\sum_{j\in A_i} \IN{\xi_i=\xi_j} -
    \IN{\xi_i'=\xi_j}\Big)^4
    }\\
    & \leq & C d_i\left(
    {1\over c_n}\left( 1-{1\over c_n} \right)
    \right) \left( 2d_i\left( {1\over c_n}-{1\over c_n^2} \right)+1 \right).
  \end{eqnarray*}

  To calculate the covariance term, for each $i\neq j $, let $\eta_{ij} = \IN{\xi_i=\xi_j} - \IN{\xi_i' = \xi_j}$, 
  \begin{equation*}
  \begin{array}{ccc}
      T_i = \sum_{j\in A_i } \eta_{ij}, & \text{and }&
    T_i^{(i')} = \sum_{j \in A_i\setminus
    \{i'\}}\eta_{ij}.
  \end{array}
\end{equation*}
Then, $M_i = \conep{ T_i|T_i|  }{\X}$.

Observe that for $i\neq i'$ and given that $\X$, $T_i|T_i|$ is a function of $\xi_i'$ and $T_{i'}|T_{i'}|$ is a function of $\xi_{i'}^{'}$; thus,
$\Cov(T_i|T_i|,T_{i'}|T_{i'}||\X) =0$. By the total covariance formula, we
have $\Cov(M_i,M_{i'})=\Cov(T_i|T_i|,
T_{i'}|T_{i'}|)$.
As $\xi_i$ and $\xi_i'$ are independent and identically distributed, 
$T_i|T_i|$ and $-T_i|T_i|$ are also identically distributed. Therefore, $\ep{T_i|T_i|}=0$, and for some constant $C$, we have
\begin{eqnarray*}
    \lefteqn{ \Cov(M_i, M_{i'})}\\
    &=&   \ep{T_i|T_i| T_{i'}|T_{i'}|}  \\
  &=& \ep{T_{i}^{({i'})}|T_i^{({i'})}| T_{i'}^{(i)}|T_{i'}^{(i)}|} + \ep{
  T_{i}^{({i'})}|T_i^{({i'})}| (T_{i'}|T_{i'}| -\delta_{ii'}T_{i'}^{(i)}|T_{i'}^{(i)}|)  }\\
  && + \ep{(T_i|T_i|-\delta_{ii'}T_{i}^{({i'})}|T_i^{({i'})}|) T_{i'}^{(i)}|T_{i'}^{(i)}|} \\ && + \ep{
  (T_i|T_i|-\delta_{ii'}T_{i}^{({i'})}|T_i^{({i'})}|) (T_{i'}|T_{i'}| -\delta_{ii'}T_{i'}^{(i)}|T_{i'}^{(i)}|)  }.
\end{eqnarray*}
Define $\F_{i} = \sigma \{ \xi_j, j \neq i\}$. Given  $\F_{i}$, $T_i|T_i|$ and $T_{i'}^{(i)}| T_{i'}^{(i)} |$ are conditionally independent, 
\begin{eqnarray*}
  \epb{ T_i|T_i| T_{i'}^{(i)}| T_{i'}^{(i)} | } = \epb{ T_{i'}^{(i)}| T_{i'}^{(i)} | \conepb{ T_i|T_i|   }{\F_i} } = 0.
\end{eqnarray*}
Similarly,
\begin{eqnarray*}
    \epb{ T_{i'}|T_{i'}| T_{i}^{(i')}| T_{i}^{(i')} | }=0,
\end{eqnarray*}
and
\begin{eqnarray*}
  \ep{T_{i}^{({i'})}|T_i^{({i'})}| T_{i'}^{(i)}|T_{i'}^{(i)}|}=0.
\end{eqnarray*}
Thus,
\begin{eqnarray}
    \lefteqn{  \ep{T_i|T_i| T_{i'}|T_{i'}|}  }\nonumber\\
  &=&  \ep{
  (T_i|T_i|-\delta_{ii'}T_{i}^{({i'})}|T_i^{({i'})}|) (T_{i'}|T_{i'}| -\delta_{ii'}T_{i'}^{(i)}|T_{i'}^{(i)}|)  } .  \label{ine-me1}
\end{eqnarray}
Without loss of generality, we assume that $\delta_{ii'} =1$. Note that
\begin{eqnarray*}
  \lefteqn{\big| T_i|T_i| - T_i^{(i')}|T_i^{(i')} |  \big|}\\
  & = & \big|(T_i - T_i^{(i')}) |T_i| + T_i^{(i')} (|T_i| - |T_i^{(i)}|)\big|\\
  & \leq & 2 |\eta_{ii'}T_i^{(i')}| + |\eta_{ii'}^2|,
\end{eqnarray*}
and
thus,
\begin{eqnarray*}
   \lefteqn{\E\big( T_i|T_i| - T_i^{(i')}|T_i^{(i')} |  \big)^2 }\\
   & \leq & C \epb{ \eta_{ii'}^2 \big( T_i^{(i)} \big)^2  } + C \ep{\eta_{ii'}^4}\\
   & = & C \Big( \sum_{j \in A_i\setminus\{i'\}} \ep{  \eta_{ii'}^2\eta_{ij}^2 } +  \sum_{j \neq l \in A_i\setminus\{i'\}} \ep{  \eta_{ii'}^2\eta_{ij}\eta_{il} }  + \ep{\eta_{ii'}^4}\Big)\\
   & \leq  & C d_i /c_n^2 + C / c_n.
\end{eqnarray*}
Similarly,
\begin{eqnarray*}
  \E(T_{i'}|T_{i'}| -\delta_{ii'}T_{i'}^{(i)}|T_{i'}^{(i)}|)^2 \leq C d_{i'}/c_n^2 + C/c_n.
\end{eqnarray*}
By  \eqref{ine-me1} and the  Cauchy inequality, we finally have
\begin{eqnarray*}
    |  \ep{T_i|T_i| T_{i'}|T_{i'}|}| \leq   C \sqrt{d_i  d_{i'}} /c_n^2 + C / c_n.
\end{eqnarray*}

Similar to the proof of \eqref{me2}, we obtain the bound \eqref{me3}.
\end{proof}

{\bf Acknowledgements.} We thank one referee, an Associate Editor and the Editor for their helpful suggestions which led to a much improved presentation of the paper.


\end{document}